\documentclass{article}
\usepackage[T1]{fontenc}
\usepackage{lmodern}
 \usepackage{float}
\usepackage{amsmath,amssymb}
\usepackage{amssymb}
\usepackage{amsmath,amscd}
\usepackage{amsfonts}
\usepackage[utf8]{inputenc}
\usepackage{pstricks}
\usepackage{pstricks-add}
\usepackage{tikz}
\usepackage{url}
\usepackage{tensor}
\def\N{{\mathbb N}}

\usepackage{graphicx}
\usetikzlibrary{arrows}
\usepackage{MnSymbol}
\usepackage{stmaryrd}
\usepackage{shuffle}
\setcounter{MaxMatrixCols}{10}

\renewcommand{\S}{\ensuremath{\mathfrak{S}}}

\newcommand{\B}{\ensuremath{\mathcal{B}}}

\def\WSym{\mathrm{WSym}}
\def\BWSym{\mathrm{BWSym}}
\def\std{\mathrm{std}}

\newtheorem{theorem}{Theorem}

\newtheorem{claim}[theorem]{Claim}

\newtheorem{corollary}[theorem]{Corollary}

\newtheorem{definition}[theorem]{Definition}
\newtheorem{example}[theorem]{Example}

\newtheorem{lemma}[theorem]{Lemma}

\newtheorem{proposition}[theorem]{Proposition}
\newtheorem{remark}[theorem]{Remark}

\newenvironment{proof}[1][Proof]{\noindent\textbf{#1.} }{\ \rule{0.5em}{0.5em}}
\pagestyle{myheadings} \markboth{ } {}
\parskip 2mm

\include {mak}
\parindent 0.5cc
\oddsidemargin -.5cm
\evensidemargin -.5cm
\topmargin -1 cm
\textwidth 15.50cm
\textheight 23cm
\baselineskip=35pt

\PassOptionsToPackage{dvipsnames}{xcolor}
\RequirePackage{xcolor} 
\definecolor{halfgray}{gray}{0.55} 
\definecolor{webgreen}{rgb}{0,.5,0}
\definecolor{webbrown}{rgb}{.6,0,0}
\definecolor{Noir}{cmyk}{0, 0, 0, 0}
\definecolor{Bleu}{cmyk}{100, 72, 0, 18}
\definecolor{Rouge}{cmyk}{0, 100, 80, 2}

\tikzstyle{Arete}=[Rouge!80,cap=round,line width=3pt]
\tikzstyle{Feuille}=[rectangle,draw=Noir!70,fill=Noir!20,minimum size=3mm,line width=2pt]
\tikzstyle{Operateur} = [rectangle,rounded corners,draw=Bleu!100,fill=Bleu!10,
minimum size=15mm,line width=3pt,font=\Huge]

\newcounter{Edge}
\newcounter{Vertex}

\def\bugdxuu#1#2#3
{
\draw (#1,#2) rectangle (#1+0.5,#2+0.5);
\node[](m#3) at (#1+0.25,#2+0.25) {\arabic{Vertex}\stepcounter{Vertex}};
\node[] (dd1#3) at (#1+0.1,#2+0.1){};
\node[] (d1#3) at (#1+0.1,#2-0.3){\tiny\arabic{Edge}};

\node[] (uu1#3) at (#1+0.1,#2+0.5){};
\node[] (u1#3) at (#1+0.1,#2+0.7){\tiny\arabic{Edge}\stepcounter{Edge}};

\node[] (uu2#3) at (#1+0.4,#2+0.5){};
\node[] (u2#3) at (#1+0.4,#2+0.7){\tiny\arabic{Edge}};

\node[] (dd2#3) at (#1+0.4,#2+0.1){};
\node[] (d2#3) at (#1+0.4,#2-0.3){\tiny $\times\atop \arabic{Edge}$\stepcounter{Edge}};

\draw (dd1#3) -- (d1#3);
\draw (uu1#3) -- (u1#3);
\draw (uu2#3) -- (u2#3);
}

\def\bugdduu#1#2#3
{
\draw (#1,#2) rectangle (#1+0.5,#2+0.5);
\node[](m#3) at (#1+0.25,#2+0.25) {\arabic{Vertex}\stepcounter{Vertex}};
\node[] (dd1#3) at (#1+0.1,#2+0.1){};
\node[] (d1#3) at (#1+0.1,#2-0.3){\tiny\arabic{Edge}};

\node[] (uu1#3) at (#1+0.1,#2+0.5){};
\node[] (u1#3) at (#1+0.1,#2+0.7){\tiny\arabic{Edge}\stepcounter{Edge}};

\node[] (uu2#3) at (#1+0.4,#2+0.5){};
\node[] (u2#3) at (#1+0.4,#2+0.7){\tiny\arabic{Edge}};

\node[] (dd2#3) at (#1+0.4,#2+0.1){};
\node[] (d2#3) at (#1+0.4,#2-0.3){\tiny\arabic{Edge}\stepcounter{Edge}};

\draw (dd1#3) -- (d1#3);
\draw (uu1#3) -- (u1#3);
\draw (dd2#3) -- (d2#3);
\draw (uu2#3) -- (u2#3);
}

\def\bugddux#1#2#3
{
\draw (#1,#2) rectangle (#1+0.5,#2+0.5);
\node[](m#3) at (#1+0.25,#2+0.25) {\arabic{Vertex}\stepcounter{Vertex}};
\node[] (dd1#3) at (#1+0.1,#2+0.1){};
\node[] (d1#3) at (#1+0.1,#2-0.3){\tiny\arabic{Edge}};

\node[] (uu1#3) at (#1+0.1,#2+0.5){};
\node[] (u1#3) at (#1+0.1,#2+0.7){\tiny\arabic{Edge}\stepcounter{Edge}};

\node[] (uu2#3) at (#1+0.4,#2+0.5){};
\node[] (u2#3) at (#1+0.4,#2+0.7){\tiny$\times\atop \arabic{Edge}$\stepcounter{Edge}};

\node[] (dd2#3) at (#1+0.4,#2+0.1){};
\node[] (d2#3) at (#1+0.4,#2-0.3){\tiny\arabic{Edge}};

\draw (dd1#3) -- (d1#3);
\draw (uu1#3) -- (u1#3);
\draw (dd2#3) -- (d2#3);
}

\def\bugdxuu#1#2#3
{
\draw (#1,#2) rectangle (#1+0.5,#2+0.5);
\node[](m#3) at (#1+0.25,#2+0.25) {\arabic{Vertex}\stepcounter{Vertex}};
\node[] (dd1#3) at (#1+0.1,#2+0.1){};
\node[] (d1#3) at (#1+0.1,#2-0.3){\tiny\arabic{Edge}};

\node[] (uu1#3) at (#1+0.1,#2+0.5){};
\node[] (u1#3) at (#1+0.1,#2+0.7){\tiny\arabic{Edge}\stepcounter{Edge}};

\node[] (uu2#3) at (#1+0.4,#2+0.5){};
\node[] (u2#3) at (#1+0.4,#2+0.7){\tiny\arabic{Edge}};

\node[] (dd2#3) at (#1+0.4,#2+0.1){};
\node[] (d2#3) at (#1+0.4,#2-0.3){\tiny$\times\atop \arabic{Edge}$\stepcounter{Edge}};

\draw (dd1#3) -- (d1#3);
\draw (uu1#3) -- (u1#3);
\draw (uu2#3) -- (u2#3);
}

\def\bugdu#1#2#3
{
\draw (#1,#2) rectangle (#1+0.4,#2+0.5);
\node[](m#3) at (#1+0.2,#2+0.25) {\arabic{Vertex}\stepcounter{Vertex}};
\node[] (dd1#3) at (#1+0.2,#2+0.1){};
\node[] (d1#3) at (#1+0.2,#2-0.3){\tiny\arabic{Edge}};

\node[] (uu1#3) at (#1+0.2,#2+0.5){};
\node[] (u1#3) at (#1+0.2,#2+0.7){\tiny\arabic{Edge}\stepcounter{Edge}};

\draw (dd1#3) -- (d1#3);
\draw (uu1#3) -- (u1#3);
}

\def\bugddux#1#2#3
{
\draw (#1,#2) rectangle (#1+0.5,#2+0.5);
\node[](m#3) at (#1+0.25,#2+0.25) {\arabic{Vertex}\stepcounter{Vertex}};
\node[] (dd1#3) at (#1+0.1,#2+0.1){};
\node[] (d1#3) at (#1+0.1,#2-0.3){\tiny\arabic{Edge}};

\node[] (uu1#3) at (#1+0.1,#2+0.5){};
\node[] (u1#3) at (#1+0.1,#2+0.7){\tiny\arabic{Edge}\stepcounter{Edge}};

\node[] (uu2#3) at (#1+0.4,#2+0.5){};
\node[] (u2#3) at (#1+0.4,#2+0.7){\tiny$\arabic{Edge}\atop \times$};

\node[] (dd2#3) at (#1+0.4,#2+0.1){};
\node[] (d2#3) at (#1+0.4,#2-0.3){\tiny\arabic{Edge}\stepcounter{Edge}};

\draw (dd1#3) -- (d1#3);
\draw (uu1#3) -- (u1#3);
\draw (dd2#3) -- (d2#3);

}

\def\bugxdxu#1#2#3
{
\draw (#1,#2) rectangle (#1+0.5,#2+0.5);
\node[](m#3) at (#1+0.25,#2+0.25) {\arabic{Vertex}\stepcounter{Vertex}};
\node[] (dd1#3) at (#1+0.1,#2+0.1){};
\node[] (d1#3) at (#1+0.1,#2-0.3){\tiny$\times\atop \arabic{Edge}$};

\node[] (uu1#3) at (#1+0.1,#2+0.5){};
\node[] (u1#3) at (#1+0.1,#2+0.7){\tiny$\arabic{Edge}\atop\times$\stepcounter{Edge}};

\node[] (uu2#3) at (#1+0.4,#2+0.5){};
\node[] (u2#3) at (#1+0.4,#2+0.7){\tiny\arabic{Edge}};

\node[] (dd2#3) at (#1+0.4,#2+0.1){};
\node[] (d2#3) at (#1+0.4,#2-0.3){\tiny\arabic{Edge}\stepcounter{Edge}};

\draw (dd2#3) -- (d2#3);
\draw (uu2#3) -- (u2#3);

}

\def\bugdxxx#1#2#3
{
\draw (#1,#2) rectangle (#1+0.5,#2+0.5);
\node[](m#3) at (#1+0.25,#2+0.25) {\arabic{Vertex}\stepcounter{Vertex}};
\node[] (dd1#3) at (#1+0.1,#2+0.1){};
\node[] (d1#3) at (#1+0.1,#2-0.3){\tiny\arabic{Edge}};

\node[] (uu1#3) at (#1+0.1,#2+0.5){};
\node[] (u1#3) at (#1+0.1,#2+0.7){\tiny$\arabic{Edge}\atop\times$\stepcounter{Edge}};

\node[] (uu2#3) at (#1+0.4,#2+0.5){};
\node[] (u2#3) at (#1+0.4,#2+0.7){\tiny$\arabic{Edge}\atop\times$};

\node[] (dd2#3) at (#1+0.4,#2+0.1){};
\node[] (d2#3) at (#1+0.4,#2-0.3){\tiny$\times\atop\arabic{Edge}$\stepcounter{Edge}};

\draw (dd1#3) -- (d1#3);
}

\def\bugddxuux#1#2#3
{
\draw (#1,#2) rectangle (#1+0.7,#2+0.5);
\node[](m#3) at (#1+0.35,#2+0.25) {\arabic{Vertex}\stepcounter{Vertex}};
\node[] (dd1#3) at (#1+0.1,#2+0.1){};
\node[] (d1#3) at (#1+0.1,#2-0.3){\tiny\arabic{Edge}};

\node[] (uu1#3) at (#1+0.1,#2+0.5){};
\node[] (u1#3) at (#1+0.1,#2+0.7){\tiny\arabic{Edge}\stepcounter{Edge}};

\node[] (dd2#3) at (#1+0.35,#2+0.1){};
\node[] (d2#3) at (#1+0.35,#2-0.3){\tiny\arabic{Edge}};

\node[] (uu2#3) at (#1+0.35,#2+0.5){};
\node[] (u2#3) at (#1+0.35,#2+0.7){\tiny\arabic{Edge}\stepcounter{Edge}};

\node[] (uu3#3) at (#1+0.6,#2+0.5){};
\node[] (u3#3) at (#1+0.6,#2+0.7){\tiny$\arabic{Edge}\atop \times$};

\node[] (dd3#3) at (#1+0.6,#2+0.1){};
\node[] (d3#3) at (#1+0.6,#2-0.3){\tiny$\times\atop\arabic{Edge}$\stepcounter{Edge}};

\draw (dd1#3) -- (d1#3);
\draw (uu1#3) -- (u1#3);
\draw (dd2#3) -- (d2#3);
\draw (uu2#3) -- (u2#3);

}

\def\bugddduux#1#2#3
{
\draw (#1,#2) rectangle (#1+0.7,#2+0.5);
\node[](m#3) at (#1+0.35,#2+0.25) {\arabic{Vertex}\stepcounter{Vertex}};
\node[] (dd1#3) at (#1+0.1,#2+0.1){};
\node[] (d1#3) at (#1+0.1,#2-0.3){\tiny\arabic{Edge}};

\node[] (uu1#3) at (#1+0.1,#2+0.5){};
\node[] (u1#3) at (#1+0.1,#2+0.7){\tiny\arabic{Edge}\stepcounter{Edge}};

\node[] (dd2#3) at (#1+0.35,#2+0.1){};
\node[] (d2#3) at (#1+0.35,#2-0.3){\tiny\arabic{Edge}};

\node[] (uu2#3) at (#1+0.35,#2+0.5){};
\node[] (u2#3) at (#1+0.35,#2+0.7){\tiny\arabic{Edge}\stepcounter{Edge}};

\node[] (uu3#3) at (#1+0.6,#2+0.5){};
\node[] (u3#3) at (#1+0.6,#2+0.7){\tiny$\arabic{Edge}\atop \times$};

\node[] (dd3#3) at (#1+0.6,#2+0.1){};
\node[] (d3#3) at (#1+0.6,#2-0.3){\tiny\arabic{Edge}\stepcounter{Edge}};

\draw (dd1#3) -- (d1#3);
\draw (uu1#3) -- (u1#3);
\draw (dd2#3) -- (d2#3);
\draw (dd3#3) -- (d3#3);
\draw (uu2#3) -- (u2#3);

}

\def\bugddduuu#1#2#3
{
\draw (#1,#2) rectangle (#1+0.7,#2+0.5);
\node[](m#3) at (#1+0.35,#2+0.25) {\arabic{Vertex}\stepcounter{Vertex}};
\node[] (dd1#3) at (#1+0.1,#2+0.1){};
\node[] (d1#3) at (#1+0.1,#2-0.3){\tiny\arabic{Edge}};

\node[] (uu1#3) at (#1+0.1,#2+0.5){};
\node[] (u1#3) at (#1+0.1,#2+0.7){\tiny\arabic{Edge}\stepcounter{Edge}};

\node[] (dd2#3) at (#1+0.35,#2+0.1){};
\node[] (d2#3) at (#1+0.35,#2-0.3){\tiny\arabic{Edge}};

\node[] (uu2#3) at (#1+0.35,#2+0.5){};
\node[] (u2#3) at (#1+0.35,#2+0.7){\tiny\arabic{Edge}\stepcounter{Edge}};

\node[] (uu3#3) at (#1+0.6,#2+0.5){};
\node[] (u3#3) at (#1+0.6,#2+0.7){\tiny\arabic{Edge}};

\node[] (dd3#3) at (#1+0.6,#2+0.1){};
\node[] (d3#3) at (#1+0.6,#2-0.3){\tiny\arabic{Edge}\stepcounter{Edge}};

\draw (dd1#3) -- (d1#3);
\draw (uu1#3) -- (u1#3);
\draw (dd2#3) -- (d2#3);
\draw (dd3#3) -- (d3#3);
\draw (uu2#3) -- (u2#3);
\draw (uu3#3) -- (u3#3);

}

\def\bugxdxxuu #1#2#3
{
\draw (#1,#2) rectangle (#1+0.7,#2+0.5);
\node[](m#3) at (#1+0.35,#2+0.25) {\arabic{Vertex}\stepcounter{Vertex}};
\node[] (dd1#3) at (#1+0.1,#2+0.1){};
\node[] (d1#3) at (#1+0.1,#2-0.3){\tiny$\times\atop\arabic{Edge}$};

\node[] (uu1#3) at (#1+0.1,#2+0.5){};
\node[] (u1#3) at (#1+0.1,#2+0.7){\tiny$\arabic{Edge}\atop \times$\stepcounter{Edge}};

\node[] (dd2#3) at (#1+0.35,#2+0.1){};
\node[] (d2#3) at (#1+0.35,#2-0.3){\tiny\arabic{Edge}};

\node[] (uu2#3) at (#1+0.35,#2+0.5){};
\node[] (u2#3) at (#1+0.35,#2+0.7){\tiny\arabic{Edge}\stepcounter{Edge}};

\node[] (uu3#3) at (#1+0.6,#2+0.5){};
\node[] (u3#3) at (#1+0.6,#2+0.7){\tiny\arabic{Edge}};

\node[] (dd3#3) at (#1+0.6,#2+0.1){};
\node[] (d3#3) at (#1+0.6,#2-0.3){\tiny$\times\atop\arabic{Edge}$\stepcounter{Edge}};

\draw (dd2#3) -- (d2#3);
\draw (uu2#3) -- (u2#3);
\draw (uu3#3) -- (u3#3);

}

\def\bugxx#1#2#3
{
\draw (#1,#2) rectangle (#1+0.4,#2+0.5);
\node[](m#3) at (#1+0.2,#2+0.25) {\arabic{Vertex}\stepcounter{Vertex}};
\node[] (dd1#3) at (#1+0.2,#2+0.1){};
\node[] (d1#3) at (#1+0.2,#2-0.3){\tiny$\times\atop\arabic{Edge}$};

\node[] (uu1#3) at (#1+0.2,#2+0.5){};
\node[] (u1#3) at (#1+0.2,#2+0.7){\tiny$\arabic{Edge}\atop\times$\stepcounter{Edge}};

}

\def\bugxu#1#2#3
{
\draw (#1,#2) rectangle (#1+0.4,#2+0.5);
\node[](m#3) at (#1+0.2,#2+0.25) {\arabic{Vertex}\stepcounter{Vertex}};
\node[] (dd1#3) at (#1+0.2,#2+0.1){};
\node[] (d1#3) at (#1+0.2,#2-0.3){\tiny$\times\atop\arabic{Edge}$};

\node[] (uu1#3) at (#1+0.2,#2+0.5){};
\node[] (u1#3) at (#1+0.2,#2+0.7){\tiny\arabic{Edge}\stepcounter{Edge}};

\draw (uu1#3) -- (u1#3);
}

\def\bugdx#1#2#3
{
\draw (#1,#2) rectangle (#1+0.4,#2+0.5);
\node[](m#3) at (#1+0.2,#2+0.25) {\arabic{Vertex}\stepcounter{Vertex}};
\node[] (dd1#3) at (#1+0.2,#2+0.1){};
\node[] (d1#3) at (#1+0.2,#2-0.3){\tiny\arabic{Edge}};

\node[] (uu1#3) at (#1+0.2,#2+0.5){};
\node[] (u1#3) at (#1+0.2,#2+0.7){\tiny$\arabic{Edge}\atop\times$\stepcounter{Edge}};

\draw (dd1#3) -- (d1#3);
}

\def\mcomp#1#2{{\displaystyle\mathop\medstar^{#2}_{#1}}}
\def\comp#1#2{{\displaystyle\mathop\star^{#2}_{#1}}}
\def\aa{{\mathtt a}}
\def\raa{{\color{red}\mathtt a}}
\def\baa{{\color{blue}\mathtt a}}

\def\ee{\mathtt{e}}

\title{A combinatorial Hopf algebra for the boson normal ordering problem}
\author{Imad Eddine Bousbaa, Ali Chouria, and Jean-Gabriel Luque}
\begin{document}
\maketitle

\begin{abstract}
In the aim to understand the generalization of Stirling numbers occurring in the bosonic normal ordering problem, several combinatorial models have been proposed. In particular, Blasiak \emph{et al.} defined combinatorial objects allowing to interpret the number of $S_{\bf{r,s}}(k)$ appearing in the identity $(a^\dag)^{r_n}a^{s_n}\cdots(a^\dag)^{r_1}a^{s_1}=(a^\dag)^\alpha\displaystyle\sum S_{\bf{r,s}}(k)(a^\dag)^k a^k$, where $\alpha$ is assumed to be non-negative. These objects are used to define a combinatorial Hopf algebra which specializes to the enveloping algebra of the  Heisenberg Lie algebra. Here, we propose a new variant of this construction which admits a realization with variables. This means that we construct our algebra from a free algebra $\mathbb{C}\langle A \rangle$ using quotient and shifted product. The combinatorial objects (B-diagrams) are slightly different from those proposed by Blasiak \emph{et al.}, but give also a combinatorial interpretation of the generalized Stirling numbers together with a combinatorial Hopf algebra related to Heisenberg Lie algebra. The main difference comes from the fact that the B-diagrams have the same number of inputs and outputs. After studying the combinatorics and the enumeration of B-diagrams, we propose two constructions of algebras called Fusion algebra $\mathcal{F}$, defined using formal variable and another algebra $\mathcal{B}$ constructed directly from the B-diagrams. We show the connection between these two algebras and that $\mathcal{B}$ can be endowed with a Hopf structure. We recognize two already known combinatorial Hopf subalgebras of $\mathcal{B}$ : $\WSym$ the algebra of word symmetric functions indexed by set partitions and $\BWSym$ the algebra of biword symmetric functions indexed by set partitions into lists.
\end{abstract}

\section{Introduction}
In Quantum Field Theory, the concept of field allows the creation
and the annihilation of particles in any point of the space.
Like any quantum systems, a quantum field has an Hamiltonian
$H$ and the associated Hilbert space $\mathcal H$ is generated by the
eigenvectors of $H$. In the bra-cket notation, the Hilbert space is generated by the
vectors $|n\rangle$ assumed to be orthogonal ($\langle m| n\rangle=\delta_{n,m}$).
This representation is usually called Fock space; the vector $|n\rangle$ means that there are $n$ particles in the system.
The creation and annihilation operators, denoted respectively by $a^\dag$ and $a$ are non-Hermitian operators acting on the Fock space by
\begin{equation}\label{Fock}
a^\dag|n\rangle=\sqrt{n+1}|n+1\rangle\mbox{ and }a|n\rangle=\sqrt{n}|n-1\rangle.
\end{equation}
These operators generate the Heisenberg  algebra abstractly defined as the free algebra generated by the elements $a$, $a^\dag$ and quotiented by the relation
\begin{equation}\label{Hrel}
[a,a^\dag]=1.
\end{equation}
The normal ordering problem consists in computing $\langle z|F(a^\dag,a)|z\rangle$ where $F(a^\dag,a)$ is an operator of the Heisenberg algebra, $|z\rangle$ is an eigenstate of the annihilation operator $a$ ($a|z\rangle =z|z\rangle)$, and $\langle z|a^\dag=\langle z|z^*$. The strategy consists in sorting the letters on each terms of   $F(a^\dag,a)$ in such a way that all the letters $a^\dag$ are in the left and all the letters $a$  are in the right by using as many times as necessary the relation (\ref{Hrel}).

In a seminal paper, J. Katriel \cite{Kat} pioneered the study of the combinatorial aspects of the normal ordering problem. He established the normal-ordered expression of $(a^\dag a)^n$ in terms of Stirling numbers of second kind enumerating partitions of a set of $n$ elements into $k$ non empty subsets
\begin{equation}
(a^\dag a)^n=\sum_{i=1}^nS(n,i)(a^\dag)^i a^i.
\end{equation}
The investigation of the normal ordered expression of $((a^\dag)^r a^s)^n$ naturally gives rise to generalized Stirling numbers $S_{r,s}(n,k)$ and Bell polynomials \cite{BPS,BPS2}. The interpretations of some special cases are well known and related to combinatorial numbers \cite{CDH,BHPSD0}. For instance, for $r=2$ and $s=1$, the number $S_{2,1}(n,k)=\binom{n-1}{k-1}{n!\over k!}$ is the number partitions of $\{1,\dots,n\}$ into $k$ lists (also called Lah numbers).
More generally, Blasiak \emph{ et al.} \cite{MBP} studied the bosonic normal ordering problem, 
\begin{equation}
(a^\dag)^{r_n}a^{s_n}\cdots(a^\dag)^{r_1}a^{s_1}=(a^\dag)^{\alpha_n}\sum_{k\geq s_1} S_{\bf{r,s}}(k)(a^\dag)^k a^k,
\end{equation}
with $\mathbf{r}=(r_1,\dots,r_n)$, $\mathbf{s}=(s_1,\dots,s_n)$ and $\alpha_n=\sum_{i=1}^n(r_i-s_i)$.

They gave also a combinatorial interpretation of the sequence $S_{\bf{r,s}}(k)$ in terms of graphs-like combinatorial objects called \emph{bugs}. In a further work, Blasiak \emph{et al.} \cite{BPDSHP} constructed a combinatorial Hopf algebras based on bugs to explain the computations. The aim of our paper is to investigate a variant of their construction. Our version allows a realization with variables which projects on the Heisenberg-Weyl algebra and the connections with some other combinatorial Hopf algebras appear clearly. Readers interested by the topic of combinatorial interpretations of the normal ordering should also refer to \cite{PHDBlasiak,MMS,MS,MS15,Mansour}.

Our paper is organized as follow. In section \ref{sectioDiagram}, we introduce the combinatorial structure of $B$-diagrams and we give, in section \ref{algebra}, an algebraic structure based on these objects. Also, we describe a second algebra named \emph{Fusion algebra} which specializes to the Heisenberg-Weyl algebra. We show that the $B$-diagram algebra is isomorphic to a subalgebra of $\mathcal{F}$ which allows us to describe the normal ordering problem in terms of $B$-diagram. We study the Hopf structure of $B$-diagrams in section \ref{Hopf} and we identify, in section \ref{subalgebras}, two already known subalgebras : $\WSym$ and $\BWSym$.
\section{B-diagrams\label{sectioDiagram}}
\subsection{Definition and first examples}
Let us first introduce  notation. Let $\# E$ denote the cardinal of the set $E$, $\llbracket a,b\rrbracket:=\{a,a+1,\dots,b-1,b\}$ for any pairs of integers $a\leq b$, $E=E'\uplus E''$ when $E=E'\cup E''[n]$ and $E'\cap E''[n]=\emptyset$, where $E''[n]$ means that we add $n = \# E'$ to each integer occurring in $E''$. For example, if we set $E=\{1,2,3\}$, thus $E[2]=\{3,4,5\}$. 
\begin{definition}\label{DBDiag}
A \emph{B-diagram} is a $5$-tuple $G=(n,\lambda,E^\uparrow,E^\downarrow,E)$ such that
\begin{enumerate}
\item $n\in\N$,
\item $\lambda=[\lambda_1,\dots,\lambda_n]$ with $\lambda_i\in\N\setminus\{0\}$ for each $i$,
\item $E^\uparrow, E^\downarrow\subset \llbracket 1,\lambda_1+\cdots+\lambda_n\rrbracket$,
\item $E\subset \{(a,b):a\in E^\uparrow, b\in E^\downarrow, v(a)<v(b)\}$ where $v:\llbracket 1,\lambda_1+\cdots+\lambda_n\rrbracket \longrightarrow \llbracket1,n\rrbracket$
is defined by
$v(k)=i$ if $k\in \llbracket \lambda_1+\cdots+\lambda_{i-1}+1,\lambda_1+\cdots+\lambda_i\rrbracket$,
\item for each $a\in E^\uparrow$ and $b\in E^\downarrow$, the sets $\{(a,c):(a,c)\in E\}$ and $\{(c,b):(c,b)\in E\}$ contain at most one element.
\end{enumerate}
\end{definition}

Graphically, a B-diagram can be represented as a graph with $n$ vertices. The vertex $i$ has exactly $\lambda_i$ inner (resp. outer) half-edges labelled
 by $\llbracket \lambda_1+\cdots+\lambda_{i-1}+1,\lambda_1+\cdots+\lambda_i\rrbracket$. The inner (resp. outer) half edges which does not belong to $E^\downarrow$ (resp. $E^\uparrow$) are denoted by $\times$. An element of $E$ is represented by an edge relying an outer half edge $a$ of a vertex $i$ to an inner half edge $b$ of a vertex $j$ with $i<j$.

\begin{example}\rm
For instance, the B-diagram $G=(3,[3,1,2],\llbracket 1,5\rrbracket,\llbracket 1,6\rrbracket,\{(1,6), (2,4), (4,5)\})$
is represented in Figure \ref{bdiag1}
\begin{figure}[h]
\begin{center}
\begin{tikzpicture}
\setcounter{Edge}{1}
\setcounter{Vertex}{1}
\bugddduuu 00{b1}
\bugdu 1{1.5}{b2}
\bugddux 03{b3}
\draw (d1b2) edge[in=90,out=270]  (u2b1);
\draw (d1b3) edge[in=90,out=270] (u1b2);
\draw (d2b3) edge[in=90,out=270,] (u1b1);
\end{tikzpicture}
\end{center}
\caption{The B-diagram $(3,[3,1,2],\llbracket 1,5\rrbracket,\llbracket 1,6\rrbracket,\{(1,6), (2,4), (4,5)\})$\label{bdiag1}}
\end{figure}

Also consider   $G'=(4,[1,3,2,2],\{1,3,4,6\}, \{1,3,6,7\},\{(1,6),(3,7)\})$ which is represented in Figure \ref{bdiag2}.
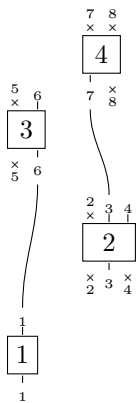
\begin{figure}[H]
\begin{center}
\begin{tikzpicture}
\setcounter{Edge}{1}
\setcounter{Vertex}{1}
\bugdu 00{b1}
\bugxdxxuu 1{1.5}{b2}
\bugxdxu 03{b3}
\bugdxxx 1{4}{b4}
\draw (d2b3) edge[in=90,out=270]  (u1b1);
\draw (d1b4) edge[in=90,out=270] (u2b2);
\end{tikzpicture}
\end{center}
\caption{The B-diagram $(4,[1,3,2,2],\{1,3,4,6\}, \{1,3,6,7\},\{(1,6),(3,7)\})$ \label{bdiag2}}
\end{figure}
\end{example}

Let $G=(n,\lambda,E^\uparrow,E^\downarrow,E)$, we define  some tools in order to manipulate more easily the B-diagrams
\begin{enumerate}
\item the number of vertices is $|G|:=n$,
\item we write $\omega(G):=\lambda_1+\cdots+\lambda_n$, for the number of half-edge,
\item Let $\tau(G):=\#E$, be the number of edges,
\item the number of non used outer (resp. inner) half edges is $h^{\uparrow}(G):=\omega(G)-\# e^{\uparrow}(E)$ (resp.
 $h^{\downarrow}(G):=\omega(G)-\# e^{\downarrow}(E)$) where $e^{\uparrow}(a,b)=a$ (resp. $e^{\downarrow}(a,b)=b$),
\item the set of non used outer (resp. inner) non cut half edges is $H_f^\uparrow(G):=E^\uparrow \setminus e^\uparrow(E)$
(resp. $H_f^\downarrow(G):=E^\downarrow \setminus e^\downarrow(E)$),
\item the number of non used outer (resp. inner) non cut half edges is $h_f^\uparrow(G):=\#H_f^\uparrow(G)$ (resp.
  $h_f^\downarrow(G):=\#H_f^\downarrow(G)$),
\item the number of non used cut half edges is $h_c(G):=h^{\uparrow}(G)-h^\uparrow_f(G)+h^{\downarrow}(G)-h^\downarrow_f(G)$,
\item the set of  half edges associated to a vertex $i$ is
 $\hat H(i):=\llbracket \lambda_1+\cdots+\lambda_{i-1}+1,\lambda_1+\cdots+\lambda_{i}\rrbracket$,
\item the set of outer (resp. inner) non cut half edges associated to a vertex $i$,
 $\hat H^\uparrow_f(i):=E^\uparrow\cap\llbracket \lambda_1+\cdots+\lambda_{i-1}+1,\lambda_1+\cdots+\lambda_{i}\rrbracket$
  (resp. $\hat H^\downarrow_f(i):=E^\downarrow\cap\llbracket \lambda_1+\cdots+\lambda_{i-1}+1,\lambda_1+\cdots+\lambda_{i}\rrbracket$),
\item we will also use  the map $v$ of Definition \ref{DBDiag}; this map will be denoted $v_G$ in case of ambiguity.
\end{enumerate}
\begin{example}
\rm
Consider the B-diagram $G=(n,\lambda,E^\uparrow,E^\downarrow,E)$ represented in Figure \ref{bdiag1}. We have
$|G|= 3$, $\omega(G)=6$, and $\tau(G)=3$. We also have  $h^\uparrow(G)=3$, $h^\uparrow_f(G)=2$, $h^\downarrow(G)=h^\downarrow_f(G)=3$, and $h_c(G)=1$ since
$H_f^\uparrow(G)=\{3,5\}$ and  $H_f^\downarrow(G)=\{1,2,3\}$. Furthermore $\hat H(1)=\hat H_f^\uparrow(1)=\hat H_f^\downarrow(1)=\{1,2,3\}$, $\hat H(2)=\hat H_f^\uparrow(2)=\hat H_f^\downarrow(2)=\{4\}$, $\hat H(3)=\hat H_f^\downarrow(3)=\{5,6\}$, and $\hat H_f^\uparrow(3)=\{5\}$. Finally, $v(1)=v(2)=v(3)=1$, $v(4)=2$, and $v(5)=v(6)=3$.
\end{example}
A special example B-diagram is given by the \emph{empty diagram} $\varepsilon:=(0,[],\emptyset,\emptyset,\emptyset)$. This is the only diagram of weight $0$.

\begin{definition}\label{subdiag}
Let $G=(n,\lambda,E^\uparrow,E^\downarrow,E)$ be a B-diagram. A \emph{sub B-diagram} of $G$ is completely characterized by a sequence $1\leq i_1<\cdots<i_{n'}\leq n$. More precisely, we define the B-diagram $G[i_1,\dots,i_{n'}]=(n',\lambda',E'^\uparrow,E'^\downarrow,E')$ by

\begin{enumerate}
\item $\lambda'=[\lambda_{i_1},\dots,\lambda_{i_{n'}}]$,
\item $E'^\uparrow=\phi\left(E^\uparrow\cap \bigcup_{\ell=1}^{n'}\hat H(i_\ell)\right)$ and $E'^\downarrow=\phi\left(E^\downarrow\cap \bigcup_{\ell=1}^{n'}\hat H(i_\ell)\right)$  where $\phi$ is the only increasing bijection sending $\bigcup_{\ell=1}^{n'}\hat H(i_\ell)$ to $\llbracket 1,\lambda_{i_1}+\cdots+\lambda_{i_{n'}}\rrbracket$.
\item $E'=\phi\left(E\cap \bigcup_{\ell=1}^{n'}\hat H^\uparrow_f(i_\ell)\times \hat H^\downarrow_f(i_\ell)\right)$
\end{enumerate}
Let $I=[i_1,\dots,i_{n'}]$ be a sequence of vertices of $G$, we let  $\complement_G I=\llbracket 1,n\rrbracket\setminus \{i_1,\dots,i_{n'}\}$ denote the \emph{complement} of $I$ in $G$.
\end{definition}

\begin{example}\rm
Let $G$ be the B-diagram of Figure \ref{bdiag1}. We have $\hat H(1)=\hat H^\uparrow_f(1)=\hat H^\downarrow_f(1)=\{1,2,3\}$, $\hat H(2)=\hat H^\uparrow_f(2)=\hat H^\downarrow_f(2)=\{4\}$, $\hat H(3)=\hat H^\downarrow_f(3)=\{5,6\}$, and $\hat H^\uparrow_f(3)=\{5\}$. Set $i_1=1$ and $i_2=3$. Following Definition \ref{subdiag}, we have
\begin{enumerate}
\item $\lambda'=[3,2]$,
\item $\phi$ sends respectively $1, 2, 3, 5, 6$ to $1, 2, 3, 4, 5$. Hence, $E'^\uparrow=\phi\left(\{1,2,3,5\}\right)=\{1,2,3,4\}$ and
$E'^\downarrow=\phi\left(\{1,2,3,5,6\}\right)=\{1,2,3,4,5\}$,
\item $E'=\phi\left(\{(1,6)\}\right)=\{(1,5)\}$.
\end{enumerate}
 We deduce that $G[1,3]=(2,[3,2],\llbracket 1,4\rrbracket,\llbracket 1,5 \rrbracket,\{(1,5)\})$ (see Figure \ref{Fsubdiag}).
\begin{figure}[h]
\begin{center}
\begin{tikzpicture}
\setcounter{Edge}{1}
\setcounter{Vertex}{1}
\bugddduuu 00{b1}
\bugddux 03{b3}
\draw (d2b3) edge[in=90,out=270,] (u1b1);
\end{tikzpicture}
\end{center}
\caption{The sub B-diagram $G[1,3]$ of $G=(3,[3,1,2],\llbracket 1,5\rrbracket,\llbracket 1,6\rrbracket,\{(1,6), (2,4), (4,5)\})$\label{Fsubdiag}}
\end{figure}
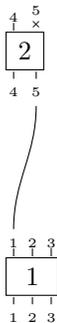

\end{example}
\subsection{Connections and compositions}
\begin{definition}
A B-diagram $G=(n,\lambda,E^\uparrow,E^\downarrow,E)$ is \emph{connected} if and only if for each $1\leq i<j\leq n$, there exists a sequence
of edges $(a_1,b_1),\dots,(a_k,b_k)\in E$ satisfying
\begin{enumerate}
\item $i\in\{v(a_1),v(b_1)\}$ and $j\in\{v(a_k),v(b_k)\}$,
\item for each $1\leq \ell<k$ one has $v(a_\ell)\in\{v(a_{\ell+1}),v(b_{\ell+1})\}$ or $v(b_\ell)\in\{v(a_{\ell+1}),v(b_{\ell+1})\}$.
\end{enumerate}
\end{definition}

A \emph{connected component} of a B-diagram $G$ is a sequence $i_1<\cdots<i_{n'}$ such that $G[i_1,\dots,i_{n'}]$ is a connected sub B-diagram which is maximal in the sense that if we add any vertex $i$ in the sequence $i_1<\cdots<i_{n'}$ then we obtain a sequence $i'_1<\cdots<i'_{n'+1}$ such that $G[i'_1,\dots,i'_{n'+1}]$ is not connected. Let $\mathrm{Connected}(G)$ denote the set of the connected components of $G$.
\begin{example}\rm
The B-diagram in Figure \ref{bdiag1} is connected whilst the B-diagram in Figure \ref{bdiag2} has two connected components $[1,3]$ and $[2,4]$.
\end{example}
A sequence $1\leq i_1<\dots,i_{n'}\leq n$ is \emph{isolated} in $G$  if for each $(a,b)\in E$ implies that $v(a)$ and $v(b)$ are both in
$\{i_1,\dots,i_{n'}\}$ or both in $\complement_G \{i_1,\dots,i_{n'}\}$.

\begin{claim}\label{isolation}The following assertions are equivalent
\begin{enumerate}
\item $I$ is isolated in $G$
\item $\complement_G I$ is isolated in $G$
\item There exist $j_1^1<\cdots<j^1_{k_1},\dots, j_1^p<\cdots<j_{k_p}^{p}$ such that $I=\{j_1^1,\cdots,j^1_{k_1},\dots, j_1^p,\cdots,j_{k_p}^{p}\}$ and each sequence $[j^\ell_1,\dots,j^\ell_{k_\ell}]$ is a connected component of $G$.
\end{enumerate}
\end{claim}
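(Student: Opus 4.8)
The plan is to prove the three-way equivalence by showing $(1)\Leftrightarrow(3)$ and $(3)\Leftrightarrow(2)$, since $(3)$ is symmetric under replacing $I$ by $\complement_G I$ (a connected component either lies entirely in $I$ or entirely in its complement, so the partition of $\llbracket 1,n\rrbracket$ into connected components refines the partition $\{I,\complement_G I\}$ in exactly the same way from either side). Thus once $(1)\Leftrightarrow(3)$ is established, applying it to $\complement_G I$ and using $\complement_G\complement_G I=I$ yields $(2)\Leftrightarrow(3)$ for free, and transitivity closes the loop.

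For $(3)\Rightarrow(1)$: assume $I$ is a union of connected components, say $I=C_1\cup\cdots\cup C_p$ with each $C_\ell\in\mathrm{Connected}(G)$. Take any edge $(a,b)\in E$. By the definition of connected component, $v(a)$ and $v(b)$ both belong to the same connected component (an edge joins two vertices, which are trivially linked by the length-one sequence consisting of that edge, so they lie in one maximal connected piece). That component is either one of the $C_\ell$, hence $\subseteq I$, or disjoint from all $C_\ell$, hence $\subseteq\complement_G I$. Either way $v(a),v(b)$ are on the same side, so $I$ is isolated.

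For $(1)\Rightarrow(3)$: assume $I$ is isolated. First observe that if $i\in I$ and $i'$ lies in the same connected component of $G$ as $i$, then $i'\in I$: walking along the connecting sequence of edges $(a_1,b_1),\dots,(a_k,b_k)$, each edge has both its endpoints on the same side of $\{I,\complement_G I\}$ by isolation, and consecutive edges share a vertex, so the side never changes along the path; since it starts in $I$ it ends in $I$. Hence $I$ is a union of connected components (it is closed under "being in the same component", and the components partition $\llbracket 1,n\rrbracket$, so $I$ is the union of those components it meets). Writing each such component in increasing order as $[j_1^\ell,\dots,j_{k_\ell}^\ell]$ gives the decomposition in $(3)$. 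A minor point to handle carefully is the case where a connected component of $G$ need not be an interval of consecutive integers — but Definition \ref{subdiag} and the notion of connected component already allow arbitrary increasing sequences, so no adjustment is needed, and the formula $I=\{j_1^1,\dots,j_{k_p}^p\}$ is just the set underlying the disjoint union.

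I do not expect a serious obstacle here; the statement is essentially the observation that "isolated" means "saturated for the connectivity equivalence relation on vertices." The only place requiring a little care is making sure the path argument in $(1)\Rightarrow(3)$ correctly uses condition (2) of the definition of connectedness (consecutive edges share a vertex) rather than some stronger notion of path, and that the edge in $(3)\Rightarrow(1)$ is correctly seen to link its two endpoints via the trivial length-one sequence — both are routine once the definitions are unwound. I would present the argument as the cycle $(3)\Rightarrow(1)\Rightarrow(3')$ where $(3')$ is $(3)$ restated, then deduce $(2)$ by the symmetry of $(3)$.
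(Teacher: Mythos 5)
Your argument is correct. Note first that the paper states this Claim without any proof (it is presented as an easy observation), so there is nothing to compare against line by line; your write-up is a legitimate way to fill the gap. The two directions you prove are sound: for $(3)\Rightarrow(1)$, an edge $(a,b)\in E$ does link $v(a)$ and $v(b)$ by the length-one chain, so both lie in one connected component, which sits entirely inside $I$ or entirely inside $\complement_G I$; for $(1)\Rightarrow(3)$, the chain from the definition of connectedness has consecutive edges sharing a vertex, and isolation forces each edge's two endpoints onto the same side, so $I$ is saturated for the connectivity relation and is therefore a union of connected components. Your implicit use of the fact that the maximal connected sub-B-diagrams partition $\llbracket 1,|G|\rrbracket$ is the standard graph-theoretic fact and is unproblematic here.

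One small streamlining: the equivalence $(1)\Leftrightarrow(2)$ is immediate from the definition of ``isolated,'' since the condition ``$v(a)$ and $v(b)$ are both in $I$ or both in $\complement_G I$'' is literally unchanged when $I$ is replaced by $\complement_G I$. Routing $(2)$ through the symmetry of $(3)$ works but is a detour; you could state $(1)\Leftrightarrow(2)$ in one line and then prove only $(1)\Leftrightarrow(3)$.
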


\begin{remark}
\begin{itemize}
\item A B-diagram $G$ and the empty diagram $\varepsilon$ are both isolated in $G$.
\item If $I$ is a connected component of $G$ then it is isolated in $G$.
\end{itemize}
\end{remark}
Let  $\mathrm{Iso}(G)$ denote the set isolated sequences in $G$ and set $\mathrm{Split}(G)=\left\{(I,\complement_GI):I\in\mathrm{Iso}(G)\right\}.$
\begin{definition}
Let $G=(n,\lambda,E^\uparrow,E^\downarrow,E)$ and $G'=(n',\lambda',E'^\uparrow,E'^\downarrow,E')$ be two B-diagrams. For any $k\geq 0$, any strictly increasing sequence $a_1<\dots<a_k$ in $H_f^\uparrow(G)$ and any $k$-tuple of distinct integers $b_1,\dots,b_k$ in $H_f^\downarrow(G')$, we define the \emph{composition} $\mcomp{a_1,\dots,a_k}{b_1,\dots,b_k}$ by
\[
\begin{array}{c}
G'\\
\mcomp{a_1,\dots,a_k}{b_1,\dots,b_k}\\
G
\end{array}=G'',
\]
where $G''$ is the $5$ tuple $(n+n',[\lambda_1,\dots,\lambda_n,\lambda'_1,\dots,\lambda'_n],E''^\uparrow,E''^\downarrow,E'')$ with
\begin{enumerate}
\item $E''^\uparrow=E'^\uparrow\cup\{i+\omega(G):i\in E'^\uparrow\}$ and $E''^\downarrow=E'^\downarrow\cup\{i+\omega(G):i\in E'^\downarrow\}$,
\item $E''=E \cup \{(a_\ell,b_\ell+\omega(G)):1\leq \ell\leq k\}\cup \{(a+\omega(G),b+\omega(G)):(a,b)\in E'\}$.
\end{enumerate}
\end{definition}
We easily check that the definition of composition is coherent with the structure of B-diagram
\begin{claim}
Let $G=(n,\lambda,E^\uparrow,E^\downarrow,E)$ and $G'=(n',\lambda',E'^\uparrow,E'^\downarrow,E')$ be two B-diagrams. For any $k\geq 0$, any strictly increasing sequence $a_1<\dots<a_k$ in $H_f^\uparrow(G)$ and any $k$-tuple of distinct integers $b_1,\dots,b_k$ in $H_f^\downarrow(G')$, the $5$-tuple $\begin{array}{c}
G'\\
\mcomp{a_1,\dots,a_k}{b_1,\dots,b_k}\\
G
\end{array}$ is a B-diagram.
\end{claim}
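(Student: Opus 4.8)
The plan is to verify directly that the $5$-tuple $G''=(n+n',[\lambda_1,\dots,\lambda_n,\lambda'_1,\dots,\lambda'_{n'}],E''^\uparrow,E''^\downarrow,E'')$ satisfies each of the five conditions of Definition~\ref{DBDiag}. Conditions (1) and (2) are immediate, since $n+n'\in\N$ and all entries of $\lambda''=[\lambda_1,\dots,\lambda_n,\lambda'_1,\dots,\lambda'_{n'}]$ are positive integers. The one preliminary observation worth isolating is a ``locality'' fact about the valuation map $v_{G''}$ attached to $\lambda''$: because the first $n$ blocks of $\lambda''$ are the blocks of $\lambda$ and the last $n'$ blocks are the blocks of $\lambda'$, unwinding the definition of $v$ gives $v_{G''}(k)=v_G(k)$ for $k\in\llbracket 1,\omega(G)\rrbracket$ and $v_{G''}(\omega(G)+m)=n+v_{G'}(m)$ for $m\in\llbracket 1,\omega(G')\rrbracket$. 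In particular every half-edge inherited from $G$ sits on a vertex in $\llbracket 1,n\rrbracket$ and every half-edge inherited from $G'$ sits on a vertex in $\llbracket n+1,n+n'\rrbracket$.

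For condition (3), $E^\uparrow\subseteq\llbracket 1,\omega(G)\rrbracket$ and $\{i+\omega(G):i\in E'^\uparrow\}\subseteq\llbracket\omega(G)+1,\omega(G)+\omega(G')\rrbracket$, so $E''^\uparrow\subseteq\llbracket 1,\omega(G'')\rrbracket$, and symmetrically for $E''^\downarrow$. For condition (4) I would split $E''$ into its three constituent families and check each. For an edge $(a,b)\in E$ we have $a\in E^\uparrow\subseteq E''^\uparrow$, $b\in E^\downarrow\subseteq E''^\downarrow$, and $v_{G''}(a)=v_G(a)<v_G(b)=v_{G''}(b)$. For a new edge $(a_\ell,b_\ell+\omega(G))$ we have $a_\ell\in H_f^\uparrow(G)\subseteq E^\uparrow\subseteq E''^\uparrow$, and $b_\ell\in H_f^\downarrow(G')\subseteq E'^\downarrow$ gives $b_\ell+\omega(G)\in\{i+\omega(G):i\in E'^\downarrow\}\subseteq E''^\downarrow$, while $v_{G''}(a_\ell)\le n<n+1\le v_{G''}(b_\ell+\omega(G))$. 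For a shifted edge $(a+\omega(G),b+\omega(G))$ with $(a,b)\in E'$, the verification is the same as for $E$ after shifting half-edges by $\omega(G)$ and vertices by $n$, using $v_{G'}(a)<v_{G'}(b)$.

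The step I expect to be the real obstacle is condition (5): that in $G''$ every outer half-edge, and every inner half-edge, meets at most one edge. I would argue by location of the half-edge. An outer half-edge $a>\omega(G)$ can only be the first coordinate of a shifted edge, so condition (5) for $G'$ applies; an outer half-edge $a\le\omega(G)$ with $a\in e^\uparrow(E)$ can only be the first coordinate of an edge of $E$, because the new edges use first coordinates $a_\ell\in H_f^\uparrow(G)=E^\uparrow\setminus e^\uparrow(E)$, disjoint from $e^\uparrow(E)$, so condition (5) for $G$ applies; and a half-edge of the form $a_\ell$ is used exactly once among the new edges since $a_1<\dots<a_k$ are pairwise distinct, and not at all by edges of $E$ or shifted edges. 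Symmetrically, an inner half-edge $b\le\omega(G)$ can only be a second coordinate of an edge of $E$ (the new and shifted edges have second coordinates $>\omega(G)$), so condition (5) for $G$ applies; an inner half-edge $b=b'+\omega(G)$ with $b'\in e^\downarrow(E')$ can only come from a shifted edge, since the new edges use second coordinates $b_\ell+\omega(G)$ with $b_\ell\in H_f^\downarrow(G')=E'^\downarrow\setminus e^\downarrow(E')$, disjoint from $e^\downarrow(E')$, so condition (5) for $G'$ applies; and a half-edge of the form $b_\ell+\omega(G)$ is used exactly once among the new edges because $b_1,\dots,b_k$ are distinct, and not at all by shifted edges. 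Collecting these cases gives (5), so $G''$ is a B-diagram. The whole thing is bookkeeping; the only conceptual point is that the composition attaches each new edge to a half-edge that was \emph{free} — in $H_f^\uparrow(G)$ or in $H_f^\downarrow(G')$ — which is exactly what the hypotheses on $a_1,\dots,a_k$ and $b_1,\dots,b_k$ guarantee, and that $G$'s vertices precede $G'$'s vertices so the orientation condition $v(a)<v(b)$ for the new edges is automatic.
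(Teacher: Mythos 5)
Your proof is correct and is exactly the routine verification the paper has in mind: the paper offers no proof at all (it simply says ``We easily check that the definition of composition is coherent with the structure of B-diagram''), and your case-by-case check of the five conditions of Definition \ref{DBDiag} supplies precisely the omitted details, with the key point correctly isolated --- condition (5) survives because the new edges only attach to half-edges in $H_f^\uparrow(G)=E^\uparrow\setminus e^\uparrow(E)$ and $H_f^\downarrow(G')=E'^\downarrow\setminus e^\downarrow(E')$, which are disjoint from the half-edges already used by $E$ and $E'$. One small remark: the paper's displayed definition of the composition reads $E''^\uparrow=E'^\uparrow\cup\{i+\omega(G):i\in E'^\uparrow\}$, which is evidently a typo for $E^\uparrow\cup\{i+\omega(G):i\in E'^\uparrow\}$; you silently adopt the corrected reading (using $E^\uparrow\subseteq E''^\uparrow$), which is the only one under which the claim, and your proof, make sense.
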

\begin{example}
\rm Figure \ref{comp1} gives an example of composition.
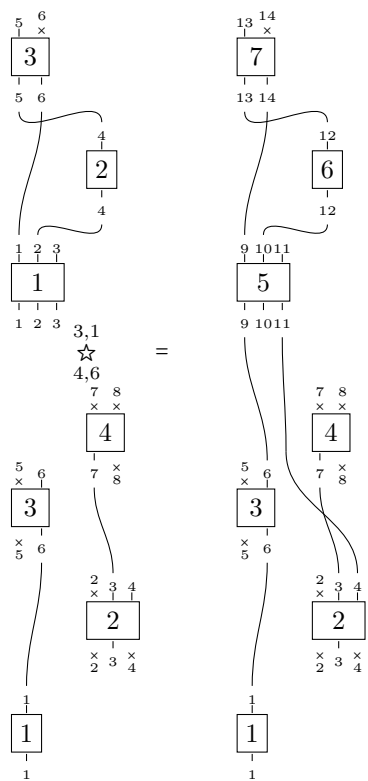
\begin{figure}[h]
\begin{center}
\begin{tikzpicture}
\setcounter{Edge}{1}
\setcounter{Vertex}{1}
\bugddduuu 06{b1}
\bugdu 1{7.5}{b2}
\bugddux 09{b3}
\draw (d1b2) edge[in=90,out=270]  (u2b1);
\draw (d1b3) edge[in=90,out=270] (u1b2);
\draw (d2b3) edge[in=90,out=270,] (u1b1);
\node (bla) at (1,5.3) {$\mcomp{4,6}{3,1} $};
\node (bla) at (2,5.3) {$=$};
\setcounter{Edge}{1}
\setcounter{Vertex}{1}
\bugdu 00{b1}
\bugxdxxuu 1{1.5}{b2}
\bugxdxu 03{b3}
\bugdxxx 1{4}{b4}
\draw (d2b3) edge[in=90,out=270]  (u1b1);
\draw (d1b4) edge[in=90,out=270] (u2b2);

\setcounter{Edge}{1}
\setcounter{Vertex}{1}

\bugdu 30{bb1}
\bugxdxxuu 4{1.5}{bb2}
\bugxdxu 33{bb3}
\bugdxxx 4{4}{bb4}
\draw (d2bb3) edge[in=90,out=270]  (u1bb1);
\draw (d1bb4) edge[in=90,out=270] (u2bb2);

\bugddduuu 36{bbb1}
\bugdu 4{7.5}{bbb2}
\bugddux 39{bbb3}
\draw (d1bbb2) edge[in=90,out=270]  (u2bbb1);
\draw (d1bbb3) edge[in=90,out=270] (u1bbb2);
\draw (d2bbb3) edge[in=90,out=270] (u1bbb1);

\draw (d3bbb1) edge[in=90,out=270] (3.65,4);
\draw (3.65,4) edge[in=90,out=270] (u3bb2);
\draw (d1bbb1) edge[in=90,out=270] (u2bb3);

\end{tikzpicture}
\end{center}
\caption{An example of composition\label{comp1}}
\end{figure}
\end{example}
Note that the special case where $k=0$ corresponds to a simple juxtaposition of the B-diagrams (see Figure \ref{comp2} for an example). We set $G'|G'':=\begin{array}{c} G''\\\medstar\\G'\end{array}$.
 The operation $|$ endows the set of the B-diagrams $\mathbb B$ with a structure of  monoid  whose unity is $\varepsilon$.
 \begin{definition}
 A B-diagram $G$ is \emph{indivisible} if $G=G'|G''$ implies $G'=G$ or $G''=G$. Let $\mathbb G$ denote the set of indivisible B-diagrams.
 \end{definition}
 It is easy to check that the indivisible diagrams are algebraically independent. It follows
 \begin{proposition}\label{BMfree}
 The monoid $(\mathbb B,|)$ is freely generated by $\mathbb G$: $\mathbb B \simeq\mathbb G^*$.
 \end{proposition}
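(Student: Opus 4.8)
The plan is to show two things: first, that every B-diagram factors as a juxtaposition of indivisible ones, and second, that such a factorization is unique, so that the assignment of a word in $\mathbb G^*$ to a B-diagram is a bijection compatible with the products.

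First I would set up the factorization. Given a B-diagram $G=(n,\lambda,E^\uparrow,E^\downarrow,E)$, consider its connected components, listed in the natural order coming from the ordering of the vertices; say they are $C_1,\dots,C_p$ where $C_t=[j_1^t<\cdots<j_{k_t}^t]$. I claim the sequence of vertex-labels obtained by concatenating $C_1,\dots,C_p$ is exactly $[1,2,\dots,n]$. This uses the structure of the composition $\medstar$ (the case $k=0$): an edge of $G$ can only relate vertices $v(a)<v(b)$, and in a juxtaposition $G'|G''$ the new block $G''$ is appended after $G'$ with labels shifted by $|G'|$, so edges never cross between the two blocks. Hence each connected component is an \emph{interval} of consecutive vertices, and the components partition $\llbracket 1,n\rrbracket$ into consecutive intervals $I_1,\dots,I_p$. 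Defining $G_t:=G[I_t]$ and checking from Definition~\ref{subdiag} (and the definition of $\medstar$ with $k=0$) that $G=G_1|G_2|\cdots|G_p$, we get a factorization into diagrams each of which is connected, hence indivisible: a connected diagram with at least one vertex cannot be written $G'|G''$ nontrivially because any $G'|G''$ with $G',G''\ne\varepsilon$ has no edge joining the first block to the second and so is disconnected (vertices in different blocks cannot be linked). I should also treat $\varepsilon$, which is the empty word, and the one-vertex/no-edge diagrams, which are connected by the (vacuous) definition of connectedness, hence indivisible.

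Next I would prove uniqueness. Suppose $G=H_1|\cdots|H_q$ with each $H_s$ indivisible. Since $\medstar$ with $k=0$ adds no edges between blocks, the vertex sets of the $H_s$ form consecutive intervals $J_1,\dots,J_q$ of $\llbracket 1,n\rrbracket$ with $H_s=G[J_s]$, and no edge of $G$ joins two distinct $J_s$'s. Therefore each connected component of $G$ is contained in a single $J_s$. Conversely, if some $J_s$ strictly contained two or more connected components of $G$, then by Claim~\ref{isolation} we could split $J_s$ into two nonempty isolated pieces and realize $H_s=G[J_s]$ as a nontrivial juxtaposition, contradicting indivisibility of $H_s$. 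Hence each $J_s$ is exactly one connected component, so $q=p$ and $J_s=I_s$ after reindexing in increasing order, giving $H_s=G_s$. This is the uniqueness of the factorization, i.e. $\mathbb B$ is freely generated by $\mathbb G$, so $\mathbb B\simeq\mathbb G^*$ as monoids with $\varepsilon$ corresponding to the empty word.

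The routine but slightly fiddly verification — the part I expect to be the main obstacle — is the bookkeeping in Definition~\ref{subdiag} needed to confirm that restricting to an interval $I_t$ and then juxtaposing the restrictions genuinely recovers $G$: one must check that the relabelling bijections $\phi$ for the successive intervals compose to the identity on $\llbracket 1,\omega(G)\rrbracket$, and that condition~(3) of Definition~\ref{subdiag} does not discard any edge, which holds precisely because every edge of $G$ stays within a single connected component (hence within a single $I_t$) and thus lies in $\hat H_f^\uparrow(i_\ell)\times\hat H_f^\downarrow(i_\ell)$ summed over the vertices $i_\ell$ of that interval. I would also invoke the already-noted fact stated just before the proposition that the indivisible diagrams are algebraically independent to streamline the uniqueness argument, but the connected-component analysis above makes the claim self-contained.
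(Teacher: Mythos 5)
Your argument contains a genuine error: you identify the indivisible factors of $G$ with its connected components, and in particular you assert that each connected component of a B-diagram is an interval of consecutive vertices. This is false, and the paper itself supplies the counterexample: the B-diagram of Figure \ref{bdiag2}, namely $(4,[1,3,2,2],\{1,3,4,6\},\{1,3,6,7\},\{(1,6),(3,7)\})$, has the two connected components $\{1,3\}$ and $\{2,4\}$, which interleave. Since a nontrivial juxtaposition $H'|H''$ places all vertices of $H'$ before all vertices of $H''$ and creates no edge between the two blocks, that diagram cannot be split at any of its three possible cut points (the edge $(1,6)$ joins vertices $1$ and $3$, the edge $(3,7)$ joins vertices $2$ and $4$, so every cut is crossed); it is therefore indivisible but disconnected. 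So ``connected $\Rightarrow$ indivisible'' is correct, but the converse fails, and the factorization $G=G_1|\cdots|G_p$ with $G_t$ the restriction to the $t$-th connected component simply does not exist in general. The same confusion undermines your uniqueness step: an isolated subset in the sense of Claim \ref{isolation} need not be a prefix interval, so knowing that a block $J_s$ contains two or more connected components does not let you realize $G[J_s]$ as a nontrivial juxtaposition. As written, your argument would prove that $\mathbb B$ is freely generated by the \emph{connected} diagrams, which generate a strictly smaller submonoid.

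The repair is to work with cut points rather than components: $G=G'|G''$ nontrivially if and only if there is a $k$ with $0<k<|G|$ such that no edge of $G$ joins a vertex $\leq k$ to a vertex $>k$, i.e.\ $\llbracket 1,k\rrbracket$ is an isolated \emph{prefix}. The set of such $k$ is canonically attached to $G$; it cuts $\llbracket 1,|G|\rrbracket$ into consecutive intervals, each a union of connected components, whose restrictions are indivisible, and any factorization into indivisibles must have exactly these intervals as its blocks (a coarser or finer interval decomposition either misses a cut point, contradicting indivisibility of a factor, or uses a non-cut point, contradicting the absence of crossing edges). This gives both existence and uniqueness. Note that the paper offers essentially no argument here -- it only asserts that the indivisible diagrams are algebraically independent -- so a correct proof along these lines would be a genuine addition; but the component-based route you chose does not work.
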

\begin{figure}[h]
\begin{center}
\begin{tikzpicture}
\setcounter{Edge}{1}
\setcounter{Vertex}{1}
\bugdu 00{b1}
\bugxdxu 02{b3}
\draw (d2b3) edge[in=90,out=270]  (u1b1);

\setcounter{Edge}{1}
\setcounter{Vertex}{1}
\node(bla) at (0.3,3.2) {$\medstar$};
\node(eq) at (1,3.2) {$=$};
\bugxdxxuu 0{4}{b2}
\bugdxxx 0{6}{b4}
\draw (d1b4) edge[in=90,out=270] (u2b2);

\setcounter{Edge}{1}
\setcounter{Vertex}{1}
\bugdu 20{b1}
\bugxdxu 23{b3}
\bugxdxxuu 3{1.5}{b2}
\bugdxxx 3{4}{b4}
\draw (d2b3) edge[in=90,out=270]  (u1b1);
\draw (d1b4) edge[in=90,out=270] (u2b2);
\draw (d2b3) edge[in=90,out=270,] (u1b1);
\end{tikzpicture}
\end{center}
\caption{An example of composition when $k=0$ \label{comp2}}
\end{figure}
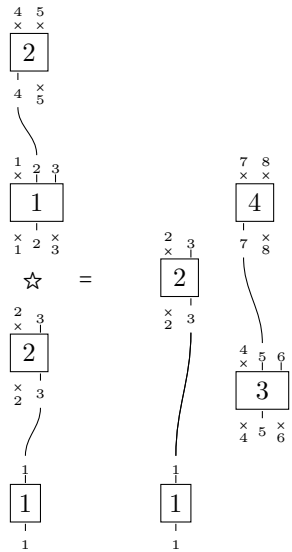
We can give an alternative recursive definition for the B-diagrams using the compositions.

\begin{lemma}\label{recBDiag}
Let $G=(n,\lambda,E^\uparrow,E^\downarrow,E)$ be a B-diagram. Either $G=\varepsilon$ or there exists a B-diagram
$V=(1,[p],E^\uparrow_v,E^\downarrow_v,E_v)$, a B-diagram $\tilde G=(n-1,\tilde \lambda,\tilde E^\uparrow,\tilde E^\downarrow,\tilde E)$ and two sequences $1\leq a_1<\dots<a_k\leq p$ and $1\leq b_1,\dots,b_k\leq \omega(\tilde G)$ distinct satisfying
\[
G=\begin{array}{c}
\tilde G\\
\mcomp{a_1,\dots,a_k}{b_1,\dots,b_k}\\
V
\end{array}.
\]
\end{lemma}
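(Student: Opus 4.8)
The plan is to peel off the first vertex. If $n=0$ then $G=\varepsilon$, the unique diagram of weight $0$, and there is nothing to prove; so assume $n\geq 1$, which is equivalent to $G\neq\varepsilon$ since $n\geq 1$ forces $\omega(G)\geq\lambda_1\geq 1$. Put $p:=\lambda_1$ and take $V:=G[1]$ and $\tilde G:=G[2,\dots,n]$, which are B-diagrams by Definition~\ref{subdiag}; by construction $V$ has a single vertex carrying $p$ half-edges and $\tilde G$ has weight sequence $\tilde\lambda=[\lambda_2,\dots,\lambda_n]$. Since every $(a,b)\in E$ satisfies $v(a)<v(b)$, no edge can have both endpoints at the same vertex, so $V$ carries no edge and hence $H_f^\uparrow(V)=E^\uparrow\cap\llbracket 1,p\rrbracket$.

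Next I would isolate the edges of $E$ incident to vertex $1$. Using $v(a)<v(b)$ again, any such edge must have its outer endpoint $a$ in $\hat H(1)=\llbracket 1,p\rrbracket$ and its inner endpoint in $\hat H(2)\cup\dots\cup\hat H(n)=\llbracket p+1,\omega(G)\rrbracket$. By the fifth axiom of Definition~\ref{DBDiag}, distinct such edges have distinct outer endpoints and distinct inner endpoints, so I can list them as $(a_1,\beta_1),\dots,(a_k,\beta_k)$ with $a_1<\dots<a_k$ in $\llbracket 1,p\rrbracket$ and $\beta_1,\dots,\beta_k$ distinct in $\llbracket p+1,\omega(G)\rrbracket$. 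Writing $\phi$ for the increasing bijection $\llbracket p+1,\omega(G)\rrbracket\to\llbracket 1,\omega(\tilde G)\rrbracket$ (that is, $\phi(x)=x-p$) used to build $\tilde G$, set $b_\ell:=\phi(\beta_\ell)$. Then $a_1<\dots<a_k$ is a strictly increasing sequence in $H_f^\uparrow(V)$, while $b_1,\dots,b_k$ are distinct integers in $\llbracket 1,\omega(\tilde G)\rrbracket$ lying in $H_f^\downarrow(\tilde G)$: each $\beta_\ell$ is the inner endpoint of an edge of $G$ and, by the fifth axiom, of no other edge of $G$, in particular of no edge of $G$ lying entirely within vertices $2,\dots,n$.

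It then remains to check that
\[
G=\begin{array}{c}\tilde G\\\mcomp{a_1,\dots,a_k}{b_1,\dots,b_k}\\ V\end{array},
\]
which is a direct comparison of the two $5$-tuples. The numbers of vertices agree ($1+(n-1)=n$) and so do the weight sequences ($[\lambda_1,\lambda_2,\dots,\lambda_n]=\lambda$). For the half-edge sets, the composition yields $E^\uparrow_v\cup\{i+p:i\in\tilde E^\uparrow\}=(E^\uparrow\cap\llbracket 1,p\rrbracket)\cup(E^\uparrow\cap\llbracket p+1,\omega(G)\rrbracket)=E^\uparrow$ by the definition of $\phi$, and likewise for $E^\downarrow$. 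For the edge set, the composition produces $\{(a_\ell,b_\ell+p):1\leq\ell\leq k\}\cup\{(a+p,b+p):(a,b)\in\tilde E\}$; since $b_\ell+p=\beta_\ell$ the first part is exactly the set of edges of $G$ incident to vertex $1$, and since $\tilde E$ is the $\phi$-image of the edges of $G$ whose two endpoints lie among vertices $2,\dots,n$, the second part is exactly the set of those edges, so the union is $E$.

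The only genuinely delicate points are the two well-definedness claims of the second paragraph — that the $a_\ell$ are free outer half-edges of $V$ and the $b_\ell$ are free inner half-edges of $\tilde G$ — and both follow immediately from the ``at most one incident edge per half-edge'' condition together with the ordering condition $v(a)<v(b)$; everything else is bookkeeping with the relabelling $\phi$.
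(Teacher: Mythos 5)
Your proof is correct and complete; the paper in fact states Lemma \ref{recBDiag} with no proof at all, and your argument --- peeling off vertex $1$ as $V=G[1]$ and $\tilde G=G[2,\dots,n]$, using the condition $v(a)<v(b)$ to see that every edge incident to vertex $1$ leaves it through an outer half-edge, and invoking axiom (5) of Definition \ref{DBDiag} to get the distinctness of the $a_\ell$ and $\beta_\ell$ and the fact that the $b_\ell$ remain free in $\tilde G$ --- is exactly the evident decomposition the authors rely on. The final tuple-by-tuple comparison is the right level of detail, and I see no gap.
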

Let us set
\begin{equation}
G\medstar G':=\left\{\begin{array}{c}
G'\\
\mcomp{a_1,\dots,a_k}{b_1,\dots,b_k}\\
G
\end{array}: a_1<\dots<a_k\in H_f^\uparrow(G) b_1,\dots,b_k\in H_f^\downarrow(G')\mbox{ distinct and }k\geq 0\right\}.
\end{equation}
\subsection{Enumeration}
Let $d_{p,q}$ denote the number of B-diagram $G$ such that $\omega(G)=p$ and $h_f^\uparrow(G)=q$ (see in Figure \ref{dpq} the first values of $d_{p,q}$ or the sequence A265199 in \cite{Sloane}).

 From lemma \ref{recBDiag}, we find the induction
\begin{equation}\label{enumdiag}
d_{p,q}=\sum_{i=1}^p\sum_{j=0}^i\sum_{k=0}^i\sum_{\ell=0}^j \ell!\binom j\ell\binom {q-k+\ell}\ell\binom ij\binom ik d_{p-i,q-k+\ell},
\end{equation}
with the special cases $d_{0,0}=1$ and $d_{p,q}=0$ if $p,q\leq 0$ and $(p,q)\neq (0,0)$. Indeed, we obtain a diagram with $p$ half edges and $q$ non used outer non cut half edges by branching $\ell$ inner half edges of an elementary B-diagram with $i$ half edges, $j\leq i$ non used inner non cut half edges, and $k\leq i$ non used outer non cut half edges to a B-diagram with $p-i$ half edges and  $q-k+\ell$ non used outer non cut half edges. The number of ways to do that is $\ell!\binom j\ell\binom {q-k+\ell}\ell\binom ij\binom ik$. Indeed, the factor $\ell!$ is the number of permutation of the inner half edges of the elementary B-diagram, the factor $\binom j\ell$ corresponds to the choice of $\ell$ half edges in the set of the $j$ possible non cut half edges, the coefficients $\binom {q-k+\ell}\ell$ is the number of ways to choose $\ell$ outer half edges in the second B-diagram, and the factors $\binom ij\binom ik$ is the number of ways to select $j$ inner half edges and $k$ outer half edges in $i$ half edges.
\begin{figure}[h]
\[
\begin{array}{|c|c|c|c|c|c|c|c|}\hline
d_{p,q}&0&1&2&3&4&5&6\\\hline
0&1&&&&&&\\\hline
1&2&2&&&&&\\\hline
2&10&18&8&&&&\\\hline
3&62& 154& 124& 32&&&\\\hline
4&462& 1426& 1596& 760& 128&&\\\hline
5&3982& 14506& 20380& 13680& 4336& 512&\\\hline
6&38646& 161042& 269284& 229448& 104032& 23520& 2048\\\hline\end{array}
\]
\caption{First values of $d_{p,q}$.\label{dpq}}
\end{figure}
The number $\alpha_p$ of B-diagrams having exactly $p$ half edges is given by
\begin{equation}\label{countdiag}
\alpha_p=\sum_{q=0}^pd_{p,q}.
\end{equation}
The first values (see sequence A266093 in \cite{Sloane}) are
\[1, 4, 36, 372, 4372, 57396, 828020, 12962164, 218098356,\dots\]
\begin{example}\rm
Let us illustrate this enumeration by counting the B-diagrams of weight $3$. A B-diagram of weight $3$ is
\begin{enumerate}
\item either an elementary diagram of weight $3$ ($2^6$ possibilities),
\item or it is composed by an elementary diagram of weight $2$ and another elementary diagram of weight $1$. The two diagrams can be juxtaposed ($2^7$ possibilities) or branched $2^6$ possibilities.
\item or it is composed by three elementary diagrams of weight $1$. The three diagrams can be juxtaposed ($2^6$ possibilities), or two of the three diagrams are connected ($3\times 2^4$ possibilities), or the three diagrams are connected ($2^2$ possibilities).
\end{enumerate}
Hence, we obtain $2^6+2^7+2^6+2^6+3\times2^4+2^2=372$ as predicted by (\ref{countdiag}).
\end{example}

\section{Algebraic aspects of B-diagrams\label{algebra}}
\subsection{Fusion algebra}
We consider the alphabets \begin{itemize}\item $\mathcal A_{\langle\times}=\left\{\raa_{\left\langle\left(i_1\atop j_1\right)\cdots\left(i_k\atop j_k\right)\right.}:k\in\N, i_1,\dots,i_k,j_1,\dots,j_k\in\N\setminus\{0\}\right\}$,
\item $\mathcal A_{\rangle\times}=\left\{\raa_{\left\rangle\left(i_1\atop j_1\right)\cdots\left(i_k\atop j_k\right)\right.}:k\in\N, i_1,\dots,i_k,j_1,\dots,j_k\in\N\setminus\{0\}\right\}$,
\item $\mathcal A_{\times\langle}=\left\{\baa_{\left.\left(i_1\atop j_1\right)\cdots\left(i_k\atop j_k\right)\right\langle}:k\in\N, i_1,\dots,i_k,j_1,\dots,j_k\in\N\setminus\{0\}\right\}$,
\item and $\mathcal A_{\times\rangle}=\left\{\baa_{\left.\left(i_1\atop j_1\right)\cdots\left(i_k\atop j_k\right)\right\rangle}:k\in\N, i_1,\dots,i_k,j_1,\dots,j_k\in\N\setminus\{0\}\right\}$.
\end{itemize}
For simplicity, we also set  $\mathcal A_{\langle\ \rangle}=\mathcal A_{\langle\times}\cup\mathcal A_{\times\rangle}$,  $\mathcal A_{\rangle\ \langle}=\mathcal A_{\rangle\times}\cup\mathcal A_{\times\langle}$, $\mathcal A_{\bullet\times}=\mathcal A_{\langle\times}\cup\mathcal A_{\rangle\times}$, $\mathcal A_{\times\diamond}=\mathcal A_{\times\langle}\cup\mathcal A_{\times\rangle}$, and $\mathcal A=\mathcal A_{\langle\ \rangle}\cup\mathcal A_{\rangle\ \langle}$.
\begin{definition}\label{dfusion}
Let $\mathcal J_F$ be the ideal of $\mathbb C\langle \mathcal A\rangle$, the free associative algebra generated by $\mathcal A$, generated by the polynomials
\begin{enumerate}
\item \label{P1f}$[\mathtt e,\mathtt f]=\mathtt e\mathtt f-\mathtt f\mathtt e$ for any $\mathtt e\in \mathcal A_{\langle\ \rangle}$ and $\mathtt f\in\mathcal A$,
\item \label{P2f}$\mathtt e\mathtt f-\mathtt f\mathtt e$ for any $\mathtt e,\mathtt f\in\mathcal A_{\times\diamond}$ and any $\mathtt e,\mathtt f\in\mathcal A_{\bullet\times}$,
\item \label{P3f}$\mathtt u\raa_{\bullet\alpha}\mathtt v\baa_{\alpha \langle}\raa_{\rangle\beta}\mathtt x\baa_{\beta\diamond}\mathtt y-\mathtt u\raa_{\bullet\alpha}\mathtt v\raa_{\rangle\beta}\baa_{\alpha \langle}\mathtt x\baa_{\beta\diamond}\mathtt y-\mathtt u\raa_{\bullet\alpha\beta}\mathtt v\mathtt x\baa_{\alpha\beta\diamond}\mathtt y$ for any $\mathtt u,\mathtt v,\mathtt  x,\mathtt y\in\mathcal A^*$ (the free monoid generated by $\mathcal A$), $\alpha=\left(i_1\atop j_1\right)\cdots \left(i_k\atop j_k\right)$, $\beta=\left(i'_1\atop j'_1\right)\cdots \left(i'_{k'}\atop j'_{k'}\right)$ (with $k,k'\in\N$ and $i_1,\dots,i_k$, $j_1,\dots,j_k$, $i'_1,\dots,i'_{k'}$, $j'_1\dots,j'_{k'}\in\N\setminus \{0\}$), and $\bullet,\diamond\in\{\langle,\rangle\}$.
\end{enumerate}
We call \emph{fusion algebra} the quotient $\mathcal F:=\mathbb C\langle \mathcal A\rangle/_{\mathcal J_F}$.
\end{definition}
Remark that $\mathcal A_{\langle\ \rangle}$ is in the center of $\mathcal F$ and that the subalgebra of $\mathcal F$ generated by $\mathcal A_{\times\diamond}$ (resp. $\mathcal A_{\bullet\times}$) is commutative.\\
Using the rule \ref{P3f} of Definition \ref{dfusion}, we show  by induction the following result
\begin{lemma}
\begin{equation}\label{FP3}\begin{array}{l}
\raa_{\rangle\alpha_1}\dots \raa_{\rangle\alpha_n}\baa_{\alpha_1\langle}\dots \baa_{\alpha_n\langle}\raa_{\rangle\beta_1}\dots \raa_{\rangle\beta_m}\baa_{\beta_1\langle}\dots \baa_{\beta_m\langle}=\\\displaystyle \sum_{k=0}^{\min\{n,m\}}
\sum_{1\leq i_1<\dots<i_{k}\leq n\atop 1\leq j_1,\dots,j_{k}\leq m\mbox{ \tiny distinct}}P\left(i_1,\dots,i_{k}\atop j_1,\dots,j_{k}\right)\raa_{\rangle\alpha_{i_1}\beta_{i_1}}\dots
\raa_{\rangle\alpha_{i_{k}}\beta_{i_{k}}}
Q\left(i_1,\dots,i_{k}\atop j_1,\dots,j_{k}\right)
\baa_{\alpha_{i_1}\beta_{i_1}\langle}\dots
\baa_{\alpha_{i_{k}}\beta_{i_{k}}\langle}
\end{array},\end{equation}
where $P\left(i_1,\dots,i_{k}\atop j_1,\dots,j_{k}\right)=\prod_{j\not\in\{j_1,\dots,j_{k}\}}\raa_{\rangle\beta_j}\prod{i\not\in\{i_1,\dots,i_{k}\}}\raa_{\rangle\alpha_j}$ and\\ $Q\left(i_1,\dots,i_{k}\atop j_1,\dots,j_{k}\right)=\prod_{j\not\in\{j_1,\dots,j_{k}\}}\baa_{\beta_j\langle}\prod_{i\not\in\{i_1,\dots,i_{k}\}}\baa_{\alpha_j\langle}$.
\end{lemma}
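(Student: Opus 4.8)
The plan is to prove \textup{(\ref{FP3})} by induction on $n$, after rephrasing both sides combinatorially. First I would record two consequences of rule \ref{P2f} of Definition \ref{dfusion}: the generators $\raa_{\rangle\gamma}$ pairwise commute in $\mathcal F$ (they all lie in $\mathcal A_{\rangle\times}\subseteq\mathcal A_{\bullet\times}$), and likewise the generators $\baa_{\gamma\langle}$ pairwise commute (they lie in $\mathcal A_{\times\langle}\subseteq\mathcal A_{\times\diamond}$). Writing $\mathcal N(\gamma_1,\dots,\gamma_r):=\raa_{\rangle\gamma_1}\cdots\raa_{\rangle\gamma_r}\baa_{\gamma_1\langle}\cdots\baa_{\gamma_r\langle}$ for the \emph{normal form} of a list of colours, these commutations show that $\mathcal N(\gamma_1,\dots,\gamma_r)$ depends only on the underlying multiset of colours and that $\raa_{\rangle\delta}\,\mathcal N(\gamma_1,\dots,\gamma_r)\,\baa_{\delta\langle}=\mathcal N(\delta,\gamma_1,\dots,\gamma_r)$ for every colour $\delta$. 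The left-hand side of \textup{(\ref{FP3})} is $\mathcal N(\vec\alpha)\,\mathcal N(\vec\beta)$ with $\vec\alpha=(\alpha_1,\dots,\alpha_n)$, $\vec\beta=(\beta_1,\dots,\beta_m)$, and, again by commutativity of the $\raa$'s among themselves and of the $\baa$'s among themselves, its right-hand side is exactly $\sum_\sigma\mathcal N(\vec\gamma_\sigma)$, where $\sigma$ runs over all partial injections from a subset $S\subseteq\{1,\dots,n\}$ into $\{1,\dots,m\}$ (the data $i_1<\dots<i_k$ being $S$ and $j_1,\dots,j_k$ being $\sigma(i_1),\dots,\sigma(i_k)$) and $\vec\gamma_\sigma$ lists the colours $\alpha_i$ with $i\notin S$, the colours $\beta_j$ with $j$ not in the image of $\sigma$, and the fused colours $\alpha_i\beta_{\sigma(i)}$ with $i\in S$. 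So it suffices to prove $\mathcal N(\vec\alpha)\,\mathcal N(\vec\beta)=\sum_\sigma\mathcal N(\vec\gamma_\sigma)$.

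For $n=0$ this is trivial (both sides equal $\mathcal N(\vec\beta)$). For the inductive step I would single out the last letter $\baa_{\alpha_n\langle}$ of $\mathcal N(\vec\alpha)$, which in $\mathcal N(\vec\alpha)\,\mathcal N(\vec\beta)$ sits immediately to the left of $\raa_{\rangle\beta_1}$, and slide it rightward through $\raa_{\rangle\beta_1},\raa_{\rangle\beta_2},\dots$ by repeated use of rule \ref{P3f}, applied each time with $\raa_{\bullet\alpha}=\raa_{\rangle\alpha_n}$ (available in the leading block) and $\baa_{\beta\diamond}=\baa_{\beta_j\langle}$ (available in the trailing block). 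Each crossing with $\raa_{\rangle\beta_j}$ splits the current term into a \emph{commuting} part, in which $\baa_{\alpha_n\langle}$ simply passes $\raa_{\rangle\beta_j}$, and a \emph{fusing} part, in which $\baa_{\alpha_n\langle}$ and $\raa_{\rangle\beta_j}$ are deleted while $\raa_{\rangle\alpha_n}$ becomes $\raa_{\rangle\alpha_n\beta_j}$ and $\baa_{\beta_j\langle}$ becomes $\baa_{\alpha_n\beta_j\langle}$. Iterating, $\mathcal N(\vec\alpha)\,\mathcal N(\vec\beta)$ becomes the sum of one \emph{no-fusion} term (in which $\baa_{\alpha_n\langle}$ has commuted all the way past $\raa_{\rangle\beta_m}$) and, for each $j\in\{1,\dots,m\}$, one \emph{fuse-with-$\beta_j$} term. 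Reorganising each of these using only rule \ref{P2f} (moving $\raa$'s past $\raa$'s, $\baa$'s past $\baa$'s), the no-fusion term equals $\raa_{\rangle\alpha_n}\bigl[\mathcal N(\alpha_1,\dots,\alpha_{n-1})\,\mathcal N(\vec\beta)\bigr]\baa_{\alpha_n\langle}$ and the fuse-with-$\beta_j$ term equals $\raa_{\rangle\alpha_n\beta_j}\bigl[\mathcal N(\alpha_1,\dots,\alpha_{n-1})\,\mathcal N(\beta_1,\dots,\widehat{\beta_j},\dots,\beta_m)\bigr]\baa_{\alpha_n\beta_j\langle}$.

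Now I would apply the induction hypothesis to the two bracketed products (each involving only $n-1$ of the $\alpha$'s) and use $\raa_{\rangle\delta}\,\mathcal N(\vec\gamma)\,\baa_{\delta\langle}=\mathcal N(\delta,\vec\gamma)$: the no-fusion term becomes $\sum_\tau\mathcal N(\alpha_n,\vec\gamma_\tau)$ over partial injections $\tau$ of a subset of $\{1,\dots,n-1\}$ into $\{1,\dots,m\}$, which are precisely the $\mathcal N(\vec\gamma_\sigma)$ with $n\notin S$; and the fuse-with-$\beta_j$ term becomes $\sum_\tau\mathcal N(\alpha_n\beta_j,\vec\gamma_\tau)$ over partial injections $\tau$ of a subset of $\{1,\dots,n-1\}$ into $\{1,\dots,m\}\setminus\{j\}$, which are precisely the $\mathcal N(\vec\gamma_\sigma)$ with $n\in S$ and $\sigma(n)=j$. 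Since the no-fusion case together with the $m$ choices of $j$ partitions the set of all partial injections $\sigma$ from a subset of $\{1,\dots,n\}$ into $\{1,\dots,m\}$, summing everything up yields $\sum_\sigma\mathcal N(\vec\gamma_\sigma)$, which is the right-hand side of \textup{(\ref{FP3})}.

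The step I expect to need the most care is the control of the slide: one must verify that at every crossing the context letters required by rule \ref{P3f} are genuinely present — $\raa_{\rangle\alpha_n}$ stays untouched until the fusion step that consumes it, and the whole block $\baa_{\beta_1\langle}\cdots\baa_{\beta_m\langle}$ stays untouched throughout — and that the only adjacency of the shape $\baa_{\cdot\langle}\raa_{\rangle\cdot}$ ever created during the slide is the one involving $\baa_{\alpha_n\langle}$, so that no spurious extra fusion can occur; one must also track the concatenation order so that the fused colour is $\alpha_n\beta_j$ and not $\beta_j\alpha_n$, which is exactly what rule \ref{P3f} prescribes. Finally, one should note that all the rearrangements used to bring the intermediate words into the bracketed shapes above are legitimate instances of rule \ref{P2f}, since every letter involved lies in $\mathcal A_{\rangle\times}$ or in $\mathcal A_{\times\langle}$.
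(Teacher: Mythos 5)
Your proof is correct and is essentially the paper's own argument in mirror image: the paper inducts on $m$, peeling off $\raa_{\rangle\beta_1}$ and letting it either commute past or fuse with one of the $n$ letters $\baa_{\alpha_i\langle}$ via rule \ref{P3f}, whereas you induct on $n$ and slide $\baa_{\alpha_n\langle}$ through the $\raa_{\rangle\beta_j}$'s, but the ``commute or fuse'' dichotomy, the reorganisation by rule \ref{P2f}, and the bookkeeping of partial injections are identical. The extra cautions you raise at the end (presence of the context letters, order of the fused colour) are indeed the only points needing care, and your handling of them is sound.
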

\begin{proof}
We proceed by induction on $m$. If $m=0$ then the result is obvious. Otherwise applying successively many times the rule \ref{P3f} of Definition \ref{dfusion} and hence the rule \ref{P2f}, one obtains
\begin{equation}\begin{array}{l}\displaystyle
\raa_{\rangle\alpha_1}\dots \raa_{\rangle\alpha_n}\baa_{\alpha_1\langle}\dots \baa_{\alpha_n\langle}\raa_{\rangle\beta_1}\dots \raa_{\rangle\beta_m}\baa_{\beta_1\langle}\dots \baa_{\beta_m\langle}
=\raa_{\rangle\beta_1}
\prod_{i=1}^n\raa_{\rangle\alpha_i}
\prod_{i=1}^n\baa_{\alpha_i\langle}
\prod_{j=2}^n\raa_{\rangle\beta_j}
\prod_{j=2}^n\baa_{\beta_j\langle}\baa_{\beta_1\langle}\\
+\displaystyle\sum_{i=1}^n\raa_{\rangle\alpha_{i}\beta_1}
\prod_{j\neq i}\raa_{\rangle\alpha_j}
\prod_{j\neq i}\baa_{\alpha_j\langle}
\prod_{j=2}^m\raa_{\rangle\beta_j}
\prod_{j=2}^m\baa_{\beta_j\langle}\baa_{\alpha_i\beta_1\langle}.
\end{array}
\end{equation}
Hence, applying the induction hypothesis to  each term
\begin{equation}
\displaystyle  \prod_{j=1}^n\raa_{\rangle\alpha_i}\prod_{j=1}^n\baa_{\alpha_i\langle}\prod_{j=2}^n
\raa_{\rangle\beta_i}\prod_{j=2}^n\baa_{\beta_i\langle}\mbox{ and
}
\prod_{j\neq i}\raa_{\rangle\alpha_j}
\prod_{j\neq i}\baa_{\alpha_j\langle}\prod_{j=2}^m\raa_{\rangle\beta_j}\prod_{j=2}^m
\baa_{\beta_j\langle}\baa_{\alpha_i\beta_1\langle},\end{equation}
for $1\leq i\leq n$, and again the rule \ref{P2f} of Definition \ref{dfusion} for writing each factor in the suitable order, we find the result.
\end{proof}

Whilst Formula (\ref{FP3}) seems rather technical, it is easy to understand. Indeed, it means that each of the letters $\baa_{\alpha_1\langle},\dots, \baa_{\alpha_k\langle}$ can be paired or not with any of the letters $\raa_{\rangle\beta_1},\dots, \raa_{\rangle\beta_\ell}$.
\begin{example}\rm
Let us illustrate Formula (\ref{FP3}) on the following example
\[\begin{array}{rcl}
\raa_{\rangle\alpha_1}\raa_{\rangle\alpha_2}\baa_{\alpha_1\langle}\baa_{\alpha_2\langle}\raa_{\rangle\beta_1}\raa_{\rangle\beta_2}
\baa_{\beta_1\langle}\baa_{\beta_2\langle}&=&
\raa_{\rangle\beta_1}\raa_{\rangle\beta_2}\raa_{\rangle\alpha_1}\raa_{\rangle\alpha_2}\baa_{\beta_1\langle}\baa_{\beta_2\langle}\baa_{\alpha_1\langle}\baa_{\alpha_2\langle}\\
&&+\raa_{\rangle\beta_2}\raa_{\rangle\alpha_2}\raa_{\rangle\alpha_1\beta_1}\baa_{\beta_2\langle}\baa_{\alpha_2\langle}\baa_{\alpha_1\beta_1\langle}
+\raa_{\rangle\beta_2}\raa_{\rangle\alpha_1}\raa_{\rangle\alpha_2\beta_1} \baa_{\beta_2\langle}\baa_{\alpha_1\langle}\baa_{\alpha_2\beta_1\langle}
\\&&+\raa_{\rangle\beta_1}\raa_{\rangle\alpha_2}\raa_{\rangle\alpha_1\beta_2}  \baa_{\beta_1\langle}\baa_{\alpha_2\langle}\baa_{\alpha_1\beta_2\langle}
+\raa_{\rangle\beta_1}\raa_{\rangle\alpha_1}\raa_{\rangle\alpha_2\beta_2}  \baa_{\beta_1\langle}\baa_{\alpha_1\langle}\baa_{\alpha_2\beta_2\langle}
\\&&+\raa_{\rangle\alpha_1\beta_1}\raa_{\rangle\alpha_2\beta_2}\baa_{\alpha_1\beta_1\langle}\baa_{\alpha_2\beta_2\langle}+\raa_{\rangle\alpha_2\beta_1}\raa_{\rangle\alpha_1\beta_2}
\baa_{\alpha_2\beta_1\langle}\baa_{\alpha_1\beta_2\langle}
\end{array}.
\]
\end{example}

Now, we define $\widetilde{\mathcal F}$ as the subspace of $\mathcal F$ generated by the elements under the form $$\raa_{\bullet_1\alpha_1}\dots \raa_{\bullet_k\alpha_k}
\baa_{\alpha_1\diamond_1}\dots \baa_{\alpha_k\diamond_k},$$ with $\bullet_i,\diamond_i\in\{\langle, \rangle\}$, for any $1\leq i\leq k$ and $k\in\mathbb N$.
\begin{proposition}
 $\widetilde{\mathcal F}$ is a subalgebra of  ${\mathcal F}$.
\end{proposition}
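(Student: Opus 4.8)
The plan is to show that $\widetilde{\mathcal F}$, which is a linear subspace of $\mathcal F$ by definition, is closed under multiplication; containment of the unit is clear since the empty word (the case $k=0$) is one of the spanning elements. So I would fix two spanning elements $w=\raa_{\bullet_1\alpha_1}\cdots\raa_{\bullet_k\alpha_k}\baa_{\alpha_1\diamond_1}\cdots\baa_{\alpha_k\diamond_k}$ and $w'=\raa_{\bullet'_1\gamma_1}\cdots\raa_{\bullet'_m\gamma_m}\baa_{\gamma_1\diamond'_1}\cdots\baa_{\gamma_m\diamond'_m}$ and study the product $ww'$. Cutting each of $w,w'$ between its red block and its blue block, write $w=R_wB_w$ and $w'=R_{w'}B_{w'}$; then $ww'=R_w\,(B_wR_{w'})\,B_{w'}$, and the only obstruction to $ww'$ having the shape ``all reds, then all blues'' sits in the middle factor $B_wR_{w'}$, where blue letters precede red letters.

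First I would dispose of the central letters. By rule \ref{P1f} of Definition \ref{dfusion} every letter $\raa_{\langle\alpha}$ and every letter $\baa_{\alpha\rangle}$ is central in $\mathcal F$, and by rule \ref{P2f} any two red letters commute and any two blue letters commute. Hence I may move all central red letters occurring in $ww'$ (those $\raa_{\bullet_i\alpha_i}$ with $\bullet_i=\langle$ and those $\raa_{\bullet'_j\gamma_j}$ with $\bullet'_j=\langle$) to the extreme left and all central blue letters to the extreme right, without altering the multiset of subscripts carried by the red letters or the one carried by the blue letters. After this rearrangement, the only blue-before-red adjacency left in $ww'$ is that of the non-central blue letters of $w$ standing to the left of the non-central red letters of $w'$; everything that remains is to commute those non-central reds of $w'$ leftwards past those non-central blues of $w$.

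The core step is to perform this last reordering with rule \ref{P3f}, running exactly the induction used to prove formula (\ref{FP3}). What makes \ref{P3f} applicable at every stage is that, in a spanning element, each blue letter $\baa_{\alpha_i\langle}$ keeps a partner red $\raa_{\bullet_i\alpha_i}$ somewhere to its left (either among the central reds now parked on the far left, or in the already-reordered red prefix, possibly with an enlarged subscript) and, dually, each red letter $\raa_{\rangle\gamma_j}$ keeps a partner blue $\baa_{\gamma_j\diamond'_j}$ somewhere to its right; these partners are precisely the side letters $\raa_{\bullet\alpha}$ and $\baa_{\beta\diamond}$ required in the statement of \ref{P3f}. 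Running the induction, $ww'$ expands as a sum indexed by partial matchings between the non-central blue letters of $w$ and the non-central red letters of $w'$: a matched pair is annihilated and the concatenated subscript is absorbed by its partner red (coming from $w$) and partner blue (coming from $w'$), while unmatched letters survive, with reds pushed left and blues pushed right. The one invariant to keep checking throughout is that in every word produced the multiset of subscripts appearing on red letters equals the multiset of subscripts appearing on blue letters: this holds for $w$ and $w'$ by their very shape, it is trivially preserved by commutations, and it is preserved by \ref{P3f} because a fusion replaces $\{\alpha,\beta\}$ by $\{\alpha\beta\}$ simultaneously on the red side (partner red $\alpha\mapsto\alpha\beta$, fused red $\raa_{\rangle\beta}$ deleted) and on the blue side (partner blue $\beta\mapsto\alpha\beta$, fused blue $\baa_{\alpha\langle}$ deleted). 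Thus every term in the final expansion of $ww'$ is a word of the form ``all reds, then all blues'' whose red-subscript multiset and blue-subscript multiset coincide; re-sorting the reds and the blues independently (allowed by \ref{P2f}) so that the $\ell$-th red and the $\ell$-th blue carry the same subscript, each such term is one of the spanning elements of $\widetilde{\mathcal F}$, whence $ww'\in\widetilde{\mathcal F}$.

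The part I expect to be the actual work is this last induction: checking that the blue-before-red portion of $ww'$ really terminates in words of the claimed shape and, above all, that the partner letters remain available and are consumed consistently (a blue of $w$ is fused at most once, a red of $w'$ at most once, so one indeed gets a partial matching), together with the subscript-multiset invariant. This is morally the same argument already carried out for (\ref{FP3}); the new feature, which prevents a literal appeal to that formula, is only that here the decorations $\bullet_i,\diamond_i$ need not agree and some letters are central, so one must re-run the proof of the lemma with this slightly looser bookkeeping. An alternative that avoids re-running anything is to first prove the analogue of (\ref{FP3}) allowing arbitrary decorations $\bullet_i,\diamond_i\in\{\langle,\rangle\}$ and arbitrary, not necessarily matched, subscript patterns, and then quote it directly.
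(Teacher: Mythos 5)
Your proposal is correct and follows essentially the same route as the paper: park the central letters (and, with them, the pairs they belong to) at the two ends using rules \ref{P1f} and \ref{P2f}, reduce to a middle factor where only non-cut blues of the first word precede non-cut reds of the second, and resolve that factor by the matching expansion of Lemma (\ref{FP3}). Your remark that (\ref{FP3}) must be applied with arbitrary decorations $\bullet_i,\diamond_i$ on the partner letters is a point the paper passes over silently ("a direct consequence of Equality (\ref{FP3})"), but as you note the induction goes through verbatim since rule \ref{P3f} already allows arbitrary $\bullet,\diamond$ there.
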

\begin{proof}
It suffices to prove that is stable for the product. Let $\mathtt u=\raa_{\bullet_1\alpha_1}\dots \raa_{\bullet_k\alpha_k}
\baa_{\alpha_1\diamond_1}\dots \baa_{\alpha_k\diamond_k}$ and $\mathtt v=\raa_{\bullet'_1\alpha'_1}\dots \raa_{\bullet'_{k'}\alpha'_{k'}}
\baa_{\alpha'_1\diamond'_1}\dots \baa_{\alpha'_{k'}\diamond_{k'}}$. Let $i_1,\dots,i_\ell$ be the indices such that $\diamond_{i_1}=\dots=\diamond_{i_\ell}=\rangle$ and $\{j_1,\dots,j_{k-\ell}\}=\{1,\dots,k\}\setminus \{i_1,\dots,i_\ell\}$. In the same way, let $i'_1,\dots,i'_{\ell'}$ be the indices such that $\bullet_{i'_1}=\dots=\bullet_{i'_{\ell'}}=\langle$ and $\{j'_1,\dots,j'_{k'-\ell'}\}=\{1,\dots,k'\}\setminus \{i'_1,\dots,i'_{\ell'}\}$.
From the relations \ref{P1f} and \ref{P2f} of Definition  \ref{dfusion}, one has
\begin{equation}\mathtt u\mathtt v=\raa_{\bullet_{i_1}\alpha_{i_1}}\dots \raa_{\bullet_{i_{\ell}}\alpha_{i_\ell}}
\raa_{\langle\alpha_{i'_1}}\dots \raa_{\langle\alpha_{i'_{\ell'}}}\mathtt u'\mathtt v'
\baa_{\alpha_{i'_1}\diamond_{i'_1}}\dots \baa_{\alpha_{i'_{\ell'}}\diamond_{i'_{\ell'}}}
\baa_{\alpha_{i_1}\rangle}\dots \baa_{\alpha_{i_\ell}\rangle},
\end{equation}
with $\mathtt u'=\raa_{\bullet_{j_1}\alpha_{j_1}}\dots \raa_{\bullet_{j_{k-\ell}}\alpha_{i_{k-\ell}}} \baa_{\alpha_{j_1}\langle}\dots \baa_{\alpha_{j_{k-\ell}}\langle}$
and $\mathtt v'=\raa_{\rangle\alpha'_{j'_1}}\dots \raa_{\rangle\alpha_{j'_{k'-\ell'}}}
\baa_{\alpha'_{j'_1}\diamond_{j'_1}}\dots \baa_{\alpha_{j'_{k'-\ell'}}\diamond_{j'_{k'-\ell'}}}$.\\
Hence, one has only to prove that $\mathtt u'\mathtt  v'\in \widetilde{\mathcal F}$ which is a direct consequence of Equality (\ref{FP3}).
\end{proof}
\begin{example}\rm
We consider the element $\mathtt w_1=\raa_{\left\rangle\left(1\atop 1\right)\right.}\raa_{\left\rangle\left(1\atop 2\right)\right.}\baa_{\left(1\atop 1\right)\langle}\baa_{\left.\left(1\atop 2\right)\right\rangle},\ \mathtt w_2=
\raa_{\left\rangle\left(2\atop 1\right)\right.}\raa_{\left\rangle\left(2\atop 2\right)\right.}\baa_{\left.\left(2\atop 1\right)\right\langle}\baa_{\left.\left(2\atop 2\right)\right\rangle}\in\widetilde{\mathcal F}$ and $\mathtt w=\mathtt w_1\mathtt w_2$.
First from the point (\ref{P1f}) of Definition \ref{dfusion}, $\baa_{\left.\left(1\atop 2\right)\right\rangle}$ is in the center of the algebra $\mathcal F$. Hence,
\[
\mathtt w=\raa_{\left\rangle\left(1\atop 1\right)\right.}\raa_{\left\rangle\left(1\atop 2\right)\right.}
\baa_{\left(1\atop 1\right)\langle}
\raa_{\left\rangle\left(2\atop 1\right)\right.}
\raa_{\left\rangle\left(2\atop 2\right)\right.}\baa_{\left.\left(2\atop 1\right)\right\langle}{\baa_{\left.\left(1\atop 2\right)\right\rangle}}\baa_{\left.\left(2\atop 2\right)\right\rangle}.
\]
Now we use the point (\ref{P3f}) and compute
\[\begin{array}{rcl}
\mathtt w&=&\overbrace{\ }^{\mathtt u}\raa_{\left\rangle\left(1\atop 1\right)\right.}\overbrace{\raa_{\left\rangle\left(1\atop 2\right)\right.}}^{\mathtt v}
\baa_{\left(1\atop 1\right)\langle}
\raa_{\left\rangle\left(2\atop 1\right)\right.}
\overbrace{\raa_{\left\rangle\left(2\atop 2\right)\right.}}^{\mathtt x}\baa_{\left.\left(2\atop 1\right)\right\langle}
\overbrace{\baa_{\left.\left(1\atop 2\right)\right\rangle}
\baa_{\left.\left(2\atop 2\right)\right\rangle}}^{\mathtt y}\\
&=&\mathtt u\raa_{\left\rangle\left(1\atop 1\right)\right.}\mathtt v\raa_{\left\rangle\left(2\atop 1\right)\right.}\baa_{\left(1\atop 1\right)\langle}\mathtt x\baa_{\left.\left(2\atop 1\right)\right\langle}\mathtt y
+
\mathtt u\raa_{\left\rangle\left(1\atop 1\right)\left(2\atop 1\right)\right.}\mathtt v\mathtt x\baa_{\left.\left(1\atop 1\right)\left(2\atop 1\right)\right\rangle}\mathtt y
\\&=&
\raa_{\left\rangle\left(1\atop 1\right)\right.}\raa_{\left\rangle\left(1\atop 2\right)\right.}
\raa_{\left\rangle\left(2\atop 1\right)\right.}\baa_{\left(1\atop 1\right)\langle}
\raa_{\left\rangle\left(2\atop 2\right)\right.}\baa_{\left.\left(2\atop 1\right)\right\langle}\baa_{\left.\left(1\atop 2\right)\right\rangle}\baa_{\left.\left(2\atop 2\right)\right\rangle}+
\raa_{\left\rangle\left(1\atop 1\right)\left(2\atop 1\right)\right.}
\raa_{\left\rangle\left(1\atop 2\right)\right.}
\raa_{\left\rangle\left(2\atop 2\right)\right.}
\baa_{\left.\left(1\atop 1\right)\left(2\atop 1\right)\right\rangle}
\baa_{\left.\left(1\atop 2\right)\right\rangle}\baa_{\left.\left(2\atop 2\right)\right\rangle}.\end{array}
\]

We use again the point (\ref{P3f}) on the first terms of the last sum and find
\[\begin{array}{rcl}\mathtt w&=&
\raa_{\left\rangle\left(1\atop 1\right)\right.}\raa_{\left\rangle\left(1\atop 2\right)\right.}
\raa_{\left\rangle\left(2\atop 1\right)\right.}\raa_{\left\rangle\left(2\atop 2\right)\right.}\baa_{\left(1\atop 1\right)\langle}\baa_{\left.\left(2\atop 1\right)\right\langle}\baa_{\left.\left(1\atop 2\right)\right\rangle}\baa_{\left.\left(2\atop 2\right)\right\rangle}+
\raa_{\left\rangle\left(1\atop 1\right)\left(2\atop 2\right)\right.}\raa_{\left\rangle\left(1\atop 2\right)\right.}
\raa_{\left\rangle\left(2\atop 1\right)\right.}\baa_{\left.\left(2\atop 1\right)\right\langle}\baa_{\left.\left(1\atop 2\right)\right\langle}\baa_{\left.\left(1\atop1\right)\left(2\atop 2\right)\right\rangle}\\&&
+
\raa_{\left\rangle\left(1\atop 1\right)\left(2\atop 1\right)\right.}\raa_{\left\rangle\left(1\atop 2\right)\right.}
\raa_{\left\rangle\left(2\atop 2\right)\right.} \baa_{\left.\left(1\atop 1\right)\left(2\atop 1\right)\right\langle}\baa_{\left.\left(1\atop 2\right)\right\rangle}\baa_{\left.\left(2\atop 2\right)\right\rangle}.
\end{array}
\]
Finally, using the point (\ref{P2f}) of Definition \ref{dfusion}, we show $w\in\widetilde{\mathcal F}$. Indeed,
\[\begin{array}{rcl}\mathtt w&=&
\raa_{\left\rangle\left(1\atop 1\right)\right.}
\raa_{\left\rangle\left(1\atop 2\right)\right.}
\raa_{\left\rangle\left(2\atop 1\right)\right.}
\raa_{\left\rangle\left(2\atop 2\right)\right.}
\baa_{\left(1\atop 1\right)\langle}
\baa_{\left.\left(1\atop 2\right)\right\rangle}
\baa_{\left.\left(2\atop 1\right)\right\langle}
\baa_{\left.\left(2\atop 2\right)\right\rangle}+
\raa_{\left\rangle\left(1\atop 1\right)\left(2\atop 2\right)\right.}
\raa_{\left\rangle\left(1\atop 2\right)\right.}
\raa_{\left\rangle\left(2\atop 1\right)\right.}
\baa_{\left.\left(2\atop 1\right)\right\langle}
\baa_{\left.\left(1\atop1\right)\left(2\atop 2\right)\right\rangle}
\baa_{\left.\left(1\atop 2\right)\right\rangle}
\\&&
+
\raa_{\left\rangle\left(1\atop 1\right)\left(2\atop 1\right)\right.}
\raa_{\left\rangle\left(1\atop 2\right)\right.}
\raa_{\left\rangle\left(2\atop 2\right)\right.} 
\baa_{\left.\left(1\atop 1\right)\left(2\atop 1\right)\right\langle}
\baa_{\left.\left(1\atop 2\right)\right\rangle}
\baa_{\left.\left(2\atop 2\right)\right\rangle}.
\end{array}\]
\end{example}

To each element $\mathtt b\in\mathcal A$ we define $|\mathtt b|:=\max\{i_\ell:1\leq\ell\leq k\}$ and $\omega(n):=\max\{j_\ell:1\leq\ell\leq k\}$ for any $\mathtt b\in\mathcal A$ with $\mathtt b=\raa_{\bullet\left(i_1\atop j_1\right)\cdots \left(i_1\atop j_1\right)}$ or $\mathtt b=\baa_{\left(i_1\atop j_1\right)\cdots \left(i_1\atop j_1\right)\diamond}$ ($\bullet,\diamond\in\{\langle,\rangle\}$). Also we set, for any word $\mathtt w$ in $\mathcal A^*$, $|\mathtt w|=\omega(\mathtt w)=0$ if $\mathtt w$ is the empty word, $|\mathtt w|=\max\{|\mathtt u|,|\mathtt b|\}$ and $\omega(\mathtt w)=\max\{\omega(\mathtt u),\omega(\mathtt b)\}$ for $\mathtt w=\mathtt u\mathtt b$ with $\mathtt b\in\mathcal A$.\\
We define on $\mathbb C\langle \mathcal A\rangle$ the shifted product as the only associative product satisfying, for each $\mathtt u,\mathtt v\in\mathcal A^*$,  $\mathtt u\star\mathtt  v=\mathtt u\mathtt v[|\mathtt u|,\omega(\mathtt u)]$, where $\raa_{\bullet \left(i_1\atop j_1\right)\cdots \left(i_k\atop j_k\right)}[m,n]=\raa_{\bullet \left(i_1+n\atop j_1+m\right)\cdots \left(i_k+n\atop j_k+m\right)}$, $\baa_{\left(i_1\atop j_1\right)\cdots \left(i_k\atop j_k\right)\diamond}[m,n]=\baa_{\left(i_1+n\atop j_1+m\right)\cdots \left(i_k+n\atop j_k+m\right)\diamond}$ ($\bullet,\diamond\in\{\langle,\rangle\}$), and $\mathtt w[n,m]=\mathtt u[n,m]\mathtt b[n,m]$ if $\mathtt u\in\mathcal A^*$ and $\mathtt b\in\mathcal A$.
\begin{claim}
\begin{enumerate}
\item Let $P\in\mathcal J_F$. For any $Q\in\mathbb C\langle\mathcal A\rangle$ we have $P\star Q, Q\star P\in \mathcal J_F$. In consequence, the operation $\star$ is well defined on $\mathcal F$.
\item If $P, Q\in \widetilde{\mathcal F}$ then $P\star Q\in \widetilde{\mathcal F}$.
\end{enumerate}
\end{claim}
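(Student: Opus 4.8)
The plan is to reduce both parts to two structural facts about the shift maps $[m,n]$ and the bidegree $(|\cdot|,\omega(\cdot))$. First, since the shift $[m,n]$ is defined letterwise, it extends to a unital algebra endomorphism of $(\mathbb C\langle\mathcal A\rangle,\cdot)$ which moreover preserves the decoration types (it sends $\mathcal A_{\langle\times}$ to $\mathcal A_{\langle\times}$, and so on); and directly from the definition of the shifted product, a word $\mathtt u$ satisfies $\mathtt u\star R=\mathtt u\cdot(R[|\mathtt u|,\omega(\mathtt u)])$ for every $R\in\mathbb C\langle\mathcal A\rangle$. Second, on words the bidegree is the coordinatewise maximum over the letters, so that $\mathbb C\langle\mathcal A\rangle=\bigoplus_{(m,n)}V_{m,n}$ as a vector space, where $V_{m,n}$ is spanned by the words of bidegree exactly $(m,n)$, with $V_{m,n}\cdot V_{m',n'}\subseteq V_{\max(m,m'),\max(n,n')}$.

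The heart of the matter is the claim that $\mathcal J_F$ is a homogeneous subspace of $\bigoplus_{(m,n)}V_{m,n}$ and is stable under every shift $[m,n]$. For homogeneity I would check that each generator in Definition~\ref{dfusion} is bidegree-homogeneous: for \ref{P1f} and \ref{P2f} the two monomials differ only by transposing two letters, so they have equal bidegree; for \ref{P3f} the first two monomials again differ by a transposition, while the third replaces the pair $\raa_{\bullet\alpha},\raa_{\rangle\beta}$ by the single fused letter $\raa_{\bullet\alpha\beta}$ (and $\baa_{\alpha\langle},\baa_{\beta\diamond}$ by $\baa_{\alpha\beta\diamond}$), which leaves the coordinatewise maximum untouched since $|\raa_{\bullet\alpha\beta}|=\max\{|\raa_{\bullet\alpha}|,|\raa_{\rangle\beta}|\}$ and likewise for $\omega$. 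Hence every element $\mathtt p\,G\,\mathtt q$ (word times generator times word) is homogeneous, and as $\mathcal J_F$ is the span of such elements, it is homogeneous. For shift-stability I would observe that $G[m,n]$ is again a generator of the same type as $G$ (the matching of the arguments $\alpha,\beta$ between the $\raa$- and $\baa$-letters is preserved, and the shifted indices stay positive), so that $\mathtt p[m,n]\,G[m,n]\,\mathtt q[m,n]\in\mathcal J_F$, whence $\mathcal J_F[m,n]\subseteq\mathcal J_F$.

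Part~1 then follows. Writing $P=\sum_i c_i\mathtt u_i$ as a combination of words and letting $P_{m,n}$ denote its bidegree-$(m,n)$ component (which lies in $\mathcal J_F$ by homogeneity), one gets $P\star Q=\sum_{(m,n)}P_{m,n}\cdot(Q[m,n])\in\mathcal J_F$, because $\mathcal J_F$ is a two-sided ideal for $\cdot$; and expanding $Q=\sum_j d_j\mathtt v_j$ gives $Q\star P=\sum_j d_j\,\mathtt v_j\cdot\bigl(P[|\mathtt v_j|,\omega(\mathtt v_j)]\bigr)\in\mathcal J_F$ by shift-stability. Thus $\mathcal J_F$ is also a two-sided ideal for $\star$, so $\star$ descends to a well-defined (associative) operation on $\mathcal F=\mathbb C\langle\mathcal A\rangle/\mathcal J_F$. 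For Part~2, by bilinearity it is enough to take classes of words $\mathtt u=\raa_{\bullet_1\alpha_1}\cdots\raa_{\bullet_k\alpha_k}\baa_{\alpha_1\diamond_1}\cdots\baa_{\alpha_k\diamond_k}$ and $\mathtt v$ of the same shape; then $\mathtt u\star\mathtt v=\mathtt u\cdot(\mathtt v[|\mathtt u|,\omega(\mathtt u)])$, and $\mathtt v[|\mathtt u|,\omega(\mathtt u)]$ is once more a word of the defining form of $\widetilde{\mathcal F}$ (the shift only relabels the $\alpha_i$, leaving the pattern of $\raa$-letters followed by the matching $\baa$-letters intact). So the right-hand side is an ordinary product of two elements of $\widetilde{\mathcal F}$, which lies in $\widetilde{\mathcal F}$ by the Proposition already established that $\widetilde{\mathcal F}$ is a subalgebra of $(\mathcal F,\cdot)$; bilinearity gives the general case.

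I do not anticipate a genuine difficulty here: the only place calling for an actual computation is the bidegree-homogeneity of the \ref{P3f} generators, and even there the content is simply that fusing $\alpha$ and $\beta$ into one letter does not change the coordinatewise maxima. All the rest is bookkeeping with the endomorphisms $[m,n]$ and the max-grading, plus the already proved closure of $\widetilde{\mathcal F}$ under $\cdot$.
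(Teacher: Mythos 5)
Your proof is correct. There is nothing in the paper to compare it against: the statement is given as a bare Claim, asserted as an easy check and followed immediately by ``These properties implies that we can endow the space $\mathcal F$ with the associative product $\star$,'' so your write-up supplies a verification the authors omitted. Moreover, you correctly isolate the one point that is not pure bookkeeping. Because distinct monomials of $P$ induce distinct shifts of $Q$, one cannot write $P\star Q$ as a single concatenation product; your decomposition $P\star Q=\sum_{(m,n)}P_{m,n}\cdot Q[m,n]$ only helps because each component $P_{m,n}$ again lies in $\mathcal J_F$, and that is exactly the bihomogeneity of $\mathcal J_F$ for the bidegree $(|\cdot|,\omega(\cdot))$. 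This in turn reduces to the observation that in the third family of generators of $\mathcal J_F$ the fused letters $\raa_{\bullet\alpha\beta}$ and $\baa_{\alpha\beta\diamond}$ carry the coordinatewise maxima of the indices of the pairs they replace, so all three monomials share the same bidegree --- the computation you flag as the only real content. Combined with the stability of $\mathcal J_F$ under the letterwise endomorphisms $[m,n]$ (which handles $Q\star P$, since the shift sends each generator to a generator of the same type) and with the previously proved closure of $\widetilde{\mathcal F}$ under the ordinary product of $\mathcal F$ (which handles part 2, the shift merely relabelling the $\alpha_i$ without disturbing the required pattern), this yields a complete and correct proof of both assertions.
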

These properties implies that we can endow the space $\mathcal F$  with the associative product $\star$. Let  $\hat {\mathcal F}$ denote this algebra.
\begin{example}\label{ExFH}\rm
Consider the element $\mathtt w=
\raa_{\left\rangle\left(1\atop 1\right)\right.}
\raa_{\left\rangle\left(1\atop 2\right)\right.}
\baa_{\left(1\atop 1\right)\langle}
\baa_{\left.\left(1\atop 2\right)\right\rangle}
\star
\raa_{\left\rangle\left(1\atop 1\right)\right.}
\raa_{\left\rangle\left(1\atop 2\right)\right.}
\baa_{\left(1\atop 1\right)\langle}
\baa_{\left.\left(1\atop 2\right)\right\rangle}$.
One has
\[\begin{array}{rcl}
\mathtt w
&=&
\raa_{\left\rangle\left(1\atop 1\right)\right.}
\raa_{\left\rangle\left(1\atop 2\right)\right.}
\baa_{\left.\left(1\atop 1\right)\right\langle}
\baa_{\left.\left(1\atop 2\right)\right\rangle}
\raa_{\left\rangle\left(2\atop 3\right)\right.}
\raa_{\left\rangle\left(2\atop 4\right)\right.}
\baa_{\left.\left(2\atop 3\right)\right\langle}
\baa_{\left.\left(2\atop 4\right)\right\rangle}\\
&=&
\raa_{\left\rangle\left(1\atop 1\right)\right.}
\raa_{\left\rangle\left(1\atop 2\right)\right.}
\raa_{\left\rangle\left(2\atop 3\right)\right.}
\raa_{\left\rangle\left(2\atop 4\right)\right.}
\baa_{\left.\left(1\atop 1\right)\right\langle}
\baa_{\left.\left(1\atop 2\right)\right\rangle}
\baa_{\left.\left(2\atop 3\right)\right\langle}
\baa_{\left.\left(2\atop 4\right)\right\rangle}+
\raa_{\left\rangle\left(1\atop 1\right)\left(2\atop 4\right)\right.}
\raa_{\left\rangle\left(1\atop 2\right)\right.}
\raa_{\left\rangle\left(2\atop 3\right)\right.}
\baa_{\left.\left(1\atop1\right)\left(2\atop 4\right)\right\rangle}
\baa_{\left.\left(1\atop 2\right)\right\rangle}
\baa_{\left.\left(2\atop 3\right)\right\langle}\\&&
+
\raa_{\left\rangle\left(1\atop 1\right)\left(2\atop 3\right)\right.}
\raa_{\left\rangle\left(1\atop 2\right)\right.}
\raa_{\left\rangle\left(2\atop 4\right)\right.} 
\baa_{\left.\left(1\atop 1\right)\left(2\atop 3\right)\right\langle}
\baa_{\left.\left(1\atop 2\right)\right\rangle}
\baa_{\left.\left(2\atop 4\right)\right\rangle}.
\end{array}
\]
\end{example}

\subsection{The algebra of B-diagrams}
We consider the algebra $\B$ consisting in the space formally generated by the B-diagrams and endowed with the product $\star$ defined by
\begin{equation}\label{starHW}
G\star G'=\sum_{G''\in G\medstar G'}G''=\sum_{a_1<\dots<a_k\in H^\uparrow(G)\atop b_1,\dots,b_k\in H^\downarrow(G'),\mbox{ \tiny distinct}} \begin{array}{c}G'\\\mcomp{a_1,\dots,a_k}{b_1,\dots,b_k}\\G\end{array}.
\end{equation}
For simplicity, for any $1\leq a_1<\cdots<a_k\leq\omega(G)$ and $1\leq b_1,\dots,b_k\leq\omega(G')$ distinct, we set
\[
\begin{array}{c}G'\\\comp{a_1,\dots,a_k}{b_1,\dots,b_k}\\G\end{array}=\left\{\begin{array}{cl}
\begin{array}{c}G'\\\mcomp{a_1,\dots,a_k}{b_1,\dots,b_k}\\G\end{array}&\mbox{ if } a_1<\dots<a_k\in H^\uparrow(G) \mbox{ and } b_1,\dots,b_k\in H^\downarrow(G')\\0&\mbox{ otherwise}
\end{array}\right.
\]
With this notation we have
\begin{equation}\label{starHW2}
G\star G'=\sum_{1\leq a_1<\cdots<a_k\leq\omega(G)\atop 1\leq b_1,\dots,b_k\leq\omega(G'),\mbox{ \tiny distinct}} \begin{array}{c}G'\\\comp{a_1,\dots,a_k}{b_1,\dots,b_k}\\G\end{array}.
\end{equation}
We set $\mathcal B_k=\mathrm{span}\{G:\omega(G)=k\}$.  Note that $\mathcal B$ splits into the direct sum $\mathcal B=\bigoplus_k\mathcal B_k$ and the dimension of each space $\mathcal B_k$ is finite.\\
Straightforwardly from the definition, we check
\begin{claim}
$(\mathcal B,\star)$ is a graded algebra with finite dimensional graded component.
The unit of this algebra is the empty B-diagram $\varepsilon$.
\end{claim}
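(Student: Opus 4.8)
The plan is to establish the four assertions packaged in the statement: that $\star$ is compatible with the grading, that each $\mathcal B_k$ is finite dimensional, that $\varepsilon$ is a two-sided unit, and that $\star$ is associative; only the last is more than bookkeeping. For the grading, every B-diagram occurring in $G\medstar G'$ is, by the definition of composition, of the shape $(n+n',[\lambda_1,\dots,\lambda_n,\lambda'_1,\dots,\lambda'_{n'}],\dots)$, hence has weight $\omega(G)+\omega(G')$; so $\mathcal B_p\star\mathcal B_q\subseteq\mathcal B_{p+q}$ and, since $\omega(\varepsilon)=0$, the product respects $\mathcal B=\bigoplus_k\mathcal B_k$. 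For finite dimensionality, a B-diagram $G$ with $\omega(G)=k$ has $\lambda$ a composition of $k$ (hence at most $k$ parts), and $E^\uparrow,E^\downarrow\subseteq\llbracket 1,k\rrbracket$, $E\subseteq\llbracket 1,k\rrbracket\times\llbracket 1,k\rrbracket$; these finitely many data yield $\alpha_k$ diagrams as in (\ref{countdiag}). For the unit, $H_f^\uparrow(\varepsilon)=H_f^\downarrow(\varepsilon)=\emptyset$ kills every term with $k\geq 1$ in $G\star\varepsilon$ and in $\varepsilon\star G$, leaving the $k=0$ terms, which by inspection of the composition formula are $G|\varepsilon=\varepsilon|G=G$ (this is just the already recorded fact that $\varepsilon$ is the unit of the monoid $(\mathbb B,|)$).

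For associativity I would argue bijectively. I would first note that the index $(a_1<\dots<a_k;\,b_1,\dots,b_k)$ of a composition is the same datum as a partial injection $\phi$ of $H_f^\uparrow(G)$ into $H_f^\downarrow(G')$, and that the assignment $\phi\mapsto\begin{array}{c}G'\\\mcomp{\phi}{}\\G\end{array}$ is injective: in the resulting B-diagram the edges joining a half-edge label $\leq\omega(G)$ to a label $>\omega(G)$ are precisely those encoding $\phi$ (edges inherited from $G$ remain inside $\llbracket 1,\omega(G)\rrbracket$, those inherited from $G'$ inside $\llbracket\omega(G)+1,\omega(G)+\omega(G')\rrbracket$), so $\phi$ is recoverable. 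Hence $G\star G'=\sum_\phi\begin{array}{c}G'\\\mcomp{\phi}{}\\G\end{array}$ is a sum, with all multiplicities equal to $1$, of pairwise distinct B-diagrams.

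The next step is to expand both bracketings of $G_1\star G_2\star G_3$, writing $E_i$ for the edge set of $G_i$ and $m+S$ for $\{m+s:s\in S\}$. A summand of $(G_1\star G_2)\star G_3$ is indexed by a partial injection $\phi_{12}$ of $H_f^\uparrow(G_1)$ into $H_f^\downarrow(G_2)$, yielding an intermediate diagram with vertices $G_1$ then $G_2$, together with a partial injection of the free uncut outer half-edges of that intermediate diagram into $H_f^\downarrow(G_3)$. Since those free uncut outer half-edges are exactly $\big(H_f^\uparrow(G_1)\setminus\operatorname{dom}\phi_{12}\big)\sqcup\big(\omega(G_1)+H_f^\uparrow(G_2)\big)$, the second injection splits uniquely into partial injections $\psi_1$ of $H_f^\uparrow(G_1)$ into $H_f^\downarrow(G_3)$ and $\psi_2$ of $H_f^\uparrow(G_2)$ into $H_f^\downarrow(G_3)$, constrained by $\operatorname{dom}\psi_1\cap\operatorname{dom}\phi_{12}=\emptyset$ and $\operatorname{im}\psi_1\cap\operatorname{im}\psi_2=\emptyset$. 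Dually, a summand of $G_1\star(G_2\star G_3)$ is indexed by a partial injection $\phi_{23}$ of $H_f^\uparrow(G_2)$ into $H_f^\downarrow(G_3)$, whose intermediate diagram has free uncut inner half-edges $H_f^\downarrow(G_2)\sqcup\big(\omega(G_2)+(H_f^\downarrow(G_3)\setminus\operatorname{im}\phi_{23})\big)$, together with partial injections $\chi_2$ of $H_f^\uparrow(G_1)$ into $H_f^\downarrow(G_2)$ and $\chi_3$ of $H_f^\uparrow(G_1)$ into $H_f^\downarrow(G_3)$, constrained by $\operatorname{dom}\chi_2\cap\operatorname{dom}\chi_3=\emptyset$ and $\operatorname{im}\chi_3\cap\operatorname{im}\phi_{23}=\emptyset$. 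The rule $\chi_2=\phi_{12}$, $\chi_3=\psi_1$, $\phi_{23}=\psi_2$ carries one set of constraints exactly onto the other, so it is a bijection between the two index sets; and tracking the shifts $+\,\omega(\cdot)$ prescribed in the definition of composition shows that both bracketings then produce the same $5$-tuple, namely $G_1,G_2,G_3$ juxtaposed in this order with edge set $E_1\sqcup\big(\omega(G_1)+E_2\big)\sqcup\big(\omega(G_1)+\omega(G_2)+E_3\big)$ enlarged by the three families of cross edges coded by $\phi_{12}$ (from $G_1$ to $G_2$), $\psi_2$ (from $G_2$ to $G_3$), and $\psi_1$ (from $G_1$ to $G_3$). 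Since both expansions are multiplicity-free (a B-diagram of this shape determines all three cross-edge families, hence all three matchings), the bijection of index sets gives $(G_1\star G_2)\star G_3=G_1\star(G_2\star G_3)$.

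The genuine difficulty lies entirely in the associativity step: one must track correctly which half-edges stay free and uncut after a composition, and verify that the iterated shifts $+\,\omega(\cdot)$ combine so that the two bracketings yield literally the same labelled $5$-tuple; the multiplicity-free remark is then what upgrades the bijection of index sets to an equality in $\mathcal B$. A more computational alternative would be an induction on $|G_1|$, peeling off its first vertex via the recursive description of B-diagrams in Lemma \ref{recBDiag}, but the bijective argument above is cleaner.
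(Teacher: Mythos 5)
Your proposal is correct, but it is organized quite differently from the paper. The paper asserts this claim with no proof at all (``straightforwardly from the definition''), implicitly treating only the grading, the finite dimensionality and the unit as the content of the claim; associativity of $\star$ is postponed and obtained later as a corollary of the isomorphism $\mathcal B\simeq\mathcal F_{\mathcal H}$, i.e.\ it is inherited from the realization of B-diagrams as elements $\mathtt w(G)$ of the fusion algebra, with a second, combinatorial proof sketched afterwards via the identity (\ref{GstarvG}) in which a partial composition is distributed over a product. Your bookkeeping for the grading, the dimension count and the unit matches what the paper leaves implicit. Your associativity argument is an explicit, self-contained version of the paper's combinatorial alternative: you encode each summand of $G\star G'$ by a partial injection $H_f^\uparrow(G)\hookrightarrow H_f^\downarrow(G')$, observe (correctly) that the expansion is multiplicity-free because the cross edges between the two blocks of labels recover the injection, and then match the index sets of the two bracketings of a triple product by the substitution $\chi_2=\phi_{12}$, $\chi_3=\psi_1$, $\phi_{23}=\psi_2$; your identifications of $H_f^\uparrow$ and $H_f^\downarrow$ of an intermediate composition are exactly right, and the shift bookkeeping does produce the same $5$-tuple on both sides. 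What each approach buys: the paper's route through $\mathcal F_{\mathcal H}$ makes associativity automatic once the (harder) isomorphism theorem is proved and simultaneously yields the link to the normal ordering problem, whereas your bijective proof is elementary, independent of the fusion algebra, and makes the multiplicity-free structure of the product explicit — a fact the paper never states but which is genuinely needed to pass from a bijection of index sets to an identity in $\mathcal B$.
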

\begin{example}
\rm See figures \ref{star1} and \ref{prod2} for two examples of product.
\end{example}
We remark that the product is triangular in the sense that all the B-diagrams different from $G''=G|G'$ which appear in the product $G\star G'$ have strictly less connected components than $G''$. Hence, since $\mathcal B= \mathbb C[\mathbb B]$  and $\mathbb B$ is isomorphic to the free monoid generated by $\mathbb G^*$ (see Proposition(\ref{BMfree})), we obtain the following result.
\begin{proposition}\label{BisFree}
The algebra $\mathcal B$ is free on the indivisible B-diagrams. In other words, it is isomprphic to $\B := \mathbb C\langle \mathbb G\rangle$.
\end{proposition}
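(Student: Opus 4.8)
The plan is to produce an explicit algebra morphism $\psi:\mathbb C\langle\mathbb G\rangle\to\mathcal B$ and to prove it is bijective by a unitriangularity argument graded by the number of connected components. Since $\mathbb C\langle\mathbb G\rangle$ is the free associative algebra on the set $\mathbb G$ of indivisible B-diagrams, the inclusion of $\mathbb G$ into $(\mathcal B,\star)$ extends uniquely to a unital algebra morphism $\psi$, and on a basis word it is the iterated shifted product $\psi(G_1\cdots G_m)=G_1\star\cdots\star G_m$. First I would observe that both algebras are graded by the weight $\omega$: every indivisible diagram has weight at least $1$, $\omega$ is additive under composition, and there are only finitely many B-diagrams of a given weight (hence finitely many words of $\mathbb G$ of a given total weight); thus each graded component is finite dimensional and $\psi$ is a graded morphism. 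It then suffices to show that each restriction $\psi_k:\mathbb C\langle\mathbb G\rangle_k\to\mathcal B_k$ is a linear isomorphism.

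Next I would invoke Proposition~\ref{BMfree}: the map $G_1\cdots G_m\mapsto G_1|\cdots|G_m$ is a bijection from the words of $\mathbb G$ of total weight $k$ onto the B-diagrams of weight $k$, so in particular $\dim\mathbb C\langle\mathbb G\rangle_k=\dim\mathcal B_k$ and these two sets index bases of the two graded components. Then, starting from the triangularity remark preceding the statement --- in $G\star G'$ the diagram $G|G'$ occurs with coefficient $1$ and has $\#\mathrm{Connected}(G)+\#\mathrm{Connected}(G')$ connected components, while every other summand has strictly fewer --- I would prove, by induction on $m$ and using associativity of $\star$ and of $|$, that for every word $w=G_1\cdots G_m$ in $\mathbb G$
\[
\psi(w)=G_1|\cdots|G_m+\sum_{G''}c_{G''}\,G'',
\]
where the sum runs over B-diagrams $G''$ of the same weight with $\#\mathrm{Connected}(G'')<\sum_{i=1}^{m}\#\mathrm{Connected}(G_i)$. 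The inductive step only requires that any summand of $H\star G_m$ has at most $\#\mathrm{Connected}(H)+\#\mathrm{Connected}(G_m)$ connected components, with equality exactly for the juxtaposition $H|G_m$, which is precisely the content of that remark.

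Finally, for fixed $k$, I would order the B-diagrams of weight $k$ by decreasing number of connected components (refined arbitrarily) and transport this order to the words via the bijection above; the matrix of $\psi_k$ in these two ordered bases is then unitriangular, its diagonal entry on $G_1\cdots G_m$ being the coefficient $1$ of $G_1|\cdots|G_m$ and all other nonzero entries of that column, the $c_{G''}$, lying strictly below the diagonal. Hence every $\psi_k$ is invertible, so $\psi$ is an algebra isomorphism and $\mathcal B\simeq\mathbb C\langle\mathbb G\rangle$, which is the claim.

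The hard part will be the bookkeeping behind the displayed expansion: one must check that when $G_1\star\cdots\star G_m$ is fully expanded, the pure juxtaposition $G_1|\cdots|G_m$ is produced exactly once --- by the all-$k=0$ choice at every stage --- and that no other combination of branched half-edges at any stage can recreate it, each nontrivial branching strictly lowering the number of connected components and thereby precluding any cancellation or collision that would disturb the leading term. Everything after that is the routine argument that a graded linear map with unitriangular matrices in compatible bases is an isomorphism.
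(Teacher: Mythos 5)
Your proposal is correct and follows essentially the same route as the paper, which proves Proposition~\ref{BisFree} by combining the freeness of the monoid $(\mathbb B,|)$ on $\mathbb G$ (Proposition~\ref{BMfree}) with the remark that $\star$ is triangular for the filtration by number of connected components, with leading term $G|G'$. You have merely made explicit the unitriangularity and dimension-count bookkeeping that the paper leaves implicit.
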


\subsection{From B-diagrams algebra to Fusion algebra}
The aim of this section is to prove that the algebra $\mathcal{B}$ is isomorphic to a subalgebra of $\hat{\mathcal F}$.
\begin{definition}
A \emph{path} in a B-Diagram $G=(n,\lambda,E^\uparrow,E^\downarrow,E)$ is an increasing sequence of integers $1\leq i_1<\cdots<i_k\leq \omega(G)$ such that $(i_1,i_2),(i_2,i_3),\dots, (i_{k-1},i_k)\in E$, $i_1\in H_f^\downarrow(G)$ and $i_k\in H_f^\uparrow(G)$. Let $\mathrm{Paths}(G)$ denote the set of the paths in $G$.

Let $p=(i_1,\dots,i_k)\in\mathrm{Path}(G)$. For simplicity, we define
\begin{itemize}
\item $p^\downarrow=i_1$ and $p^\uparrow=i_k$,
\item $\bullet_p:=\left\{\begin{array}{ll}\ \rangle&\mbox{ if }i_1\in\mathcal H_f^\downarrow\\\ \langle&\mbox{otherwise} \end{array}\right.$ and   $\diamond_p=\left\{\begin{array}{ll}\ \langle&\mbox{ if }i_k\in\mathcal H_f^\uparrow\\\ \rangle&\mbox{otherwise} \end{array}\right.$
\item $\mathrm{seq}_G(p)=\left(v_G(i_1)\atop i_1\right)\cdots\left(v_G(i_k)\atop i_k\right)$
\end{itemize}
\end{definition}
\begin{remark}\label{rp2h}
It is easy to check that
\begin{enumerate}
\item $h_f^\uparrow=\#\{p\in\mathrm{Paths}(G):\diamond_p=\langle\}$,
\item $h_f^\downarrow=\#\{p\in\mathrm{Paths}(G):\bullet_p=\rangle\}$,
\item $h_c=\#\{p\in\mathrm{Paths}(G):\diamond_p=\rangle\}+\#\{p\in\mathrm{Paths}(G):\bullet_p=\langle\}$.
\end{enumerate}
\end{remark}
\begin{example}\rm
Consider the B-diagram $G$ of Figure \ref{bdiag1}. One has
\[
\mathrm{Paths}(G)=\{(1,6),(2,4,5),(3)\}.
\]
Suppose that $p=(1,6)$, one has
\[
\bullet_p=\rangle,\  \diamond_p=\rangle,\mbox{ and }\mathrm{seq}_G(p)=\left(1\atop 1\right)\left(3\atop 6\right).
\]
We have $h_f^\uparrow=2=\#\{(2,4,5),(3)\}$, $h_f^\downarrow=3=\#\{(1,6),(2,4,5),(3)\}$, and $h_c=1=\#\{(1,6)\}$.

\end{example}
For any B-diagram $G$ we define \begin{equation}\mathtt w(G):=\displaystyle\prod_{p\in \mathrm{Paths}(G)}\raa_{\bullet_p\mathrm{seq}_G(p)}\prod_{p\in \mathrm{Paths}(G)}\baa_{\mathrm{seq}(p)_G\diamond_p}\in\hat{\mathcal F}.\end{equation}
Clearly, a B-diagram $G$ is completely characterized by the values of $\mathtt{seq}(p)$, $\bullet_p$ and $\diamond_p$ associated to each of its paths $p$. So $\mathtt w$ is into. Furthermore, $\mathtt w$ sends the empty B-diagram $\varepsilon$ to the unit $1$ of $\hat{\mathcal F}$.
\begin{example}\rm
If we consider the B-diagram $G=(1,6)$ in Figure \ref{bdiag1}, one has
\[
\mathtt w(G)=\raa_{\left\rangle\left(1\atop 1\right)\left(3\atop 6\right)\right.}\raa_{\left\rangle\left(1\atop 2\right)\left(2\atop 4\right)\left(3\atop 5\right)\right.}
\raa_{\left\rangle\left(1\atop 3\right)\right.}\baa_{\left.\left(1\atop 1\right)\left(3\atop 6\right)\right\rangle}\baa_{\left.\left(1\atop 2\right)\left(2\atop 4\right)\left(3\atop 5\right)\right\rangle}
\baa_{\left.\left(1\atop 3\right)\right\rangle}.
\]
\end{example}
\begin{definition}
Let  $\mathcal F_{\mathcal H}$ denote the subalgebra of $\hat{\mathcal F}$ generated by the elements $\mathtt w(G)$ where $G$ is a B-diagram.
\end{definition}
\begin{lemma}\label{LbasisFH}
The set $\{\mathtt w(G):G\mbox{ is a B-diagram\}}$ is a basis of the space $\mathcal F_{\mathcal H}$.
\end{lemma}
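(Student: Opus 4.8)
Two things are needed: that $\{\mathtt w(G):G\text{ a B-diagram}\}$ spans $\mathcal F_{\mathcal H}$, and that it is linearly independent in $\hat{\mathcal F}$. Since $\mathcal F_{\mathcal H}$ is by definition the subalgebra of $\hat{\mathcal F}$ generated by the $\mathtt w(G)$, it is spanned by the products $\mathtt w(G_1)\star\cdots\star\mathtt w(G_m)$; using associativity of $\star$ and induction on $m$, for spanning it suffices to prove $\mathtt w(G_1)\star\mathtt w(G_2)\in\mathrm{span}\{\mathtt w(G)\}$ for any two B-diagrams. For independence I will put a filtration on $\mathcal F$ whose associated graded is a polynomial algebra and read the diagram $G$ off the symbol of $\mathtt w(G)$.

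\textbf{Spanning.} By definition of the shifted product, $\mathtt w(G_1)\star\mathtt w(G_2)$ is the class in $\mathcal F$ of the concatenation of $\mathtt w(G_1)$ with the image of $\mathtt w(G_2)$ under the shift that relabels the vertices and half-edges of $G_2$ so as to follow those of $G_1$; this is a word of the shape $\bigl(\prod_{p}\raa_{\bullet_p\mathrm{seq}_{G_1}(p)}\bigr)\bigl(\prod_{p}\baa_{\mathrm{seq}_{G_1}(p)\diamond_p}\bigr)\bigl(\prod_{q}\raa_{\bullet_q\mathrm{seq}'(q)}\bigr)\bigl(\prod_{q}\baa_{\mathrm{seq}'(q)\diamond_q}\bigr)$, with $p$ ranging over $\mathrm{Paths}(G_1)$, $q$ over $\mathrm{Paths}(G_2)$, and $\mathrm{seq}'(q)$ the shifted $\mathrm{seq}_{G_2}(q)$. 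Because the letters of $\mathcal A_{\langle\ \rangle}$ are central (Remark after Definition \ref{dfusion}) and the $\raa$'s (resp.\ the $\baa$'s) commute among themselves (relations \ref{P1f} and \ref{P2f}), it remains only to normalise the part in which a $\baa_{\cdot\langle}$ coming from $G_1$ precedes a $\raa_{\rangle\cdot}$ coming from $G_2$; applying rule \ref{P3f} repeatedly to such patterns (the generalisation of Formula (\ref{FP3})) expands the word as a sum of normal forms $\bigl(\prod\raa\bigr)\bigl(\prod\baa\bigr)$ indexed by the partial matchings pairing some free outer half-edges of $G_1$ with some free inner half-edges of $G_2$. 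Each such matching fuses, pairwise, a path of $G_1$ ending at a free outer half-edge with a path of $G_2$ starting at a free inner half-edge; since the half-edge labels of $G_1$ and of the shifted $G_2$ are disjoint, the index-strings produced are pairwise distinct and their half-edge entries partition $\llbracket 1,\omega(G_1)+\omega(G_2)\rrbracket$, and reading consecutive pairs in each string as edges and the decorations $\bullet,\diamond$ as membership in $E^\uparrow,E^\downarrow$, one checks at once (the conditions of Definition \ref{DBDiag}) that each summand is $\mathtt w(G'')$ for a B-diagram $G''$, the set of these $G''$ being exactly $G_1\medstar G_2$. Hence $\mathtt w(G_1)\star\mathtt w(G_2)=\sum_{G''\in G_1\medstar G_2}\mathtt w(G'')\in\mathrm{span}\{\mathtt w(G)\}$; this settles spanning (and is the identity that will later make $\mathtt w$ an algebra isomorphism onto $\mathcal F_{\mathcal H}$).

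\textbf{A polynomial associated graded, and the one hard point.} Filter $\mathbb C\langle\mathcal A\rangle$ by the number of letters from $\mathcal A_{\bullet\times}$ (the $\raa$-letters); this is an algebra filtration. It passes to $\mathcal F$, because the generators of $\mathcal J_F$ are compatible with it: those of types \ref{P1f} and \ref{P2f} are homogeneous, and in a generator of type \ref{P3f} the first two monomials carry one more $\raa$ than the third. In $\mathrm{gr}\,\mathcal F$ the symbol of a type-\ref{P3f} generator is therefore $\mathtt u\raa_{\bullet\alpha}\mathtt v\,[\baa_{\alpha\langle},\raa_{\rangle\beta}]\,\mathtt x\baa_{\beta\diamond}\mathtt y$, which together with the symbols of types \ref{P1f} and \ref{P2f} accounts for all commutators of letters of $\mathcal A$; hence $\mathrm{gr}\,\mathcal F$ is a quotient of the polynomial algebra $\mathbb C[\mathcal A]$. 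The genuine obstacle of the proof is the complementary inclusion, namely that this quotient map is an isomorphism. I would get it by orienting the relations of Definition \ref{dfusion} into a terminating, confluent rewriting system on $\mathbb C\langle\mathcal A\rangle$ — push the letters of $\mathcal A_{\langle\ \rangle}$ to the two ends, sort the $\raa$'s and the $\baa$'s among themselves, and apply rule \ref{P3f} from left to right — with termination by a well-founded order on the triple (number of $\raa$-letters, number of occurrences of a $\baa_{\cdot\langle}$ immediately followed by a $\raa_{\rangle\cdot}$, length) and confluence by checking the finitely many critical overlaps; the reduced words then form a basis of $\mathcal F$, and taking symbols identifies them with the monomials of $\mathbb C[\mathcal A]$, so $\mathrm{gr}\,\mathcal F\cong\mathbb C[\mathcal A]$.

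\textbf{Conclusion.} Granting $\mathrm{gr}\,\mathcal F\cong\mathbb C[\mathcal A]$, fix $G$ and put $m_G:=\#\mathrm{Paths}(G)$. Then $\mathtt w(G)$ lies in filtration degree $m_G$, and its symbol in $\mathrm{gr}_{m_G}\mathcal F\cong\mathbb C[\mathcal A]_{m_G}$ is the monomial $\prod_{p\in\mathrm{Paths}(G)}\raa_{\bullet_p\mathrm{seq}_G(p)}\cdot\prod_{p\in\mathrm{Paths}(G)}\baa_{\mathrm{seq}_G(p)\diamond_p}$. As the strings $\mathrm{seq}_G(p)$ are pairwise distinct, this monomial records the two multisets $\{(\bullet_p,\mathrm{seq}_G(p))\}_p$ and $\{(\mathrm{seq}_G(p),\diamond_p)\}_p$, and — the middle components being distinct — from these one recovers every triple $(\bullet_p,\mathrm{seq}_G(p),\diamond_p)$, hence $G$ itself (this is exactly the injectivity of $\mathtt w$ noted before the statement). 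Thus distinct B-diagrams yield distinct monomials of $\mathbb C[\mathcal A]$. Now suppose $\sum_G c_G\,\mathtt w(G)=0$ with finitely many $c_G\neq0$, let $M$ be the largest $m_G$ occurring with $c_G\neq0$, and apply the symbol map in filtration degree $M$: we obtain $\sum_{m_G=M}c_G\cdot(\text{monomial of }G)=0$ in $\mathbb C[\mathcal A]$, whence $c_G=0$ for those $G$ by linear independence of distinct monomials, a contradiction. So all $c_G$ vanish, and combined with the spanning statement, $\{\mathtt w(G):G\text{ a B-diagram}\}$ is a basis of $\mathcal F_{\mathcal H}$.
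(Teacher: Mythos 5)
Your spanning argument is fine and is essentially the computation the paper carries out in the theorem that follows this lemma (where $\mathtt w(G)\star\mathtt w(G')=\mathtt w(G\star G')$ is proved); note that the paper's own proof of the lemma addresses only linear independence, leaving spanning to that theorem. For independence, however, your route is genuinely different from the paper's and it stalls at exactly the decisive point. You reduce everything to the claim that $\mathrm{gr}\,\mathcal F\cong\mathbb C[\mathcal A]$ for the filtration by the number of $\raa$-letters, you correctly flag this as ``the genuine obstacle'', and then you only assert it, via a rewriting system whose confluence you do not verify. This verification is not routine here: rule \ref{P3f} of Definition \ref{dfusion} is not a local string-rewriting rule, because the letters $\raa_{\bullet\alpha}$ and $\baa_{\beta\diamond}$ that get fused may sit arbitrarily far from the contracted factor $\baa_{\alpha \langle}\raa_{\rangle\beta}$; the parameters $\alpha,\beta$ range over an infinite set; and in a general word of $\mathcal A^*$ an occurrence of $\baa_{\alpha \langle}\raa_{\rangle\beta}$ need not determine matching letters $\raa_{\bullet\alpha}$ and $\baa_{\beta\diamond}$ uniquely, or at all. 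So ``finitely many critical overlaps'' is unjustified, and without confluence your filtration argument only shows that $\mathrm{gr}\,\mathcal F$ is a \emph{quotient} of $\mathbb C[\mathcal A]$, which leaves open the possibility that distinct monomials --- hence the symbols of distinct $\mathtt w(G)$ --- become linearly dependent. The independence half is therefore not proved.

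For comparison, the paper sidesteps the associated-graded machinery: it first passes to the partially commutative algebra $\mathbb C\langle \mathcal A,\theta\rangle=\mathbb C[\mathbb M(\mathcal A,\theta)]$ obtained by imposing only relations \ref{P1f} and \ref{P2f}, in which the elements $\widetilde{\mathtt w}(G)$ are honestly distinct monomials, and then shows that the span $\mathcal W$ of these monomials meets the remaining ideal $\mathcal J_2$ trivially: every generator of $\mathcal J_2$ involves the monomial $\mathtt u\raa_{\bullet\alpha}\mathtt v\baa_{\alpha \langle}\raa_{\rangle\beta}\mathtt x\baa_{\beta\diamond}\mathtt y$, whose factor $\baa_{\alpha \langle}\raa_{\rangle\beta}$ is a non-commuting pair in the trace monoid and hence can never be rearranged into the normal shape $\prod\raa\,\prod\baa$. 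To complete your proof you would need either to actually establish $\mathrm{gr}\,\mathcal F\cong\mathbb C[\mathcal A]$ --- which is essentially equivalent in difficulty to the lemma itself --- or to replace that step by an argument of the paper's kind.
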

\begin{proof}
One has to prove that for any $G_1,\dots, G_n$ distinct, $\sum \alpha_i\mathtt w(G_i)=0$ implies $\alpha_1=\cdots=\alpha_n=0$.
We proceed as follows: first we consider the partially commutative free algebra $\mathbb C\langle A,\theta\rangle=\mathbb C\langle A\rangle/_{\mathcal J_1}$ where $\mathcal J_1$ is the ideal generated by the polynomials of the points \ref{P1f} and \ref{P2f} of Definition \ref{dfusion}. Notice that $\mathbb C\langle A,\theta\rangle=\mathbb C[\mathbb M(\mathcal A,\theta)]$ is the algebra of the free partially commutative monoid $\mathbb M(\mathcal A,\theta)=\mathcal A^*/_{\equiv_\theta}$ where $\equiv_\theta$ is the congruence generated by $\mathtt u\mathtt e\mathtt f\mathtt v=\mathtt u\mathtt f\mathtt e\mathtt v$ for each $\mathtt u, \mathtt v\in\mathcal A^*$, ($\mathtt e\in\mathcal A_{\langle\ \rangle}$ and $\mathtt f\in \mathcal A$) or $\mathtt e,\mathtt f\in\mathcal A_{|\diamond}$ or $\mathtt e,\mathtt f\in\mathcal A_{\bullet|}$. Hence, we define $\mathcal J_2$ the ideal of $\mathbb C\langle A,\theta\rangle$ generated by the polynomials \begin{equation}\label{leqpol}\mathtt u\raa_{\bullet\alpha}\mathtt v\baa_{\alpha \langle}\raa_{\rangle\beta}\mathtt x\baa_{\beta\diamond}\mathtt y-\mathtt u\raa_{\bullet\alpha}\mathtt v\raa_{\rangle\beta}\baa_{\alpha \langle}\mathtt x\baa_{\beta\diamond}\mathtt y-\mathtt u\raa_{\bullet\alpha\beta}\mathtt v\mathtt x\baa_{\alpha\beta\diamond}\mathtt y\end{equation} for any $\mathtt u,\mathtt v, \mathtt x,\mathtt y\in\mathbb M(\mathcal A,\theta)$,  $\alpha=\left(i_1\atop j_1\right)\cdots \left(i_k\atop j_k\right)$, $\beta=\left(i'_1\atop j'_1\right)\cdots \left(i'_{k'}\atop j'_{k'}\right)$ (with $k,k'\in\N$ and $i_1,\dots,i_k$, $j_1,\dots,j_k$, $i'_1,\dots,i'_{k'}$, $j'_1\dots,j'_{k'}\in\N\setminus \{0\}$), and $\bullet,\diamond\in\{\langle,\rangle\}$. Observe that $\mathcal F=\mathbb C[\mathbb M(\mathcal A,\theta)]/_{\mathcal J_2}$. We define $\widetilde{\mathtt w}(G)=\displaystyle\prod_{p\in \mathrm{Paths}(G)}\raa_{\bullet_p\mathrm{seq}_G(p)}\prod_{p\in \mathrm{Paths}(G)}\baa_{\mathrm{seq}(p)_G\diamond_p}\in \mathbb C\langle A,\theta\rangle$. Let $\mathcal W$ be the subspace generated by the monomials $\widetilde{\mathtt w}(G)$.
Remarking that the map $\widetilde{\mathtt w}$ is into, our statement is equivalent to $\mathcal W\cap\mathcal J_2$=0.
Let $P\in \mathcal W\cap\mathcal J_2$. We have
$$\begin{array}{rcl}P&=&Q\left(\mathtt u\raa_{\bullet\alpha}\mathtt v\baa_{\alpha \langle}\raa_{\rangle\beta}\mathtt x\baa_{\beta\diamond}\mathtt y-\mathtt u\raa_{\bullet\alpha}\mathtt v\raa_{\rangle\beta}\baa_{\alpha \langle}\mathtt x\baa_{\beta\diamond}\mathtt y-\mathtt u\raa_{\bullet\alpha\beta}\mathtt v\mathtt x\baa_{\alpha\beta\diamond}\mathtt y\right)R\\
&=&\displaystyle\sum_i\alpha_i\prod_{p\in \mathrm{Paths}(G_i)}\raa_{\bullet_p\mathrm{seq}_{G_i}(p)}\prod_{p\in \mathrm{Paths}(G_i)}\baa_{\mathrm{seq}(p)_{G_i}\diamond_p}.\end{array}$$
Since $\mathtt u\raa_{\bullet\alpha}\mathtt v\baa_{\alpha \langle}\raa_{\rangle\beta}\mathtt x\baa_{\beta\diamond}\mathtt y$ can not be written under the form
$\prod_{j}\raa_{\bullet_j\alpha_j}\prod_{j}\baa_{\alpha_j\diamond_j}$, this is not possible unless $P=0$ (because of the factor $\baa_{\alpha \langle}\raa_{\rangle\beta}$ ).
\end{proof}

The behaviour of the paths with respect to the composition is summarized as follows:
\begin{equation}\label{comp2Path}\begin{array}{l}\mathrm{Paths}\left(\begin{array}{c}G'\\\mcomp{a_1,\dots,a_k}{b_1,\dots,b_k}\\G\end{array}\right)=\left\{p\in \mathrm{Paths}(G):p^\uparrow\not\in\{a_1,\dots,a_k\}\right\}
 \cup  \left\{p'[\omega(G)]:p'\in \mathrm{Paths}(G'):p^\downarrow\not\in\{b_1,\dots,b_k\}\right\}
\\ \cup \left\{pp'[\omega(G)]:p\in \mathrm{Paths}(G),
p'\in \mathrm{Paths}(G'), p^\uparrow\in\{a_1,\dots,a_k\}, p'^\downarrow\in\{b_1,\dots,b_k\}\right\}
\end{array}
\end{equation}
where $pp'$ denotes the sequence obtained by catening $p$ and $p'$ and $p[n]$ is the sequence obtained from $p$ by adding $n$ to each element.

\begin{example}\rm
Examine Figure \ref{comp1}. Let $G$ denote the lower B-diagram in the left hand sides of the equality and by $G'$ . We have
\[
\mathrm{Paths}(G)=\{(1,6),(2),(3,7),(4),(5),(8)\} \mbox{ and } \mathrm{Paths}(G')=\{(1,6),(2,4,5),(3)\}
\]
Hence
\[
\mathrm{Paths}\left(\begin{array}{c}G'\\\mcomp{4,6}{3,1} \\G\end{array}\right)=\{(2),(3,7),(5),(8) \}\cup\{
(10,12,13)\}\cup\{(1,6,9,14),(4,11)\}
\}
\]
\end{example}
\begin{theorem}
The algebras $\mathcal {B}$ and $\mathcal F_{\mathcal H}$ are isomorphic and an explicit isomorphism sends each B-diagram $G$ to $\mathtt w(G)$.
\end{theorem}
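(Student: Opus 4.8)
The plan is to show that the linear map $\Phi\colon\mathcal B\to\mathcal F_{\mathcal H}$ determined on the basis of B-diagrams by $\Phi(G)=\mathtt w(G)$ is an isomorphism of unital algebras. That $\Phi$ is a linear isomorphism is already available: $\mathtt w$ is injective (a B-diagram is recovered from the data $\mathrm{seq}_G(p),\bullet_p,\diamond_p$ attached to its paths $p$), and by Lemma~\ref{LbasisFH} the family $\{\mathtt w(G)\}$ is a basis of $\mathcal F_{\mathcal H}$; so $\Phi$ carries the basis $\{G\}$ of $\mathcal B$ bijectively onto a basis of $\mathcal F_{\mathcal H}$, and it sends $\varepsilon$ to $1$. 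Everything therefore reduces to multiplicativity, i.e. to the identity
\[
\mathtt w(G)\star\mathtt w(G')=\sum_{G''\in G\medstar G'}\mathtt w(G'')=\Phi(G\star G')
\]
for all B-diagrams $G,G'$.

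The first step is to unwind the shifted product on $\hat{\mathcal F}$. Since every vertex and every half-edge of $G$ occurs in exactly one path, one has $|\mathtt w(G)|=|G|$ and $\omega(\mathtt w(G))=\omega(G)$, so the operator $[\,|\mathtt w(G)|,\omega(\mathtt w(G))\,]$ relabels the vertex labels and half-edge labels occurring in $\mathtt w(G')$ by precisely the amounts used when forming a composition $G''\in G\medstar G'$. Writing $\mathrm{seq}_{G'}(p')[\omega(G)]$ for this relabeling of $\mathrm{seq}_{G'}(p')$, we get that $\mathtt w(G)\star\mathtt w(G')$ equals, inside $\hat{\mathcal F}$, the concatenation
\[
\Big(\textstyle\prod_{p\in\mathrm{Paths}(G)}\raa_{\bullet_p\mathrm{seq}_G(p)}\Big)\Big(\textstyle\prod_{p\in\mathrm{Paths}(G)}\baa_{\mathrm{seq}_G(p)\diamond_p}\Big)\Big(\textstyle\prod_{p'\in\mathrm{Paths}(G')}\raa_{\bullet_{p'}\mathrm{seq}_{G'}(p')[\omega(G)]}\Big)\Big(\textstyle\prod_{p'\in\mathrm{Paths}(G')}\baa_{\mathrm{seq}_{G'}(p')[\omega(G)]\diamond_{p'}}\Big).
\]

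The second step brings in the fusion relations. By relation~\ref{P1f}, the letters lying in $\mathcal A_{\langle\ \rangle}$ — those attached to cut endpoints, namely the $\raa$'s coming from paths $p'$ of $G'$ with $\bullet_{p'}=\langle$ and the $\baa$'s coming from paths $p$ of $G$ with $\diamond_p=\rangle$ — are central and play no part. Relation~\ref{P3f} allows each remaining letter $\baa_{\mathrm{seq}_G(p)\langle}$ (from a path $p$ of $G$ with $\diamond_p=\langle$) to be paired, or not, with each remaining letter $\raa_{\rangle\mathrm{seq}_{G'}(p')[\omega(G)]}$ (from a path $p'$ of $G'$ with $\bullet_{p'}=\rangle$); iterating~\ref{P3f} together with~\ref{P1f} and~\ref{P2f}, exactly as in the derivation of Formula~(\ref{FP3}), expands $\mathtt w(G)\star\mathtt w(G')$ as a sum over all injective partial matchings $M$ between such paths of $G$ and such paths of $G'$. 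In the term indexed by $M$, each matched pair $(p,p')$ coalesces its $\baa$-letter and $\raa$-letter into a single $\raa$–$\baa$ pair, the left symbol $\bullet_p$ and the right symbol $\diamond_{p'}$ being left untouched by~\ref{P3f}; hence, by the path-composition formula~(\ref{comp2Path}), this term equals $\mathtt w(G'')$ for the B-diagram $G''$ obtained from $G$ and $G'$ by adding, for each matched pair $(p,p')$, the edge joining the free outer half-edge $p^\uparrow$ of $G$ to the free inner half-edge $(p')^\downarrow$ of $G'$: the matched pairs become the catenated paths $pp'[\omega(G)]$ of $G''$, the unmatched paths are unchanged, and $\bullet,\diamond$ of each resulting path is read off its free endpoint. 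Finally, the matchings $M$ are in bijection with the index set of the sum~(\ref{starHW2}) defining $G\star G'$ — an increasing sequence $a_1<\dots<a_k$ in $H_f^\uparrow(G)$ together with distinct $b_1,\dots,b_k$ in $H_f^\downarrow(G')$, via $a_\ell=p_\ell^\uparrow$ and $b_\ell=(p'_\ell)^\downarrow$ — whence $\mathtt w(G)\star\mathtt w(G')=\sum_{G''\in G\medstar G'}\mathtt w(G'')$, so $\Phi$ is a unital algebra isomorphism.

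The step requiring the most care is this last one: one must keep in lockstep the three parallel descriptions — the fusion expansion coming from relation~\ref{P3f}/Formula~(\ref{FP3}), the path-composition formula~(\ref{comp2Path}), and the defining sum~(\ref{starHW2}) of the product of $\mathcal B$ — and, in particular, check that the central (cut-endpoint) letters are exactly the ones excluded from fusion, so that the letters of $\mathcal A_{\times\langle}$ occurring in $\mathtt w(G)$ correspond bijectively to the free outer half-edges $H_f^\uparrow(G)$ and the letters of $\mathcal A_{\rangle\times}$ occurring in $\mathtt w(G')$ to the free inner half-edges $H_f^\downarrow(G')$, with every diagram of $G\medstar G'$ produced exactly once.
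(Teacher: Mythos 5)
Your proposal is correct and follows essentially the same route as the paper's proof: both reduce the statement to the multiplicativity identity $\mathtt w(G)\star\mathtt w(G')=\sum_{G''\in G\medstar G'}\mathtt w(G'')$, establish it by pulling the central cut-endpoint letters aside via relations \ref{P1f}--\ref{P2f}, expanding the remaining interacting block with Formula (\ref{FP3}), and matching the resulting terms with compositions through (\ref{comp2Path}) and (\ref{starHW2}), before invoking Lemma \ref{LbasisFH} for linear bijectivity. The only difference is cosmetic (you check bijectivity first, the paper last), so nothing further is needed.
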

\begin{proof}
First let us prove that $\mathtt w$ can be extended as a morphism of algebra.
In other words, we first extend $\mathtt w$ as linear map and we prove that $\mathtt w\left(G\star G'\right)=\mathtt w(G)\star \mathtt w(G')$.
Observe that from (\ref{starHW}), we obtain
\begin{equation}\label{wG*G'}
\mathtt w\left(G\star G'\right) =\sum_{i_1<\dots<i_k\in H_f^\uparrow(G)\atop j_1,\dots,j_k\in H_f^\downarrow(G'),\mbox{ \tiny distinct}}\mathtt w\left( \begin{array}{c}G'\\\mcomp{i_1,\dots,i_k}{j_1,\dots,j_k}\\G\end{array}\right).
\end{equation}
But if $G''=\begin{array}{c}G'\\\mcomp{i_1,\dots,i_k}{j_1,\dots,j_k}\\G\end{array}$, Equality (\ref{comp2Path}) implies
\begin{equation}\label{wG''}\begin{array}{rcl}
\mathtt w\left(G'' \right)&=&\displaystyle
\prod^{(*)} \raa_{\bullet_p\mathtt{seq}_{G''}(p)} \prod^{(**)} \raa_{\bullet_{p'}\mathtt{seq}_{G''}\left(p'\right)}
\prod^{(***)}\raa_{\bullet_p\mathtt{seq}_{G''}\left(pp'\right)}\\&&\times\displaystyle
\prod^{(*)} \baa_{\mathtt{seq}_{G''}(p)\diamond_p} \prod^{(**)} \baa_{\mathtt{seq}_{G''}\left(p'\right)\diamond_{p'}}
\prod^{(***)}\baa_{\mathtt{seq}_{G''}\left(pp'\right)\diamond_{p'}}\end{array}\end{equation}
where the products $\displaystyle\prod^{(*)}$ are over the paths $p\in \mathrm{Paths}(G)$ such that $p^\uparrow\not\in\{i_1,\dots,i_k\}$,
the products $\displaystyle\prod^{(**)}$ are over the paths $p'\in \mathrm{Paths}(G')$ such that $p'^\downarrow\not\in\{j_1,\dots,j_k\}$, and
the products $\displaystyle\prod^{(***)}$ are over the pairs of paths $p\in \mathrm{Paths}(G)$ and $p'\in \mathrm{Paths}(G')$
with $p^\uparrow=i_h$ and $p'^\downarrow=j_h$ for some $1\leq h\leq k$.

Now, we examine $\mathtt w(G)\star \mathtt w(G')$. One has
\begin{eqnarray*}\label{wG*wG'}
\mathtt{w}(G)\star \mathtt{w}(G')&=&\displaystyle\left(\prod_{p\in\mathrm{Paths}(G)}\raa_{\bullet_p\mathrm{seq}_G(p)}\prod_{p\in\mathrm{Paths}(G)}\baa_{\mathrm{seq}_G(p)\diamond_p}\right)
\star \left(\prod_{p'\in\mathrm{Paths}(G')}\raa_{\bullet_{p'}\mathrm{seq}_{G'}(p')}\prod_{p'\in\mathrm{Paths}(G')}\baa_{\mathrm{seq}_{G'}(p')\diamond_{p'}}\right) \\ 
&=&\displaystyle\prod_{p\in\mathrm{Paths}(G)\atop \diamond_p=\rangle}\raa_{\bullet_p\mathrm{seq}_{G}(p)}
\prod_{p'\in\mathrm{Paths}(G')\atop \bullet_{p'}=\langle}\left(\raa_{\langle\mathrm{seq}_{G'}\left(p\right)}[|G|,\omega(G)]\right)\left(\mathtt u\star\mathtt  u'\right)\\ 
&& \times\displaystyle\prod_{p\in\mathrm{Paths}(G)\atop \diamond_p=\rangle}\baa_{\mathrm{seq}_{G}(p)\rangle}
\prod_{p'\in\mathrm{Paths}(G')\atop \bullet_{p'}=\langle}\left(\baa_{\mathrm{seq}_{G'}\left(p'\right)\diamond_{p'}}[|G|,\omega(G)]\right)
\end{eqnarray*}
where
\begin{equation} \mathtt u=\displaystyle\prod_{p\in\mathrm{Paths}(G)\atop \diamond_p=\langle}\raa_{\bullet_p\mathrm{seq}_G(p)}\prod_{p\in\mathrm{Paths}(G)\atop \diamond_p=\langle}\baa_{\mathrm{seq}_G(p)\langle}\mbox{ and }\mathtt u'=\displaystyle\prod_{p'\in\mathrm{Paths}(G')\atop \bullet_{p'}=\rangle}\raa_{\rangle\mathrm{seq}_{G'}(p')}\prod_{p'\in\mathrm{Paths}(G')\atop \bullet_{p'}=\rangle}\baa_{\mathrm{seq}_{G'}(p')\diamond_{p'}}\end{equation}
Hence we apply equality (\ref{FP3}) to $\mathtt u\star \mathtt u'$. Observing that the pairs of sequences $i_1<\dots<i_k$ and $j_1,\dots,j_k$ in (\ref{FP3}) are in a one to one correspondence with the B-diagrams $\begin{array}{c}G'\\\mcomp{i_1,\dots,i_k}{j_1,\dots,j_k}\\G\end{array}$, we obtain
\begin{equation}\begin{array}{r}
\mathtt u\star \mathtt u'=\displaystyle\sum_{G''}\left(
\prod^{(*)'} \raa_{\bullet_{p}\mathtt{seq}_{G''}(p)} \prod^{(**)'} \raa_{\langle\mathtt{seq}_{G''}\left(p'[\omega(G)]\right)}
\prod^{(***)'}\raa_{\bullet_{p}\mathtt{seq}_{G''}\left(pp'[\omega(G)]\right)}\right.\\\times\displaystyle\left.
\prod^{(*)'} \baa_{\mathtt{seq}_{G''}(p)\langle} \prod^{(**)'} \baa_{\mathtt{seq}_{G''}\left(p'[\omega(G)]\right)\diamond_{p'}}
\prod^{(***)'}\baa_{\mathtt{seq}_{G''}\left(pp'[\omega(G)]\right)\diamond_{p'}}\right),
\end{array}
\end{equation}
where the sum is over the B-diagram $G''=\begin{array}{c}G'\\\mcomp{i_1,\dots,i_k}{j_1,\dots,j_k}\\G\end{array}$ with $i_1<\dots<i_k\in H^\uparrow_f(G)$ and $j_1,\dots,j_k\in H^\downarrow_f(G)$ distinct,
the products $\displaystyle\prod^{(*)'}$ are over the paths $p\in \mathrm{Paths}(G)$ such that $p^\uparrow\not\in\{i_1,\dots,i_k\}$ and $\diamond_p=\langle$,
the products $\displaystyle\prod^{(**)'}$ are over the paths $p'\in \mathrm{Paths}(G')$ such that $p'^\downarrow\not\in\{j_1,\dots,j_k\}$ and $\bullet_{p'}=\rangle$, and
the products $\displaystyle\prod^{(***)'}$ are over the pairs of paths $p\in \mathrm{Paths}(G)$ and $p'\in \mathrm{Paths}(G')$ with $p^\uparrow=i_h$ and $p'^\downarrow=j_h$ for some $1\leq h\leq k$.\\
Notice that for a given $G''$ the paths which do not appear in the product
\begin{equation}\label{PG''}\begin{array}{rcl}
\mathcal P(G'')&=&\displaystyle\prod^{(*)'} \raa_{\bullet_p\mathtt{seq}_{G''}(p)} \prod^{(**)'} \raa_{\langle\mathtt{seq}_{G''}\left(p'[\omega(G)]\right)}
\prod^{(***)'}\raa_{\bullet_p\mathtt{seq}_{G''}\left(pp'[\omega(G)]\right)}
\\&&\displaystyle\times\prod^{(*)'} \baa_{\mathtt{seq}_{G''}(p)\langle} \prod^{(**)'} \baa_{\mathtt{seq}_{G''}\left(p'[\omega(G)]\right)\diamond_{p'}}
\prod^{(***)'}\baa_{\mathtt{seq}_{G''}\left(pp'[\omega(G)]\right)\diamond_{p'}}
\end{array}
\end{equation}
are exactly the paths $p$ of $G$ such that $\diamond_p=\rangle$ and the paths $p'[\omega(G)]$ where $p'\in\mathtt {Paths}(G')$ and $\bullet_{p'}=\langle$.
Hence, comparing (\ref{PG''}) to (\ref{wG''}), one obtains
\begin{equation}\begin{array}{r}\displaystyle
\mathtt w(G'')=\prod_{p\in\mathrm{Paths}(G)\atop p^\uparrow=\rangle}\raa_{\bullet_p \mathrm{seq}_{G}(p)}
\prod_{p'\in\mathrm{Paths}(G')\atop \bullet_{p'}=\langle}
\left(\raa_{\langle\mathrm{seq}_{G'}\left(p\right)}[|G|,\omega(G)]\right)\mathcal P(G'')
\\\displaystyle\times\prod_{p\in\mathrm{Paths}(G)\atop \diamond{p}=\rangle}\baa_{\mathrm{seq}_{G}(p)\rangle}
\prod_{p'\in\mathrm{Paths}(G')\atop \bullet_{p'}=\langle}\left(\baa_{\mathrm{seq}_{G'}\left(p'\right)\bullet_{p'}}[|G|,\omega(G)]\right)
.\end{array}
\end{equation}
So (\ref{wG*wG'}) becomes
\[
\mathtt w(G)\star \mathtt w(G')=\sum_{G''}\mathtt w(G'')
\]
where the sum is over the B-diagram $G''=\begin{array}{c}G'\\\mcomp{i_1,\dots,i_k}{j_1,\dots,j_k}\\G\end{array}$ with $i_1<\dots<i_k\in H^\uparrow_f(G)$ and $j_1,\dots,j_k\in H^\downarrow_f(G)$ distinct. In other words,
\[
\mathtt w(G)\star \mathtt w(G')=\mathtt w(G\star G').
\]
Lemma \ref{LbasisFH} allows us to conclude.
\end{proof}

\begin{example}\rm
Compare the computation in Figure \ref{star1} with Example \ref{ExFH}.
\begin{figure}[H]
\begin{center}
\begin{tikzpicture}
\setcounter{Edge}{1}
\setcounter{Vertex}{1}
\bugddux 01{b1}
\node(st) at (1,1){$\star$};
\setcounter{Edge}{1}
\setcounter{Vertex}{1}
\bugddux {1.5}1{b2}
\node(eq) at (2.5,1){$=$};
\setcounter{Edge}{1}
\setcounter{Vertex}{1}
\bugddux 31{b3}
\bugddux 41{b4}
\node(eq) at (5,1){$+$};
\setcounter{Edge}{1}
\setcounter{Vertex}{1}

\bugddux {5.5}0{b5}
\bugddux {6}{2}{b6}
\draw (d2b6) edge[in=90,out=270] (u1b5);
\node(eq) at (7,1){$+$};
\setcounter{Edge}{1}
\setcounter{Vertex}{1}

\bugddux {7.5}0{b7}
\bugddux {8}{2}{b8}
\draw (d1b8) edge[in=90,out=270] (u1b7);

\end{tikzpicture}
\end{center}
\caption{An example of computation in $\mathcal {B}$\label{star1}}

\end{figure}

Also compare Figure \ref{prod2} to
\[
\begin{array}{l}
\left(\raa_{\rangle\left(1\atop 1\right)}\raa_{\rangle\left(1\atop 2\right)}\baa_{\left(1\atop 1\right)\langle}\baa_{\left(1\atop 2\right)\langle}\right)^{\star 2}=
\raa_{\rangle\left(1\atop 1\right)}\raa_{\rangle\left(1\atop 2\right)}\raa_{\rangle\left(2\atop 3\right)}\raa_{\rangle\left(2\atop 4\right)}\baa_{\left(1\atop 1\right)\langle}\baa_{\left(1\atop 2\right)\langle}\baa_{\left(2\atop 3\right)\langle}\baa_{\left(2\atop 4\right)\langle}\\+
\raa_{\rangle\left(1\atop 1\right)\left(2\atop 3\right)}\raa_{\rangle\left(1\atop 2\right)}\raa_{\rangle\left(2\atop 4\right)}\baa_{\left(1\atop 1\right)\left(2\atop 3\right)\langle}\baa_{\left(1\atop 2\right)\langle}\baa_{\left(2\atop 4\right)\langle}+
\raa_{\rangle\left(1\atop 1\right)}\raa_{\rangle\left(1\atop 2\right)\left(2\atop 3\right)}\raa_{\rangle\left(2\atop 4\right)}\baa_{\left(1\atop 1\right)\langle}\baa_{\left(1\atop 2\right)\left(2\atop 3\right)\langle}\baa_{\left(2\atop 4\right)\langle}\\
+
\raa_{\rangle\left(1\atop 1\right)\left(2\atop 2\right)}\raa_{\rangle\left(1\atop 2\right)}\raa_{\rangle\left(2\atop 3\right)}\baa_{\left(1\atop 1\right)\left(2\atop 2\right)\langle}\baa_{\left(1\atop 2\right)\langle}\baa_{\left(2\atop 3\right)\langle}+
\raa_{\rangle\left(1\atop 1\right)}\raa_{\rangle\left(1\atop 2\right)\left(2\atop 4\right)}\raa_{\rangle\left(2\atop 3\right)}\baa_{\left(1\atop 1\right)\langle}\baa_{\left(1\atop 2\right)\left(2\atop 4\right)\langle}\baa_{\left(2\atop 3\right)\langle}\\
+
\raa_{\rangle\left(1\atop 1\right)\left(2\atop 3\right)}\raa_{\rangle\left(1\atop 2\right)\left(2\atop 4\right)}\baa_{\left(1\atop 1\right)\left(2\atop 3\right)\langle}\baa_{\left(1\atop 2\right)\left(2\atop 4\right)\langle}
+
\raa_{\rangle\left(1\atop 1\right)\left(2\atop 4\right)}\raa_{\rangle\left(1\atop 3\right)\left(2\atop 4\right)}\baa_{\left(1\atop 1\right)\left(2\atop 4\right)\langle}\baa_{\left(1\atop 2\right)\left(2\atop 3\right)\langle}
\end{array}
\]
\end{example}
\begin{figure}[h]
\begin{center}
\begin{tikzpicture}
\setcounter{Edge}{1}
\setcounter{Vertex}{1}

\bugdduu 01	{b1}
\node(star) at (1,1){$\star$};
\setcounter{Edge}{1}
\setcounter{Vertex}{1}

\bugdduu {1.5}1{b2}
\node(eq) at (2.5,1){$=$};
\setcounter{Edge}{1}
\setcounter{Vertex}{1}

\bugdduu 31	{b3}
\bugdduu 41{b4}
\node(p1) at (5,1){$+$};
\setcounter{Edge}{1}
\setcounter{Vertex}{1}
\bugdduu {5.5}0 {b5}	
\bugdduu {6}2 {b6}
\draw (d1b6) edge[in=90,out=270] (u1b5);

\node(p1) at (7,1){$+$};
\setcounter{Edge}{1}
\setcounter{Vertex}{1}
\bugdduu {7.5}0 {b7}	
\bugdduu {8}2 {b8}
\draw (d2b8) edge[in=90,out=270] (u1b7);

\node(p1) at (9,1){$+$};
\setcounter{Edge}{1}
\setcounter{Vertex}{1}
\bugdduu {9.5}0{b9}	
\bugdduu {10}2{b10}
\draw (d1b10) edge[in=90,out=270] (u2b9);

\node(p1) at (11,1){$+$};
\setcounter{Edge}{1}
\setcounter{Vertex}{1}
\bugdduu {11.5}0{b11}	
\bugdduu {12}2{b12}
\draw (d2b12) edge[in=90,out=270] (u2b11);
	
\node(p1) at (2.5,-2) {$+$};
\setcounter{Edge}{1}
\setcounter{Vertex}{1}

\bugdduu 3{-3}{b13}
\bugdduu 3{-1}{b14}
\draw (d1b14) edge[in=90,out=270] (u1b13);
\draw (d2b14) edge[in=90,out=270] (u2b13);

\node(p1) at (4,-2) {$+$};
\setcounter{Edge}{1}
\setcounter{Vertex}{1}

\bugdduu {4.5}{-3}{b15}
\bugdduu {4.5}{-1}{b16}
\draw (d2b16) edge[in=90,out=270] (u1b15);
\draw (d1b16) edge[in=90,out=270] (u2b15);

\end{tikzpicture}
\end{center}
\caption{A second example of computation in $\mathcal B$\label{prod2}}
\end{figure}

As a consequence, one has
\begin{corollary}
The algebra $\mathcal {B}$ is associative.
\end{corollary}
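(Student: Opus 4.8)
The plan is to obtain associativity of $(\mathcal B,\star)$ essentially for free, by transporting it across the isomorphism $\mathtt w$ furnished by the preceding theorem. First I would record that $\hat{\mathcal F}$ is an associative algebra: the shifted product $\star$ was \emph{defined} on $\mathbb C\langle\mathcal A\rangle$ to be an associative product; the Claim preceding the definition of $\hat{\mathcal F}$ shows that $\mathcal J_F$ is a two-sided $\star$-ideal, so $\star$ passes to the quotient $\mathcal F$, and that $\widetilde{\mathcal F}$ (hence in particular $\mathcal F_{\mathcal H}$) is $\star$-stable. Therefore $(\mathcal F,\star)=\hat{\mathcal F}$ is associative, and so is each of its subalgebras; in particular $\mathcal F_{\mathcal H}$ is associative.

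Next I would invoke the theorem, which provides a linear bijection $\mathtt w\colon\mathcal B\to\mathcal F_{\mathcal H}$ satisfying $\mathtt w(G\star G')=\mathtt w(G)\star\mathtt w(G')$ for all B-diagrams $G,G'$. Since both products are bilinear, this multiplicativity identity propagates from basis elements to arbitrary elements of $\mathcal B$, so $\mathtt w$ is an algebra isomorphism in the strict sense. Associativity is then pulled back: for any $x,y,z\in\mathcal B$,
\[
\mathtt w((x\star y)\star z)=(\mathtt w(x)\star\mathtt w(y))\star\mathtt w(z)=\mathtt w(x)\star(\mathtt w(y)\star\mathtt w(z))=\mathtt w(x\star(y\star z)),
\]
and the injectivity of $\mathtt w$ (Lemma~\ref{LbasisFH}, or the remark that $\mathtt w$ is into) forces $(x\star y)\star z=x\star(y\star z)$.

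I do not anticipate a genuine obstacle here: all the substance is already contained in the theorem, and the corollary is pure transport of structure. The single point deserving one line of care is that the identity $\mathtt w(G\star G')=\mathtt w(G)\star\mathtt w(G')$ is established only for \emph{pairs of B-diagrams}, so one should note explicitly that it extends to all linear combinations by bilinearity of $\star$ on both sides before concluding. A reader wanting an argument internal to $\mathcal B$ could instead check directly that the summands of $(G_1\star G_2)\star G_3$ and of $G_1\star(G_2\star G_3)$ are both in natural bijection with triples of compatible matchings of free half-edges among the three diagrams, a description manifestly symmetric in the order of multiplication; but the route through $\hat{\mathcal F}$ above is shorter and is precisely what the section has been set up to deliver.
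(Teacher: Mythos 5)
Your proposal is correct and follows exactly the paper's route: the corollary is stated as an immediate consequence of the isomorphism $\mathtt w\colon\mathcal B\to\mathcal F_{\mathcal H}$, with associativity transported back from $\hat{\mathcal F}$ (where $\star$ is associative by construction). Your closing remark about matching summands directly is precisely the alternative combinatorial argument the paper also sketches via the identity relating $\begin{array}{c}G'\\\comp{\alpha_1,\dots,\alpha_p}{\beta_1,\dots,\beta_p}\\G\end{array}\star G''$ to compositions with $G'\star G''$.
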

Alternatively, this result can be shown in a combinatorial way.
First, we extend bi-linearly the composition $\comp{a_1,\dots,a_k}{b_1,\dots,b_k}$.
Hence, we observe
\begin{equation}\label{GstarvG}
\begin{array}{c}G'\\\comp{\alpha_1,\dots,\alpha_p}{\beta_1,\dots,\beta_p}\\G \end{array}\star G''=\sum \begin{array}{c}G'\star G''\\\comp{a_1,\dots,a_k}{b_1,\dots,b_k}\\ G \end{array},%
\end{equation}
where the sum is over the sequences $1\leq a_1<\cdots<a_k\leq\omega(G) $  and the sequences of distinct elements $1\leq b_1,\dots,b_k\leq\omega(G')+\omega(G'')$ such that there exists $1\leq i_1<\cdots<i_p$ such that $a_{i_1}=\alpha_1$, $b_{i_1}=\beta_1$, $\dots$,  $a_{i_p}=\alpha_p$, $b_{i_p}=\beta_p$.\\

From (\ref{starHW2}) we obtain,
\[
(G\star G')\star G''=\sum_{1\leq \alpha_1<\dots<\alpha_p\leq \omega(G)\atop
1\leq \beta_1, \dots, \beta_p\leq \omega(G') \mbox{\tiny distinct}} \begin{array}{c}G'\\\comp{\alpha_1,\dots,\alpha_p}{\beta_1,\dots,\beta_p}\\G \end{array}\star G''
=\sum_{1\leq a_1<\dots<a_k\leq \omega(G)\atop
1\leq b_1, \dots, b_k\leq \omega(G')+\omega(G'') \mbox{\tiny distinct}
} \begin{array}{c}G'\star G''\\\comp{a_1,\dots,a_k}{b_1,\dots,b_k}\\G \end{array}=G\star (G'\star G'').
\]

\subsection{Application to the boson normal ordering}
The Heisenberg-Weyl algebra is defined as the quotient $\mathcal {HW}=\mathbb C\langle \{\aa,\aa^\dag\}\rangle/_{\mathcal J_{\mathcal {HW}}}$, where $\mathcal J_{\mathcal {HW}}$ is the ideal generated by  $\aa\aa^\dag-\aa^\dag \aa-1$. The algebra $\mathcal{HW}$  is classically related to the boson normal ordering problem.
We consider a slightly different algebra $\mathcal H$ which is obtained by adding a central element $\ee$ to $\mathcal {HW}$.
We will see that $\ee$ has a natural combinatorial interpretation.
Indeed, let us define the map $\mathfrak p:{\mathcal A}\longrightarrow \{\aa,\aa^\dag,\ee\}$ sending each element of $\mathcal A_{\rangle\times}$ to $\aa^\dag$, each element of $\mathcal A_{\times\langle}$ to $\aa$, and each element of $\mathcal A_{\langle\ \rangle}$ to $\ee$.
The map $\mathfrak p$ can be extended as linear maps  $\widetilde{\mathfrak p}:\widetilde{\mathcal F}\longrightarrow \mathcal{H}$,
$\hat{\mathfrak p}:\hat{\mathcal F}\longrightarrow \mathcal{H}$, and ${\mathfrak p}_{\mathcal H}:{\mathcal F}_{\mathcal H}\longrightarrow \mathcal{H}$.
We also define  the linear map $\mathfrak p_{\mathcal B}:\mathcal B\longrightarrow\mathcal H$ by
\begin{equation}\label{PB1}
\mathfrak p_{\mathcal B}(G)=\left(\aa^\dag\right)^{\#\{p\in \mathtt{Paths}(G):\bullet_p=\rangle\}}\aa^{\#\{p\in \mathtt{Paths}(G):\diamond_p=\langle\}}
\ee^{\#\{p\in \mathtt{Paths}(G):\bullet_p=\langle\}+\#\{p\in \mathtt{Paths}(G):\diamond_p=\rangle\}}.
\end{equation}
Equivalently, from Remark \ref{rp2h}, one has
\begin{equation}
\mathfrak p_{\mathcal B}(G)=\left(\aa^\dag\right)^{h_f^\downarrow(G)}\aa^{h_f^\uparrow(G)}
\ee^{h_c(G)}.
\end{equation}

\begin{example}\rm
If $G$ is the B-diagram of Figure \ref{bdiag1}, one has
\[
\mathfrak p_{\mathcal B}(G)=\left(\aa^\dag\right)^{3}\aa^2\ee.
\]
\end{example}

From (\ref{PB1}) it is easy to check that $\mathfrak p_{\mathcal B}$ factorizes through $\mathcal F_{\mathcal H}$. More precisely
\begin{equation}\label{p=pw}
\mathfrak p_{\mathcal B}=\mathfrak p_{\mathcal H}\circ \mathtt w.
\end{equation}
Furthermore, one has
\begin{proposition}
The maps $\widetilde{\mathfrak p}$, $\hat{\mathfrak p}$, ${\mathfrak p}_{\mathcal H}$, and $\mathfrak p_{\mathcal B}(G)$ are morphisms of algebras.
\end{proposition}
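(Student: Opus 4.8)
The plan is to lift everything to the free algebra $\mathbb C\langle\mathcal A\rangle$. Since that algebra is free associative, the set map $\mathfrak p$ extends uniquely to an algebra morphism $\mathbb C\langle\mathcal A\rangle\to\mathcal H$ for the concatenation product; I keep the name $\mathfrak p$ for it. First I would record two elementary facts. Firstly, $\mathfrak p$ is insensitive to the shift operators: $\mathfrak p(\mathtt w[m,n])=\mathfrak p(\mathtt w)$ for every word $\mathtt w\in\mathcal A^*$ and all $m,n$, because $[m,n]$ only alters the integer labels inside a letter and never changes which of the four classes $\mathcal A_{\langle\times},\mathcal A_{\rangle\times},\mathcal A_{\times\langle},\mathcal A_{\times\rangle}$ that letter belongs to; hence $\mathfrak p(\mathtt u\star\mathtt v)=\mathfrak p(\mathtt u\,\mathtt v[|\mathtt u|,\omega(\mathtt u)])=\mathfrak p(\mathtt u)\mathfrak p(\mathtt v)$, so the \emph{same} linear map is simultaneously a morphism for $\star$. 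Secondly, $\mathfrak p(\mathcal A_{\langle\ \rangle})=\{\ee\}$ is contained in the centre of $\mathcal H$, while $\mathfrak p(\mathcal A_{\bullet\times})\subseteq\{\aa^\dag,\ee\}$ and $\mathfrak p(\mathcal A_{\times\diamond})\subseteq\{\aa,\ee\}$ consist of pairwise commuting elements of $\mathcal H$.

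The core step is to check that $\mathfrak p$ annihilates the ideal $\mathcal J_F$, which I would do on each family of generators from Definition \ref{dfusion}. Generators of type \ref{P1f} go to $0$ since $\mathfrak p(\mathtt e)=\ee$ is central; generators of type \ref{P2f} go to $0$ by the commuting remark above. For a generator of type \ref{P3f} one reads off the images: $\raa_{\bullet\alpha}$ and $\raa_{\bullet\alpha\beta}$ have a common image $x\in\{\ee,\aa^\dag\}$ depending only on $\bullet$, $\baa_{\beta\diamond}$ and $\baa_{\alpha\beta\diamond}$ have a common image $y\in\{\aa,\ee\}$ depending only on $\diamond$, while $\baa_{\alpha\langle}\mapsto\aa$ and $\raa_{\rangle\beta}\mapsto\aa^\dag$. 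Writing $U,V,X,Y$ for the images of $\mathtt u,\mathtt v,\mathtt x,\mathtt y$, the generator is sent to
\[
U\,x\,V\,(\aa\aa^\dag-\aa^\dag\aa-1)\,X\,y\,Y,
\]
which vanishes because $\aa\aa^\dag-\aa^\dag\aa-1=0$ already holds in $\mathcal{HW}$ and $\ee$ is central in $\mathcal H$. This single computation is the whole point of relation \ref{P3f}: it is exactly the Heisenberg commutation relation in disguise, the $-1$ in its third summand matching the $-1$ in $\aa\aa^\dag-\aa^\dag\aa-1$. I do not anticipate a genuine obstacle here, only careful bookkeeping of the four letter classes, so this is the "hard" part merely in the sense of being the only part with content.

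It then remains to propagate this down. Because $\mathcal J_F$ is a two-sided ideal for the concatenation product by construction and for $\star$ by the Claim that makes $\star$ well defined on $\mathcal F$, the morphism $\mathfrak p$ descends to algebra morphisms $\mathcal F\to\mathcal H$ and $\hat{\mathcal F}\to\mathcal H$; the latter is $\hat{\mathfrak p}$. Restricting $\hat{\mathfrak p}$ to the subalgebras $\widetilde{\mathcal F}$ and $\mathcal F_{\mathcal H}$ yields $\widetilde{\mathfrak p}$ and $\mathfrak p_{\mathcal H}$, so these are morphisms as well (and $\widetilde{\mathfrak p}$ is at the same time a morphism for concatenation, by the first paragraph). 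Finally, identity (\ref{p=pw}) gives $\mathfrak p_{\mathcal B}=\mathfrak p_{\mathcal H}\circ\mathtt w$, and $\mathtt w:\mathcal B\to\mathcal F_{\mathcal H}$ is the algebra isomorphism established in the theorem above; hence $\mathfrak p_{\mathcal B}$ is a composite of algebra morphisms and therefore a morphism of algebras. (A more combinatorial alternative for $\mathfrak p_{\mathcal B}$ would invoke the path-composition formula (\ref{comp2Path}) together with Remark \ref{rp2h} directly, but routing through $\mathtt w$ is shorter.)
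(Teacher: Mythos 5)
Your proposal is correct and follows essentially the same route as the paper: the paper dismisses $\widetilde{\mathfrak p}$, $\hat{\mathfrak p}$, ${\mathfrak p}_{\mathcal H}$ as straightforward from the definitions and obtains $\mathfrak p_{\mathcal B}=\mathfrak p_{\mathcal H}\circ\mathtt w$ as a composite of morphisms, exactly as you do. Your added content --- checking that $\mathfrak p$ annihilates each family of generators of $\mathcal J_F$, with relation~3 mapping to $UxV(\aa\aa^\dag-\aa^\dag\aa-1)XyY=0$, and that the shift $[m,n]$ preserves letter classes so $\mathfrak p$ is also a $\star$-morphism --- is just the bookkeeping the paper leaves implicit, and it is carried out correctly.
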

\begin{proof}
The fact that $\widetilde{\mathfrak p}$, $\hat{\mathfrak p}$, and ${\mathfrak p}_{\mathcal H}$ are morphisms is straightforward from their definitions.
The map $\mathfrak p_{\mathcal B}(G)$ is a morphism of algebras because it is the composition of two morphisms of algebras (see Formula (\ref{p=pw})).
\end{proof}

We easily check the product formula
\begin{equation}\label{multform}
\left(\aa^\dag\right)^m\aa^n\ee^q.\left(\aa^\dag\right)^r\aa^s\ee^t=\sum_{i=0}^{\min\{n,r\}}i!\binom{q}i\binom ri \left(\aa^\dag\right)^{m+r-i}\aa^{n+s-i}\ee^{q+t}.
\end{equation}
This formula has the following combinatorial interpretation. Consider two B-diagrams $G$ and $G'$, there are $i!\binom{h_f^\uparrow(G)}i\binom {h_f^\downarrow(G')}i$ ways to compose $G$ with $G'$ and obtain a B-diagram $G''$ such that $h_f^\downarrow(G'')=h_f^\downarrow(G)+h_f^\downarrow(G')-i$, $h_f^\uparrow(G'')=h_f^\uparrow(G)+h_f^\uparrow(G')-i$, and $h_c(G'')=h_c(G)+h_c(G)$.
\begin{example}\rm Compare Figure \ref{prod2} to
\[
\left(\aa^\dag\right)^2\aa^2.\left(\aa^\dag\right)^2\aa^2=\left(\aa^\dag\right)^4\aa^4+4\left(\aa^\dag\right)^3\aa^3+2\left(\aa^\dag\right)^2\aa^2.
\]
\end{example}

The relationships between the algebras defined in this section are summarized in Figure \ref{comdiag} where the dashed arrow indicates that we replace the product in $\widetilde{\mathcal F}$ by a shifted product.

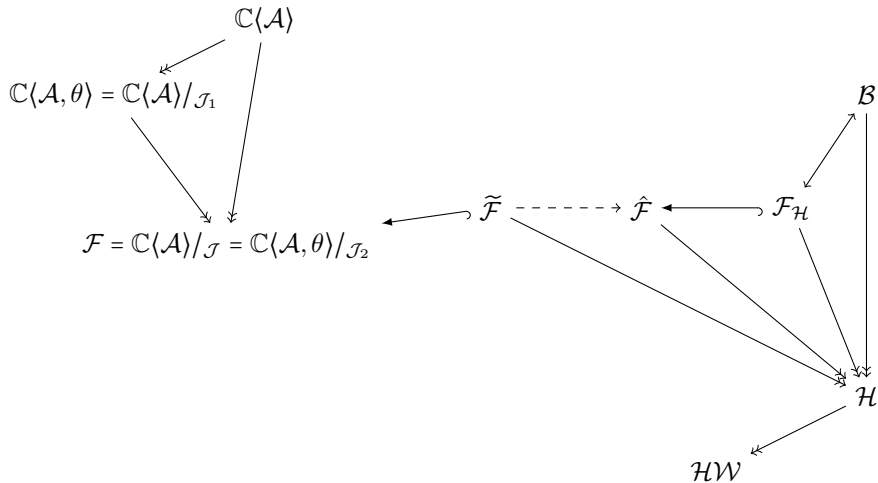
\begin{figure}[H]
\begin{center}
\begin{tikzpicture}
\node (cA) at (2,6) {$\mathbb C\langle\mathcal A\rangle$};
\node (cATheta) at (0,5) {$\mathbb C\langle\mathcal A,\theta\rangle=\mathbb C\langle\mathcal A\rangle/_{\mathcal J_1}$};
\node (F) at (1.5,3) {$\mathcal F=\mathbb C\langle\mathcal A\rangle/_{\mathcal J}=\mathbb C\langle\mathcal A,\theta\rangle/_{\mathcal J_2}$};
\path[->>] (cA) edge (cATheta);
\path[->>] (cATheta) edge (F);
\path[->>] (cA) edge (F);
\node (tF) at (5,3.5) {$\widetilde{\mathcal F}$};
\path[>=latex,right hook->] (tF) edge (F);
\node (hF) at (7,3.5) {$\hat{\mathcal F}$};
\path[dashed,<-] (hF) edge (tF);
\node (FH) at (9,3.5) {${\mathcal F}_{\mathcal H}$};
\path[>=latex,right hook->] (FH) edge (hF);
\node (B) at (10,5) {$\mathcal {B}$};
\path[<->] (FH) edge (B);
\node (H) at (10,1) {$\mathcal H$};
\node (HW) at (8,0) {$\mathcal {HW}$};
\path[->>] (B) edge (H);
\path[->>] (tF) edge (H);
\path[->>] (hF) edge (H);
\path[->>] (FH) edge (H);
\path[->>] (H) edge (HW);
\end{tikzpicture}
\end{center}
\caption{The different algebras related to the B-diagrams \label{comdiag}}

\end{figure}

\section{Cogebraic aspects of B-diagrams\label{Hopf}}
 \subsection{The Hopf algebras of B-diagrams}
 We define the linear map $\Delta:\mathcal B\longrightarrow\mathcal B\otimes\mathcal B$ by setting
\begin{equation}\label{DeltaDef1}
\Delta(G)=\sum_{I\in\mathrm{Iso}(G)}G[I]\otimes G[\complement I].\end{equation}
Equivalently, from claim \ref{isolation}, one has
\begin{equation}\label{DeltaDef2}
\Delta(G)=\sum_{\mathcal I\subset\mathrm{Connected}(G)}G\left[\bigcup_{I\in\mathcal I}I\right] \otimes 	G\left[\complement_G\bigcup_{I\in\mathcal I}I\right].\end{equation}
For simplicity we set 
\[
G\langle I\rangle=\left\{\begin{array}{ll}G[I]&\mbox{ if }I\in\mathrm{Iso}(G)\\0&\mbox{ otherwise} \end{array}\right. .
\]
With this notation, one has
\begin{equation}\label{DeltaDef3}
\Delta(G)=\sum_{I\uplus J=\llbracket 1,|G|\rrbracket}G\langle I\rangle\otimes G\langle J\rangle.\end{equation}

We also define  $\epsilon:\mathcal B\longrightarrow\mathbb C$ as the projection to $\mathcal B_0$. Obviously, $\Delta$ is a coassociative, cocomutative product and $\epsilon$ is its counity. So,
\begin{proposition}
$(\mathcal B,\Delta,\epsilon)$ is a connected graded co-commutative cogebra.
\end{proposition}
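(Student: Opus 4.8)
The plan is to observe that, read through Claim~\ref{isolation}, the map $\Delta$ is nothing more than the splitting (unshuffling) coproduct on the finite set $\mathrm{Connected}(G)$ of connected components of $G$, and then to harvest all four required properties from that description. Concretely, by the equivalence (1)$\Leftrightarrow$(3) of Claim~\ref{isolation} the assignment $\mathcal S\mapsto\bigcup_{I\in\mathcal S}I$ is a bijection from the subsets of $\mathrm{Connected}(G)$ onto $\mathrm{Iso}(G)$ carrying complementation to complementation, so (\ref{DeltaDef2}) reads $\Delta(G)=\sum_{\mathcal S\subseteq\mathrm{Connected}(G)}G[\bigcup\mathcal S]\otimes G[\complement_G\bigcup\mathcal S]$; this is a finite sum landing in $\mathcal B\otimes\mathcal B$, so $\Delta$ is well defined. \emph{Cocommutativity} is then immediate: exchanging the two tensorands replaces $\mathcal S$ by $\mathcal S^{c}$ and leaves the sum unchanged (equivalently, use the symmetry $I\leftrightarrow\complement_G I$ in (\ref{DeltaDef3}) together with Claim~\ref{isolation}(1)$\Leftrightarrow$(2)). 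For the \emph{counit axiom}, recall from the remark following Claim~\ref{isolation} that both the empty sequence and the full sequence $\llbracket 1,|G|\rrbracket$ are isolated in $G$, that $G[\emptyset]=\varepsilon$ and $G[\llbracket 1,|G|\rrbracket]=G$, and that $\epsilon(\varepsilon)=1$ while $\epsilon(G)=0$ whenever $\omega(G)>0$; hence applying $\epsilon\otimes\mathrm{id}$ to (\ref{DeltaDef3}) only the term $I=\emptyset$ survives and applying $\mathrm{id}\otimes\epsilon$ only the term $J=\emptyset$ survives, giving $(\epsilon\otimes\mathrm{id})\Delta(G)=1\otimes G$ and $(\mathrm{id}\otimes\epsilon)\Delta(G)=G\otimes 1$.

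Next I would dispatch the grading and connectedness. From Definition~\ref{subdiag}, $G[I]$ has $\#I$ vertices and half-edge multiset $[\lambda_{i_1},\dots,\lambda_{i_{n'}}]$, so $\omega(G[I])+\omega(G[\complement_G I])=\omega(G)$; therefore $\Delta(\mathcal B_k)\subseteq\bigoplus_{a+b=k}\mathcal B_a\otimes\mathcal B_b$. Since $\mathcal B_0=\mathbb C\varepsilon$ is one-dimensional ($\varepsilon$ being the unique diagram of weight $0$) and $\epsilon$ is the projection onto it, the coalgebra is connected and graded, once coassociativity is in hand.

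The remaining and only substantial point is \emph{coassociativity}. Here the auxiliary lemma I would establish is that sub-diagram extraction is functorial and behaves well with respect to connectedness: (i) for nested selections, $(G[I])[K]=G[I_K]$, where $I_K$ is the subsequence of $I$ indexed by the positions listed in $K$; and (ii) if $\mathcal S$ is a union of connected components of $G$, then $\mathrm{Connected}(G[\bigcup\mathcal S])$ consists exactly of the connected components of $G$ contained in $\bigcup\mathcal S$, so that a subsequence of $\bigcup\mathcal S$ is isolated in $G[\bigcup\mathcal S]$ iff the corresponding set of vertices is a union of such components. Both (i) and the edge-preservation underlying (ii) are read directly off item~3 of Definition~\ref{subdiag} (deleting whole components neither destroys nor creates an edge among the retained vertices). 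Granting (i) and (ii), expanding $(\Delta\otimes\mathrm{id})\Delta(G)$ gives $\sum G[K_1]\otimes G[K_2]\otimes G[K_3]$ over ordered triples of disjoint sets with $K_1\cup K_2\cup K_3=\llbracket 1,|G|\rrbracket$, $K_1\cup K_2\in\mathrm{Iso}(G)$, and $K_1\in\mathrm{Iso}(G[K_1\cup K_2])$; by (ii) these two conditions together are equivalent to each of $K_1,K_2,K_3$ being a union of connected components of $G$, a condition symmetric in the three blocks. Expanding $(\mathrm{id}\otimes\Delta)\Delta(G)$ the same way — now via $K_2\cup K_3\in\mathrm{Iso}(G)$ and $K_3\in\mathrm{Iso}(G[K_2\cup K_3])$ — produces the sum over the identical index set, so the two iterated coproducts agree and $\Delta$ is coassociative. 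I expect the bookkeeping for (i) and, above all, the verification of (ii) from Definition~\ref{subdiag} to be the main obstacle; once these are settled, everything else is purely formal.
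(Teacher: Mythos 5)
Your proposal is correct and follows the route the paper itself sets up: equation (\ref{DeltaDef2}) together with Claim \ref{isolation} already present $\Delta$ as the unshuffle coproduct on the set of connected components, from which cocommutativity, the counit axiom, and coassociativity follow, and the grading is checked exactly as you do via $\omega(G[I])+\omega(G[\complement_G I])=\omega(G)$. The only difference is one of thoroughness: the paper's proof declares coassociativity, cocommutativity and the counit ``obvious'' and verifies only the grading, whereas you supply the missing details, in particular the compatibility of sub-diagram extraction with connected components needed for coassociativity.
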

\begin{proof}
Since $\dim\mathcal B_0=1$, 
we have only to check that $\Delta$ is graded. That is $\Delta(\mathcal B_k)\subset\bigoplus_{i+j=k}\mathcal B_i\otimes\mathcal B_j$. This is straightforward from the definition of $\Delta$.
\end{proof}

Since $\mathcal B$ is a connected cogebra and an algebra with finite graded dimension, if it is a bigebra then it is a Hopf algebra. Hence, one has only to prove that $\Delta$ is a morphism of algebra.

Let us prove that for each pair $(G,G')$ of B-diagrams one has
$ \Delta(G \star G') = \Delta(G) \star \Delta(G') $. We start from the equality 
\[
\Delta(G \star G') = \sum_{I\uplus J =\llbracket1, |G|+|G'|\rrbracket} G \star G' \langle I \rangle \otimes G \star G' \langle J \rangle.
\]
Each $I$ appearing in the sum splits into two sets $(I\cap \llbracket 1, |G|\rrbracket)\uplus  (I\cap \llbracket |G| + 1,|G|+|G'|\rrbracket)$. Also, it  is  the case for the set $J$. Hence,
\[
\Delta(G \star G') = \sum_{I\uplus J = \llbracket 1, |G|\rrbracket\atop I'\uplus J'=\llbracket |G| + 1,|G|+|G'|\rrbracket} (G \star G') \langle I \cup I' \rangle \otimes (G \star G') \langle J \cup J' \rangle .
\]
Applying (\ref{starHW2}), we obtain
\[
\Delta(G \star G') 
= \sum_{1 \leqslant a_1< \cdots< a_k \leqslant \omega(G)\atop 1 \leqslant b_1, \dots, b_k \leqslant \omega(G) \mbox{\tiny distinct}} \sum_{I\uplus J=\llbracket 1, |G|\rrbracket \atop I'\uplus J'=\llbracket |G| + 1,|G|+|G'|\rrbracket} 
 \left(\begin{array}{c} G' \\\comp{a_1,\dots,a_k}{b_1, \dots,b_k} \\ G \end{array}\right)\langle I \cup I' \rangle  \otimes 
 \left(\begin{array}{c} G' \\\comp{a_1,\dots,a_k}{b_1, \dots,b_k} \\ G  \end{array}\right)\langle J \cup J' \rangle .
\]
Observe that the elements of $I$ and $J$ define two complementary subdiagrams of $G$ and the elements of $I'$ and $J'$ define two complementary subdiagrams of $G'$. Hence,
\[
\Delta(G \star G') 
= \sum_{I\uplus J=\llbracket 1, |G|\rrbracket \atop I'\uplus J'=\llbracket  1,|G'|\rrbracket}
\sum_{1 \leqslant a_1< \cdots< a_k \leqslant \omega(G)\atop 1 \leqslant b_1, \dots, b_k \leqslant \omega(G) \mbox{\tiny distinct}} 
 \left(\begin{array}{c} G' \langle I' \rangle \\\comp{[a_1,\dots,a_k]^{I}}{[b_1, \dots,b_k]^{I'}} \\ G\langle I \rangle \end{array}\right) \otimes 
 \left(\begin{array}{c} G' \langle J' \rangle \\\comp{[a_1,\dots,a_{k}]^{J}}{[b_1^{J'}, \dots,b_{k}]^{J'}} \\ G\langle J \rangle \end{array}\right) \\
\]
where the sequence $[a_1,\dots,a_k]^I$ (resp. $[a_1,\dots,a_k]^J$, $[b_1,\dots,b_k]^{I'}$ and $[b_1,\dots,b_k]^{J'}$) denotes the image of subsequence of $[a_1,\dots,a_k]$  (resp. $[a_1,\dots,a_k]$, $[b_1,\dots,b_k]$ and $[b_1,\dots,b_k]$) in $G[I]$ (resp. $G[J]$, $G[I']$ and $G[J']$). In other words, each sequence $1 \leqslant a_1< \cdots< a_k \leqslant \omega(G)$ (resp. $b_1,\dots,b_k$) splits into two subsequences one corresponding to half edges in $G[I]$ (resp. $G[I']$) and the second to half edges in $G[J]$ (resp. $G[J']$). Hence, we deduce
\[
\Delta(G \star G') 
= \sum_{I\uplus J= \llbracket 1, |G|\rrbracket \atop I'\uplus J'=\llbracket  1,|G'|\rrbracket}
\sum_{1 \leqslant a_1< \cdots< a_l \leqslant \omega(G\langle I\rangle) \atop 1 \leqslant b_1, \dots, b_l \leqslant \omega(G'\langle I'\rangle)\mbox{ \tiny distinct}} 
\sum_{1 \leqslant c_1< \dots,<c_t \leqslant \omega(G\langle J\rangle)\atop 1 \leqslant d_1, \dots, d_t \leqslant \omega(G'\langle J'\rangle)\mbox{ \tiny distinct}}
 \left(\begin{array}{c} G' \langle I' \rangle \\\comp{a_1,\dots,a_l}{b_1, \dots,b_l} \\ G\langle I \rangle \end{array}\right) \otimes 
 \left(\begin{array}{c} G' \langle J' \rangle \\\comp{c_1,\dots,c_t}{d_1, \dots,d_t} \\ G\langle J \rangle \end{array}\right). 
\]
Finally, one computes
\begin{align*}
\Delta(G \star G') 
 &= \sum_{I\uplus J= \llbracket 1, |G|\rrbracket\atop I'\uplus J'=\llbracket  1,|G'|\rrbracket}
\left(G\langle I \rangle \star G'\langle I' \rangle\right) \otimes \left(G\langle J \rangle \star G'\langle J' \rangle\right)\\
 &=\left(\sum_{I\uplus J= \llbracket 1, |G|\rrbracket } G\langle I \rangle \otimes G\langle J \rangle \right) \star
  \left(\sum_{I'\uplus J'= \llbracket 1,  |G'|\rrbracket} G'\langle I' \rangle \otimes G'\langle J' \rangle \right) \\
 &= \Delta(G) \star \Delta(G').
\end{align*}
This shows that $\mathcal B$ is a graded bialgebra with finite dimensional graded component. Hence,
\begin{theorem}
$(\mathcal B,\star,\Delta)$ is a graded Hopf algebra.
\end{theorem}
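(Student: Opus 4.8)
The plan is to derive the theorem from what has just been proved together with the standard fact that a connected graded bialgebra with finite-dimensional graded components is automatically a Hopf algebra. First I would assemble the bialgebra axioms. By the Claim preceding the statement, $(\mathcal B,\star)$ is an associative unital graded algebra with unit $\varepsilon$ and with each $\mathcal B_k$ finite-dimensional; by the Proposition above, $(\mathcal B,\Delta,\epsilon)$ is a coassociative, cocommutative, counital graded coalgebra which is connected, $\dim\mathcal B_0=1$. It remains to check that $\Delta$ and $\epsilon$ are morphisms of unital algebras. The multiplicativity $\Delta(G\star G')=\Delta(G)\star\Delta(G')$ is exactly the computation carried out in the paragraphs just above, and $\Delta(\varepsilon)=\varepsilon\otimes\varepsilon$ since $\varepsilon$ is the only B-diagram of weight $0$ and it is isolated in itself. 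That $\epsilon$ is multiplicative, $\epsilon(G\star G')=\epsilon(G)\epsilon(G')$, and unital, $\epsilon(\varepsilon)=1$, follows from the fact that $\epsilon$ is the projection onto $\mathcal B_0$ together with the additivity of $\omega$ under composition: a product of B-diagrams of positive weight has only positive-weight components, so the only way $G\star G'$ can contribute to $\mathcal B_0$ is $G=G'=\varepsilon$. Hence $(\mathcal B,\star,\varepsilon,\Delta,\epsilon)$ is a graded bialgebra with finite-dimensional graded pieces.

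Second, I would construct the antipode by induction on the weight. Since $\mathcal B=\bigoplus_{k\geq0}\mathcal B_k$ with $\mathcal B_0=\mathbb C\varepsilon$, for a B-diagram $G$ of positive weight we have $\Delta(G)=G\otimes\varepsilon+\varepsilon\otimes G+\overline\Delta(G)$, where the reduced coproduct $\overline\Delta(G)=\sum G_{(1)}\otimes G_{(2)}$ involves, by gradedness, only B-diagrams of weight strictly smaller than $\omega(G)$ in each tensor slot (here one uses $\lambda_i\geq1$ for all $i$, so that a sub B-diagram of full weight must contain all vertices). Define $S(\varepsilon)=\varepsilon$ and, recursively, $S(G)=-G-\sum S(G_{(1)})\star G_{(2)}$, extended linearly to $\mathcal B$; this is well defined by induction on $\omega(G)$ and clearly preserves the weight. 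A routine verification, using coassociativity to expand $(\mathrm{id}\otimes\Delta)\overline\Delta$ and the inductive hypothesis, shows that $m\circ(S\otimes\mathrm{id})\circ\Delta=\eta\circ\epsilon$, and cocommutativity (or the mirror recursion) gives $m\circ(\mathrm{id}\otimes S)\circ\Delta=\eta\circ\epsilon$ as well, so $S$ is a two-sided convolution inverse of $\mathrm{id}_{\mathcal B}$ in $\mathrm{Hom}(\mathcal B,\mathcal B)$. Therefore $(\mathcal B,\star,\Delta)$ is a Hopf algebra, graded because $S$ is homogeneous.

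The closest thing to an obstacle is purely bookkeeping: one must make sure $\epsilon$ genuinely respects $\star$, which amounts to the remark that the weight is additive under composition so no stray weight-$0$ diagram appears in a product of nontrivial diagrams — and that the reduced coproduct lands in strictly lower degree, which needs $\lambda_i\geq1$. Everything else is the textbook "connected graded bialgebra $\Rightarrow$ Hopf algebra" argument, already alluded to in the sentence preceding the theorem; one could equally well cite that result and skip the explicit recursion for $S$.
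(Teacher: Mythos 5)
Your proposal is correct and follows essentially the same route as the paper: it assembles the graded algebra structure from the Claim, the connected graded coalgebra structure from the Proposition, takes the multiplicativity $\Delta(G\star G')=\Delta(G)\star\Delta(G')$ from the preceding computation, and concludes by the standard ``connected graded bialgebra with finite-dimensional components is a Hopf algebra'' principle, which is exactly what the paper invokes. The only difference is that you spell out the recursive construction of the antipode $S$ and the (trivial) multiplicativity of $\epsilon$, details the paper leaves implicit; both are handled correctly, including the observation that $\lambda_i\geq 1$ forces the reduced coproduct into strictly lower degree.
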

\subsection{Primitive elements}
It is classical and easy to show that the primitive elements endowed with the bracket product $[P,Q]=P\star Q-Q\star P$ form a Lie algebra. The Cartier-Quillen-Milnor-Moore  Theorem (see e.g., \cite{MM}) assures that each graded, connected, and cocommutative Hopf algebra is isomorphic to the universal enveloping of the Lie algebra of its primitive elements. Obviously, it is the case for $\mathcal B$. More precisely, $\mathcal B$ is a Hopf algebra which is graded by the number of half edges of the B-diagrams and the dimensions of its graded components are finite. Since $\mathcal B$ is free on the indivisible B-diagrams, the dimensions of the graded components of the space $\mathtt{Prim}(\mathcal B)$ of the primitive elements are necessarily the same than those of the free Lie algebra  $\mathcal L(\mathbb G)$ generated by the indivisible elements.

Now, let us  recall a few facts about the Eulerian idempotent. The convolution product is classically defined on $Hom(\mathcal B,\mathcal B)$ by $f* g=\mu\circ(f\otimes g)\circ \Delta$ where $\mu$ denotes the linear map from $\mathcal B\otimes \mathcal B$ to $\mathcal B$ sending $P\otimes Q$ to $P\star Q$. 
The Eulerian idempotent is defined by (see e.g., \cite{Reutenauer})
\begin{equation}\label{euler}
\pi_1:=\log_*(Id)=\sum_{k>0}{(-1)^{k+1}\over k}(Id-\xi)^{*k}
\end{equation}
where $\xi$ denotes the unity of the convolution algebra, that is the projection on the space generated by the empty B-diagram $\varepsilon$.
Remarking that $(Id-\xi)^{*k}$ sends to $0$ any B-diagram that can be written as $G_1|\cdots|G_k$ with each $G_i\neq\varepsilon$, $\pi_1$ maps each B-diagram to a finite linear combination of B-diagrams which is known to belong in $\mathtt{Prim}(\mathcal B)$. 
\begin{example}\rm
Examine the following example 
\begin{center}
\begin{tikzpicture}
\setcounter{Edge}{1}
\setcounter{Vertex}{1}
\node (bla) at (-1,1) {\Large $B=$};
\bugddux 00{b1} 
\bugddux 10{b2}
\bugddux 02{b3}
\node (bla) at (2.3,1) {\Large$\displaystyle\mathop{\longrightarrow}^{\pi_1}$};
\draw (d1b3) edge[in=90,out=270]  (u1b1);
\setcounter{Edge}{1}
\setcounter{Vertex}{1}
\bugddux 30{b1} 
\bugddux 40{b2}
\bugddux 32{b3}
\draw (d1b3) edge[in=90,out=270]  (u1b1);
\node (bla) at (5,1) {\Large$-\frac12$};
\setcounter{Edge}{1}
\setcounter{Vertex}{1}
\bugddux {5.5}0{b1} 
\bugddux {5.5}2{b3}
\bugddux {6.5}0{b2}
\draw (d1b3) edge[in=90,out=270]  (u1b1);
\node (bla) at (7.5,1) {\Large$-\frac12$};
\setcounter{Edge}{1}
\setcounter{Vertex}{1}
\bugddux {8}0{b2}
\bugddux {9}0{b1} 
\bugddux {9}2{b3}

\draw (d1b3) edge[in=90,out=270]  (u1b1);
\node (bla) at (10.5,1) {\Large $+C$};

\end{tikzpicture}
\end{center}
where $C$ is a linear combination of connected B-diagrams.
Setting $\hat\Delta=\Delta-Id\otimes \varepsilon-\varepsilon\otimes Id$, we observe the three B-diagrams occurring in the above formula  have the same image by $\hat\Delta$
\begin{center}
\begin{tikzpicture}
\setcounter{Edge}{1}
\setcounter{Vertex}{1}
\bugddux {0}0{b1} 
\bugddux {0}2{b2}
\draw (d1b2) edge[in=90,out=270]  (u1b1);
\node (bla) at (1.25,1) {\Large$\otimes$};
\setcounter{Edge}{1}
\setcounter{Vertex}{1}
\bugddux {2}0{b1} 
\node (bla) at (3,1) {\Large$+$};
\setcounter{Edge}{1}
\setcounter{Vertex}{1}
\bugddux {4}0{b1} 
\node (bla) at (5.25,1) {\Large$\otimes$};
\setcounter{Edge}{1}
\setcounter{Vertex}{1}
\bugddux {6}0{b1} 
\bugddux {6}2{b2}
\draw (d1b2) edge[in=90,out=270]  (u1b1);
\end{tikzpicture}
\end{center}
Hence, $\pi_1(B)$ is primitive.
\end{example}
A fast examination of $\pi_1(G)$, where $G$ is a indivisible $B$-diagram, shows that
$
\pi_1(G)=G+\cdots
$, where $\cdots$ means a linear combination of $B$-diagrams which are non indivisible or have less connected components than $G$. Since, the indivisible $B$-diagrams are algebraically independent, this shows that $\mathtt{Prim}(\mathcal B)$ contains a subalgebra which is isomorphic to $\mathcal L(\mathbb G)$. Hence, the dimensions of the graded components of $\mathtt{Prim}(\mathcal B)$ being the same than those of $\mathcal L(\mathbb G)$, we deduce the following result.
\begin{theorem}
The Lie algebra of the primitive elements of $\mathcal B$ is isomorphic to the free Lie algebra generated by the indivisible B-diagrams. Indeed, $\mathrm{Prim}(\mathcal B)$ is freely generated as a Lie algebra by $\{\pi_1(G):G\in\mathbb G\}$.
\end{theorem}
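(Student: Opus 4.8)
The plan is to combine the Cartier--Quillen--Milnor--Moore theorem with the freeness of $\mathcal B$ as an associative algebra (Proposition \ref{BisFree}) and a unitriangularity argument for the Eulerian idempotent $\pi_1$. Since $\mathcal B$ is a graded, connected, cocommutative Hopf algebra with finite-dimensional graded components, CQMM gives $\mathcal B\cong U(\mathrm{Prim}(\mathcal B))$ as Hopf algebras. On the other hand $\mathcal B\cong\mathbb C\langle\mathbb G\rangle$ as an algebra, each indivisible diagram $G\in\mathbb G$ being homogeneous of degree $\omega(G)$; hence the Hilbert series of $\mathcal B$ is that of the free associative algebra on $\mathbb G$, which is also the series of $U(\mathcal L(\mathbb G))$, where $\mathcal L(\mathbb G)$ is the free Lie algebra on $\mathbb G$. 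Comparing Poincaré--Birkhoff--Witt factorizations of $U(\mathrm{Prim}(\mathcal B))$ and $U(\mathcal L(\mathbb G))$ then forces $\dim\mathrm{Prim}(\mathcal B)_n=\dim\mathcal L(\mathbb G)_n$ for every $n$.

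Next I would make precise the triangularity of $\pi_1$ sketched in the text. Expanding $\pi_1=\sum_{k>0}\frac{(-1)^{k+1}}{k}(Id-\xi)^{*k}$ as in (\ref{euler}) and using that $(Id-\xi)^{*k}$ annihilates any diagram of the form $G_1|\cdots|G_k$ with all $G_i\neq\varepsilon$, one gets, for indivisible $G$,
\[
\pi_1(G)=G+R_G,
\]
where $R_G$ is a linear combination of B-diagrams that are either non-indivisible (hence products of several indivisibles) or have strictly fewer connected components than $G$. I would fix on $\mathbb B=\mathbb G^*$ an admissible order for which a word is larger than any word obtained by splitting one of its letters into a product of several indivisibles of smaller weight, refining by the number of connected components; this is exactly the order with respect to which the product $\star$ on $\mathcal B$ is triangular (the term $G|G'$ being the unique maximal term of $G\star G'$). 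With respect to this order $\pi_1$ is unitriangular on $\mathbb G$, so $\{\pi_1(G):G\in\mathbb G\}$ is algebraically independent in the associative algebra $\mathcal B$; therefore the Lie subalgebra $\mathfrak g\subseteq\mathrm{Prim}(\mathcal B)$ it generates is free on this family, i.e.\ $\mathfrak g\cong\mathcal L(\mathbb G)$.

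Finally, $\mathfrak g$ is a graded Lie subalgebra of $\mathrm{Prim}(\mathcal B)$ whose graded dimensions equal $\dim\mathcal L(\mathbb G)_n=\dim\mathrm{Prim}(\mathcal B)_n$ by the first step, so the inclusion $\mathfrak g\hookrightarrow\mathrm{Prim}(\mathcal B)$ is an equality. This gives both assertions of the theorem: $\mathrm{Prim}(\mathcal B)$ is freely generated as a Lie algebra by $\{\pi_1(G):G\in\mathbb G\}$ and is isomorphic to the free Lie algebra on the indivisible B-diagrams.

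The step I expect to be the main obstacle is the triangularity claim: one has to exhibit a genuine grading/order on $\mathbb B=\mathbb G^*$ under which the correction terms $R_G$ in $\pi_1(G)$ are strictly lower than $G$. This rests on two facts already available, namely that $\star$ on $\mathcal B$ is triangular with respect to the number of connected components (so $\mathbb B$ embeds in $\mathcal B$ compatibly with this order) and that distinct indivisible diagrams are algebraically independent; carefully reconciling "fewer connected components" with "more indivisible factors of smaller weight" into a single admissible order is where the care is needed. Everything else (CQMM together with the PBW dimension count, and the concluding dimension comparison) is standard.
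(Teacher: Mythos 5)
Your proposal follows essentially the same route as the paper: invoke Cartier--Quillen--Milnor--Moore together with the freeness of $\mathcal B$ on $\mathbb G$ (Proposition \ref{BisFree}) to equate the graded dimensions of $\mathrm{Prim}(\mathcal B)$ and $\mathcal L(\mathbb G)$, then use the unitriangularity $\pi_1(G)=G+\cdots$ of the Eulerian idempotent on indivisible diagrams to exhibit a free Lie subalgebra on $\{\pi_1(G):G\in\mathbb G\}$, and conclude by comparing dimensions. Your treatment is in fact slightly more careful than the paper's (which only gestures at the order making the correction terms ``lower''), but it is the same argument.
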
 

\section{Two subalgebras\label{subalgebras}}
\subsection{Word symmetric functions\label{WSym}}
In this section, we investigate a combinatorial Hopf algebra whose bases are indexed by set partitions. A \emph{set partitions} is a partition of $\llbracket 1,n\rrbracket$ for a given $n\in \N$. Let $\pi\vDash n$ denote the fact that $\pi$ is a set partition of $\llbracket1,n\rrbracket$ and define $\pi\uplus\pi':=\pi\cup \{\{i_1+n,\dots,i_k+n\}:\{i_1,\dots,i_k\}\in\pi'$ for $\pi \vDash n$. The set of set partitions is endowed with the partial order $\preceq$ of refinement defined by $\pi\preceq\pi'$ if the sets of $\pi'$ are the union of sets of $\pi$. Finally, we say that a set partition $\pi$ is  \emph{indivisible} if $\pi=\pi'\uplus\pi''$ where $\pi'$ and $\pi''$ are set partitions implies either $\pi'=\pi$ or $\pi''=\pi$. 
 
The algebra of word symmetric functions was introduced by Wolf \cite{Wolf} as a noncommutative analogue of the algebra of symmetric functions. The Hopf structure of this algebra was  described in \cite{BZ,BHRZ} (see also \cite{bergeron2005invariants} for the polynomial realization with finite alphabets). In order to avoid confusion with some other  analogues of symmetric functions (see e.g. \cite{ComHopf}), we denote this algebra by $\WSym$ which is described (as an abstract bigebra) as the space spanned by the elements $M_\pi$, where $\pi$ is a set partitions, and  endowed with the product
\begin{equation}\label{wsymproduct}
M_\pi M_{\pi'}=\sum_{\pi\uplus\pi'\preceq\pi''\preceq\{\{1,\dots,n\},\{n+1,\dots,n'+1\}\}}M_{\pi''},
\end{equation}  
if $\pi\vDash n$ and $\pi'\vDash n'$,
 and the coproduct
\begin{equation}\label{wsymcoproduct}
\Delta(M_\pi)=\sum_{\pi=e\uplus f}M_{\std(e)}\otimes M_{\std(f)},
\end{equation}
with $\std(e)=\{\{\phi(i_1),\dots,\phi(i_k)\}:\{i_1,\dots,i_k\}\in e\}$  where $\phi$ is the unique increasing bijection from $\bigcup_{\alpha\in e}\alpha$ to $\llbracket 1,\sum_{\alpha\in e}\#\alpha\rrbracket$. The following result is well known and is an easy consequence of (\ref{wsymproduct}).
\begin{proposition}\label{WSymFree}
The algebra $\WSym$ is freely generated as an algebra by $$\{M_\pi:\pi\mbox{ is an indivisible set partition}\}.$$
\end{proposition}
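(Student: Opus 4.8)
The plan is to mimic the argument used for the algebra of B-diagrams in Propositions~\ref{BMfree} and~\ref{BisFree}: first exhibit a free monoid on the indexing set, then use the triangularity of the product~(\ref{wsymproduct}) to transport freeness to $\WSym$.

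First I would check that the set partitions, equipped with the operation $\uplus$ and unit $\emptyset$, form a free monoid generated by the indivisible set partitions. Given $\pi\vDash n$, call an integer $c\in\llbracket0,n\rrbracket$ a \emph{cut} of $\pi$ if no block of $\pi$ meets both $\llbracket1,c\rrbracket$ and $\llbracket c+1,n\rrbracket$; the cuts form a chain $0=c_0<c_1<\cdots<c_m=n$, and writing $\pi^{(i)}$ for the restriction of $\pi$ to $\llbracket c_{i-1}+1,c_i\rrbracket$ (relabelled by the order-isomorphism onto $\llbracket1,c_i-c_{i-1}\rrbracket$) one gets $\pi=\pi^{(1)}\uplus\cdots\uplus\pi^{(m)}$ with each $\pi^{(i)}$ indivisible. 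Since $\uplus$ never merges blocks, the cuts of $\pi^{(1)}\uplus\cdots\uplus\pi^{(m)}$ are exactly $c_0,\dots,c_m$, so this factorization is the only one into indivisibles; hence the monoid of set partitions under $\uplus$ is free on the indivisible set partitions, just as $(\mathbb B,|)\simeq\mathbb G^*$.

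Next I would record the triangularity of~(\ref{wsymproduct}). In that sum $\pi\uplus\pi'$ is the minimal element (for $\preceq$) of the interval over which $\pi''$ ranges and it occurs with coefficient $1$, so $M_\pi M_{\pi'}=M_{\pi\uplus\pi'}+\sum_{\pi\uplus\pi'\preceq\pi'',\ \pi''\neq\pi\uplus\pi'}M_{\pi''}$. Iterating this, and checking exactly as for the product $\star$ on $\mathcal B$ that every intermediate product $M_\sigma M_{\pi_j}$ again contributes $M_{\sigma\uplus\pi_j}$ plus strictly coarser terms (so that no cancellation occurs and nothing else lands on the concatenation), one obtains for all $\pi_1,\dots,\pi_k$
\[
M_{\pi_1}M_{\pi_2}\cdots M_{\pi_k}=M_{\pi_1\uplus\cdots\uplus\pi_k}+\sum_{\tau}c_\tau\,M_\tau ,
\]
where $\tau$ runs over set partitions strictly coarser than $\pi_1\uplus\cdots\uplus\pi_k$.

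Finally I would conclude as follows. Let $\Phi$ be the algebra morphism from the free associative algebra on $\{X_\pi:\pi\text{ indivisible}\}$ to $\WSym$ defined by $X_\pi\mapsto M_\pi$. By the unique factorization above, the monomials $X_{\pi_1}\cdots X_{\pi_k}$ are in bijection with set partitions via $X_{\pi_1}\cdots X_{\pi_k}\mapsto\pi_1\uplus\cdots\uplus\pi_k$, so $\Phi$ is a graded linear map between two graded spaces whose homogeneous component of each degree $n$ is finite dimensional with a basis indexed by $\{\pi:\pi\vDash n\}$. The displayed identity says that in each degree the matrix of $\Phi$ is triangular with $1$'s on the diagonal for any linear extension of the refinement order $\preceq$; hence $\Phi$ is bijective, hence an isomorphism of algebras, which is precisely the assertion that $\WSym$ is free on $\{M_\pi:\pi\text{ indivisible}\}$.

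There is no genuine obstacle here, this being the same pattern as for $\mathcal B$; the only two points that deserve a line of care are the uniqueness of the $\uplus$-factorization of a set partition into indivisibles and the bookkeeping showing that iterating~(\ref{wsymproduct}) leaves the coefficient of the concatenation equal to $1$.
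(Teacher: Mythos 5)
Your argument is correct and is exactly the ``easy consequence of (\ref{wsymproduct})'' that the paper invokes without writing out: unique factorization of set partitions into indivisibles under $\uplus$, plus unitriangularity of the product with respect to the refinement order $\preceq$ in each finite-dimensional graded component. The paper gives no further detail, so your write-up simply supplies the intended proof, and both of the points you flag as needing care are handled correctly.
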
 
To each set partition $\pi\vDash n$, we associate a B-diagram $b_\pi=(n,[1,\dots,1],\llbracket 1,n\rrbracket,\llbracket 1,n\rrbracket,E_\pi)$ where $E_\pi$ is the set of the pairs $(i,j)$ such that $i,j\in e\in \pi$, and $j=\{\min(\ell)\in e: i<\ell\}$. Graphically, the components of $\pi$ corresponds to the connected sub B-diagrams of $b_\pi$.
\begin{example}\rm Consider in Figure \ref{fbpi}, the graphical representation of 
\[b_{\{\{1,3\},\{2\},\{4,7,8\},\{5,6\}\}}=(8,[1,1,1,1,1,1,1],\llbracket 1,8\rrbracket,\llbracket 1,8\rrbracket,\{(1,3),(4,7),(7,8),(5,6)\}).\]
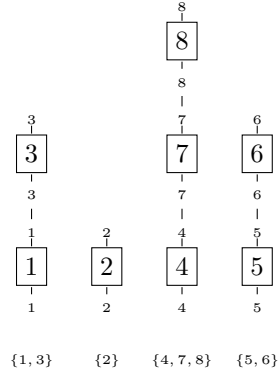
\begin{figure}[h]
\begin{center}
\begin{tikzpicture}
\setcounter{Edge}{1}
\setcounter{Vertex}{1} 
\bugdu 01{b1}
\bugdu 11{b2}
\bugdu 0{2.5}{b3}
\bugdu 21{b4}
\bugdu 31{b5}
\bugdu 3{2.5}{b6}
\bugdu 2{2.5}{b7}
\bugdu 24{b8}
\draw (d1b3) edge[in=90,out=270] (u1b1);
\draw (d1b8) edge[in=90,out=270] (u1b7);
\draw (d1b7) edge[in=90,out=270] (u1b4);
\draw (d1b6) edge[in=90,out=270] (u1b5);
\node (t1) at (0.2,0) {\tiny$\{1,3\}$};
\node (t2) at (1.2,0) {\tiny$\{2\}$};
\node (t3) at (2.2,0) {\tiny$\{4,7,8\}$};
\node (t4) at (3.2,0) {\tiny$\{5,6\}$};
\end{tikzpicture}
\end{center}
\caption{Graphical representation of $b_{\{\{1,3\},\{2\},\{4,7,8\},\{5,6\}\}}$\label{fbpi}}
\end{figure} 
\end{example}
Obviously, the space $\mathcal B^1_1$ generated by the elements $b_\pi$ is stable by product and $\Delta$ sends $\mathcal B^1_1$ to $\mathcal B^1_1\otimes \mathcal B^1_1$. So, $\mathcal B^1_1$ is a sub Hopf algebra of $\mathcal B$. More precisely, one checks
\begin{equation}\label{bprod}
b_\pi \star b_{\pi'}=\sum_{\pi\uplus\pi'\preceq\pi''\preceq\{\{1,\dots,n\},\{n+1,\dots,n'+1\}\}}b_{\pi''},
\end{equation}
if $\pi\vDash n$ and $\pi'\vDash	 n'$,
and
\begin{equation}\label{bcoproduct}
\Delta(b_\pi)=\sum_{\pi=e\uplus f}b_{\std(e)}\otimes b_{\std(f)}.
\end{equation}
Remarking that $\pi$ is indivisible if and only if the B-diagram $b_\pi$ is indivisible, we deduce that $\mathcal B^1_1$ is the free subalgebra of $\mathcal B$ generated by the set $\{b_\pi:\pi\ \mbox{ is an indivisible set partition}\}$.
Hence, comparing (\ref{bprod}) to (\ref{wsymproduct}) and (\ref{bcoproduct}) to (\ref{wsymcoproduct}), Proposition \ref{WSymFree} implies the following result.
\begin{theorem}
The subalgebra $\mathcal B^1_1$ generated by $\{b_\pi:\pi\ \mbox{ is an indivisible set partition}\}$ is a Hopf algebra isomorphic to $\WSym$.
\end{theorem}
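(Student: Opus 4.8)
The plan is to promote the bijective correspondence $M_\pi\leftrightarrow b_\pi$ to an isomorphism of Hopf algebras, almost all of the work having been packaged already into Equations~(\ref{bprod}) and~(\ref{bcoproduct}).

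First I would let $\Phi\colon\WSym\longrightarrow\mathcal B^1_1$ be the linear map determined by $M_\pi\mapsto b_\pi$. Distinct set partitions $\pi\neq\pi'$ yield distinct B-diagrams $b_\pi\neq b_{\pi'}$ (they already differ in their number of vertices or in their edge sets $E_\pi$), and distinct B-diagrams are linearly independent in $\mathcal B$; since $\mathcal B^1_1$ is by definition the span of the $b_\pi$, the family $\{b_\pi\}$ is a basis of it and $\Phi$ is a linear isomorphism, respecting the grading by the number of vertices. Comparing the product rule~(\ref{wsymproduct}) of $\WSym$ with~(\ref{bprod}) --- each is the sum over the set partitions $\pi''$ with $\pi\uplus\pi'\preceq\pi''\preceq\{\{1,\dots,n\},\{n+1,\dots,n+n'\}\}$ --- shows $\Phi(M_\pi M_{\pi'})=\Phi(M_\pi)\star\Phi(M_{\pi'})$, and $\Phi$ sends the unit to $\varepsilon$; comparing~(\ref{wsymcoproduct}) with~(\ref{bcoproduct}) --- each is the sum over the ordered splittings $\pi=e\uplus f$ of the block set, with standardization applied to the two parts --- shows $\Delta\circ\Phi=(\Phi\otimes\Phi)\circ\Delta$, and $\Phi$ clearly commutes with the counits, both being the projection onto degree $0$. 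Thus $\Phi$ is an isomorphism of bialgebras. Since, as observed just before the statement, $\mathcal B^1_1$ is a sub-Hopf-algebra of $\mathcal B$, and both $\WSym$ and $\mathcal B^1_1$ are graded connected bialgebras with finite-dimensional graded components, each carries a unique antipode, necessarily intertwined by $\Phi$; hence $\Phi$ is an isomorphism of Hopf algebras.

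The only real content, and the step I expect to demand care, is the verification of~(\ref{bprod}) and~(\ref{bcoproduct}) themselves at the combinatorial level. For~(\ref{bprod}) one unwinds~(\ref{starHW2}) for the very special diagrams $b_\pi$ --- every vertex carries exactly one inner and one outer half-edge, since all $\lambda_i=1$ --- and checks that each composition $\comp{a_1,\dots,a_k}{b_1,\dots,b_k}$ of $b_\pi$ with $b_{\pi'}$ is again some $b_{\pi''}$, that the edges it creates join blocks of $\pi$ to blocks of $\pi'$ so that $\pi''$ is a common coarsening of $\pi\uplus\pi'$ lying below the two-block partition, and that as the parameters vary each admissible $\pi''$ arises exactly once; this is precisely the bijection between the compositions and the interval $[\pi\uplus\pi',\{\{1,\dots,n\},\{n+1,\dots,n+n'\}\}]$ for $\preceq$. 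For~(\ref{bcoproduct}) one checks, using Claim~\ref{isolation}, that $I$ is isolated in $b_\pi$ exactly when $I$ is a union of blocks of $\pi$, and then identifies $b_\pi\langle I\rangle$ with $b_{\std(\pi|_I)}$, so that~(\ref{DeltaDef3}) specializes to~(\ref{bcoproduct}). Alternatively one could route the argument through freeness --- $\WSym$ is free on the indivisible $M_\pi$ by Proposition~\ref{WSymFree}, $\mathcal B^1_1$ is free on the indivisible $b_\pi$ as recalled above, and $\pi$ is indivisible iff $b_\pi$ is --- obtaining an algebra isomorphism directly from the generators; but one still needs~(\ref{bprod}) and~(\ref{bcoproduct}) to see that this isomorphism agrees with $M_\pi\mapsto b_\pi$ on all set partitions and commutes with $\Delta$.
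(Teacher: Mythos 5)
Your proposal is correct and follows essentially the same route as the paper: the paper likewise reduces the theorem to the comparison of~(\ref{bprod}) with~(\ref{wsymproduct}) and of~(\ref{bcoproduct}) with~(\ref{wsymcoproduct}), invoking the freeness of both algebras on their indivisible generators (Proposition~\ref{WSymFree} and the observation that $\pi$ is indivisible iff $b_\pi$ is), which is the alternative you mention at the end. Your remark that the genuine content lies in verifying~(\ref{bprod}) and~(\ref{bcoproduct}) combinatorially is apt --- the paper asserts these with ``one checks'' --- and your sketch of that verification via~(\ref{starHW2}) and Claim~\ref{isolation} is sound.
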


\begin{example}\rm
For instance, compare
\[ \begin{array}{rcl}M_{\{\{1,3\},\{2\}\}} M_{\{\{1\},\{2\}\}} &=& M_{\{\{1,3\},\{2\},\{4\},\{5\}\}} + M_{\{\{1,3\},\{2,4\},\{5\}\}}
+ 
M_{\{\{1,3,4\},\{2\},\{5\}\}}+
M_{\{\{1,3\},\{2,5\},\{4\}\}}\\&&
+M_{\{\{1,3,5\},\{2\},\{4\}\}}
+
M_{\{\{1,3,4\},\{2,5\}\}}
+
M_{\{\{1,3,5\},\{2,4\}\}},
\end{array}
\]
to the product pictured in Figure \ref{wprod}.
\begin{figure}[h]
\begin{center}
\begin{tikzpicture}
\setcounter{Edge}{1}
\setcounter{Vertex}{1} 
\bugdu 00{b1}
\bugdu {0.5}0{b2}
\bugdu 0{1.5}{b3}
\draw (d1b3) edge[in=90,out=270] (u1b1);
\node (star) at (1.2,0.5) {$\star$};
\setcounter{Edge}{1}
\setcounter{Vertex}{1} 
\bugdu {1.5}0{b1}
\bugdu {2}0{b2}
\node (star) at (2.7,0.5) {$=$};
\setcounter{Edge}{1}
\setcounter{Vertex}{1} 
\bugdu 30{b1}
\bugdu {3.5}0{b2}
\bugdu {3}{1.5}{b3}
\bugdu {4}0{b4}
\bugdu {4.5}0{b5}
\draw (d1b3) edge[in=90,out=270] (u1b1);
\node (p1) at (5.2,0.5) {$+$};
\setcounter{Edge}{1}
\setcounter{Vertex}{1} 
\bugdu {5.5}0{b1}
\bugdu {6}0{b2}
\bugdu {5.5}{1.5}{b3}
\bugdu {5.5}3{b4}
\bugdu {6.5}0{b5}
\draw (d1b3) edge[in=90,out=270] (u1b1);
\draw (d1b4) edge[in=90,out=270] (u1b3);
\node (p1) at (7.2,0.5) {$+$};
\setcounter{Edge}{1}
\setcounter{Vertex}{1} 
\bugdu {7.5}0{b1}
\bugdu {8}0{b2}
\bugdu {7.5}{1.5}{b3}
\bugdu {8}{1.5}{b4}
\bugdu {8.5}0{b5}
\draw (d1b3) edge[in=90,out=270] (u1b1);
\draw (d1b4) edge[in=90,out=270] (u1b2);
\node (p1) at (9.2,0.5) {$+$};
\setcounter{Edge}{1}
\setcounter{Vertex}{1} 
\bugdu {9.5}0{b1}
\bugdu {10}0{b2}
\bugdu {9.5}{1.5}{b3}
\bugdu {10.5}{0}{b4}
\bugdu {9.5}{3}{b5}
\draw (d1b3) edge[in=90,out=270] (u1b1);
\draw (d1b5) edge[in=90,out=270] (u1b3);
\node (p1) at (11.2,0.5) {$+$};
\setcounter{Edge}{1}
\setcounter{Vertex}{1} 
\bugdu {11.5}0{b1}
\bugdu {12}0{b2}
\bugdu {11.5}{1.5}{b3}
\bugdu {12.5}{0}{b4}
\bugdu {12}{1.5}{b5}
\draw (d1b3) edge[in=90,out=270] (u1b1);
\draw (d1b5) edge[in=90,out=270] (u1b2);
\node (p1) at (13.2,0.5) {$+$};
\setcounter{Edge}{1}
\setcounter{Vertex}{1} 
\bugdu {13.5}0{b1}
\bugdu {14}0{b2}
\bugdu {13.5}{1.5}{b3}
\bugdu {13.5}{3}{b4}
\bugdu {14}{1.5}{b5}
\draw (d1b3) edge[in=90,out=270] (u1b1);
\draw (d1b5) edge[in=90,out=270] (u1b2);
\draw (d1b4) edge[in=90,out=270] (u1b3);
\node (p1) at (14.7,0.5) {$+$};
\setcounter{Edge}{1}
\setcounter{Vertex}{1} 
\bugdu {15}0{b1}
\bugdu {15.5}0{b2}
\bugdu {15}{1.5}{b3}
\bugdu {15.5}{1.5}{b4}
\bugdu {15}{3}{b5}
\draw (d1b3) edge[in=90,out=270] (u1b1);
\draw (d1b5) edge[in=90,out=270] (u1b3);
\draw (d1b4) edge[in=90,out=270] (u1b2);

\end{tikzpicture}
\end{center}
\caption{An example of product in $\mathcal B^1_1$ \label{wprod}}
\end{figure}
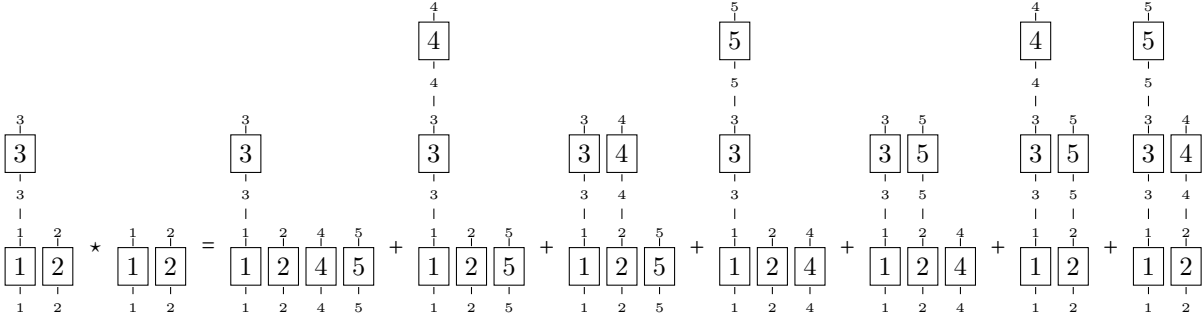 
\end{example}

\subsection{Bi-words symmetric functions\label{BWSym}}
In this section, we consider the Hopf algebra $\BWSym$ of set partitions into lists defined in \cite{BCLMA,ABCLM}. A set partition into lists is a set  of lists $\Pi=\{[i^1_1,\dots,i^1_{\ell_1}],\dots,[i^k_1,\dots,i^k_{\ell_k}]\}$ satisfying $k\geq 0$, $\ell_j\leq 1$ for each $1\leq j\leq k$, the integers $i^1_1,\dots,i^1_{\ell_1},\dots,i^k_1,\dots,i^k_{\ell_k}$ are distinct, and $\llbracket 1,n\rrbracket=\{i^j_t:1\leq j\leq k, 1\leq t\leq \ell_j\}$ for a certain $n\in\N$; we let $\Pi\vDash n$ denote this property. As for the set partitions we define $\Pi\uplus\Pi'=\Pi\cup\{[i_1+n,\dots,i_k+n]:[i_1,\dots,i_k]\in\Pi'\}$ for $\Pi\vDash n$ and we will say that $\Pi$ is \emph{indivisible} if $\Pi=\Pi'\uplus\Pi''$ for some set partitions in lists $\Pi', \Pi''$ implies either $\Pi'=\Pi$ or $\Pi''=\Pi$.

The underlying space of $\BWSym$ is freely generated by the set $\{\Phi^\Pi:\Pi\vDash n, n\geq 0\}$. Whence endowed with the product
\begin{equation}\label{bwsymproduct}
\Phi^\Pi\Phi^{\Pi'}=\Phi^{\Pi\uplus\Pi'}
\end{equation}
and the coproduct
\begin{equation}\label{bwsymcoproduct}
\Delta(\Phi^\Pi)=\sum_{\Pi=E\uplus F}\Phi^{\std(E)}\otimes \Phi^{\std(F)},
\end{equation}
with $\std(E)=\{[\phi(i_1),\dots,\phi(i_k)]: [i_1,\dots,i_k]\in e\}$  where $\phi$ is the unique increasing bijection from $\bigcup_{[\alpha_1,\dots,\alpha_p]\in E}\{\alpha_1,\dots,\alpha_p\}$ to $\llbracket 1,\sum_{[\alpha_1,\dots,\alpha_p]\in E}p\rrbracket$, $\BWSym$ is a Hopf algebra.

We need the following property which is straightforward from (\ref{bwsymproduct}).
\begin{proposition}\label{bwsymfree}
$\BWSym$ is freely generated as an algebra by the set
$
\{\Phi^\Pi:\Pi\ \mbox{ is indivisible}\}.
$
\end{proposition}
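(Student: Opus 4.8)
The plan is to recognize $\BWSym$ as the monoid algebra of the monoid $(\mathcal P,\uplus)$ of all set partitions into lists under shifted disjoint union, and to prove that this monoid is free on the (nonempty) indivisible elements; the statement then follows because the monoid algebra of a free monoid is the free associative algebra on its generators.

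First I would record that $(\mathcal P,\uplus)$ is a monoid: associativity of $\uplus$ is immediate from the definition, since in both $(\Pi\uplus\Pi')\uplus\Pi''$ and $\Pi\uplus(\Pi'\uplus\Pi'')$ the lists coming from $\Pi'$ are shifted by $n$ and those from $\Pi''$ by $n+n'$ (where $\Pi\vDash n$ and $\Pi'\vDash n'$), and the empty set partition $\emptyset\vDash 0$ is a two-sided unit. Since (\ref{bwsymproduct}) reads $\Phi^\Pi\Phi^{\Pi'}=\Phi^{\Pi\uplus\Pi'}$ and $\{\Phi^\Pi\}$ is a basis of $\BWSym$, the linear map $\Phi^\Pi\mapsto\Pi$ is an algebra isomorphism from $\BWSym$ onto $\mathbb C[\mathcal P]$.

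The combinatorial heart is the unique factorization of a set partition into lists into indivisible ones. For $\Pi\vDash n$ call $m$ with $0\le m\le n$ a \emph{cut point} of $\Pi$ if no list of $\Pi$ contains both an integer $\le m$ and an integer $>m$, and let $0=c_0<c_1<\cdots<c_r=n$ be all the cut points. Writing $\Pi^{(i)}$ for the standardization of the set of lists of $\Pi$ contained in $\llbracket c_{i-1}+1,c_i\rrbracket$, one has $\Pi=\Pi^{(1)}\uplus\cdots\uplus\Pi^{(r)}$ with each $\Pi^{(i)}$ indivisible, since $\Pi^{(i)}$ has no cut point strictly between its two endpoints. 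Conversely, if $\Pi=\Psi_1\uplus\cdots\uplus\Psi_s$ with every $\Psi_j$ nonempty and indivisible, then each partial sum $\#\Psi_1+\cdots+\#\Psi_j$ is a cut point of $\Pi$, while the indivisibility of the $\Psi_j$ forbids any further cut point; hence $s=r$, $c_j=\#\Psi_1+\cdots+\#\Psi_j$, and $\Psi_j=\Pi^{(j)}$. This is precisely the assertion that $(\mathcal P,\uplus)$ is the free monoid on the set $\mathcal I$ of nonempty indivisible set partitions into lists, in complete analogy with Proposition \ref{BMfree} for B-diagrams under juxtaposition.

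Finally I would conclude: the monoid isomorphism above lifts to an algebra isomorphism $\BWSym\cong\mathbb C[\mathcal P]\cong\mathbb C\langle\mathcal I\rangle$; equivalently, the factorization $\Phi^\Pi=\Phi^{\Pi^{(1)}}\cdots\Phi^{\Pi^{(r)}}$ shows the $\Phi^\Pi$ with $\Pi$ indivisible generate $\BWSym$, and their algebraic independence follows because distinct words $\Psi_1\cdots\Psi_s$ over $\mathcal I$ produce distinct basis elements $\Phi^{\Psi_1\uplus\cdots\uplus\Psi_s}$, hence linearly independent ones. I do not expect any serious obstacle — as the paper notes, the result is straightforward from (\ref{bwsymproduct}); the only step deserving a moment of care is the cut-point lemma yielding both existence and uniqueness of the decomposition into indivisibles, together with the routine check that $\Pi^{(i)}\vDash c_i-c_{i-1}$ and that reassembling the $\Pi^{(i)}$ via $\uplus$ recovers $\Pi$ exactly.
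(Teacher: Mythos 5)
Your proof is correct and is precisely the argument the paper leaves implicit when it declares the proposition ``straightforward from (\ref{bwsymproduct})'': identify $\BWSym$ with the monoid algebra of set partitions into lists under $\uplus$ and check, via the cut-point decomposition, that this monoid is free on the nonempty indivisible elements, in direct analogy with Proposition \ref{BMfree}. No gaps; your explicit handling of existence and uniqueness of the factorization supplies exactly the detail the paper omits.
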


Let $\mathcal B^2_1$ be the subspace of $\mathcal B$ generated by the set $\mathcal G^2_1$ of B-diagrams under the form $$(n,[2,\dots,2],\llbracket 1,2n\rrbracket,\{1,3,5,\dots,2n-1\},E).$$ Remarking that $\mathcal G^2_1$ is exactly the set of the B-diagrams whose each vertex has two outer non cut half edges, one inner non cut half edge and one inner cut half edge, the space $\mathcal B^2_1$ is stable for the product $\star$ and $\Delta$ sends $\mathcal B^2_1$   to $\mathcal B^2_1\otimes \mathcal B^2_1$. In other words, $\mathcal B^2_1$ is a sub Hopf algebra of $\mathcal B$.

Let us describe a graded one-to-one correspondence between the set partitions into lists and the B-diagrams of $\mathcal G^2_1$. 
First we associate to each permutation $\sigma\in \S_n$ a connected B-diagram $m_\sigma=(n,[2,\dots,2],\llbracket 1,2n\rrbracket,\{1,3,5,\dots,2n-1\},E_\sigma)$ where $(2i-1,2j-1)\in E_\sigma$ if and only if $i<j$ and $j=\min\{k:\alpha_i<\sigma^{-1}(k)<\sigma^{-1}(i)\}$ where $\alpha_i=\sup\{\sigma^{-1}(k):k<i,\sigma^{-1}(k)<\sigma^{-1}(i)\}$ and 
$(2i,2j-1)\in E_\sigma$ if and only if $i<j$ and $j=\min\{k:\sigma^{-1}(i)<\sigma^{-1}(k)<\beta_i\}$  where $\beta_i=\inf\{\sigma^{-1}(k):k<i,\sigma^{-1}(k)>\sigma^{-1}(i)\}$.
\begin{example}\rm 
Consider the permutation $\sigma=[5,2,4,1,3,7,6]$. We have $\alpha_1=\alpha_2=\alpha_5=-\infty$, $\alpha_3=4$, $\alpha_4=2$, $\alpha_6=\alpha_7=5$, $\beta_1=\beta_3=\beta_6=+\infty$, $\beta_2=\beta_4=4$, $\beta_5=2$, and $\beta_7=7$. Hence,
\[
m_\sigma=(7,[2,2,2,2,2,2,2],\llbracket 1,14\rrbracket,\{1,3,5,7,9,11,13\},\{(1,3),(2,5),(3,9),(4,7),(6,11),(11,13)\}).
\]
See Figure \ref{msigma} for a graphical representation.
\begin{figure}[h]
\begin{center}
\begin{tikzpicture}
\setcounter{Edge}{1}
\setcounter{Vertex}{1} 
\bugdxuu 40{b1}
\bugdxuu {2}{1.5}{b2}
\bugdxuu {6}{1.5}{b3}
\bugdxuu {3}{3}{b4}
\bugdxuu {1}{3}{b5}
\bugdxuu {8}{3}{b6}
\bugdxuu {7}{4.5}{b7}
\draw (d1b2) edge[in=90,out=270] (u1b1);
\draw (d1b3) edge[in=90,out=270] (u2b1);
\draw (d1b5) edge[in=90,out=270] (u1b2);
\draw (d1b4) edge[in=90,out=270] (u2b2);
\draw (d1b6) edge[in=90,out=270] (u2b3);
\draw (d1b7) edge[in=90,out=270] (u1b6);
\node (n1) at (1.2,-0.7) {$5$};
\node (n1) at (2.2,-0.7) {$2$};
\node (n1) at (6.2,-0.7) {$3$};
\node (n1) at (4.2,-0.7) {$1$};
\node (n1) at (3.2,-0.7) {$4$};
\node (n1) at (8.2,-0.7) {$6$};
\node (n1) at (7.2,-0.7) {$7$};
\draw (1.7,-1) -- (1.7,5.5);
\draw (2.7,-1) -- (2.7,5.5);
\draw (3.7,-1) -- (3.7,5.5);
\draw (5.7,-1) -- (5.7,5.5);
\draw (6.7,-1) -- (6.7,5.5);
\draw (7.7,-1) -- (7.7,5.5);
\end{tikzpicture}
\end{center}
\caption{The B-diagram $m_{[5,2,4,1,3,7,6]}$ \label{msigma}}
\end{figure}
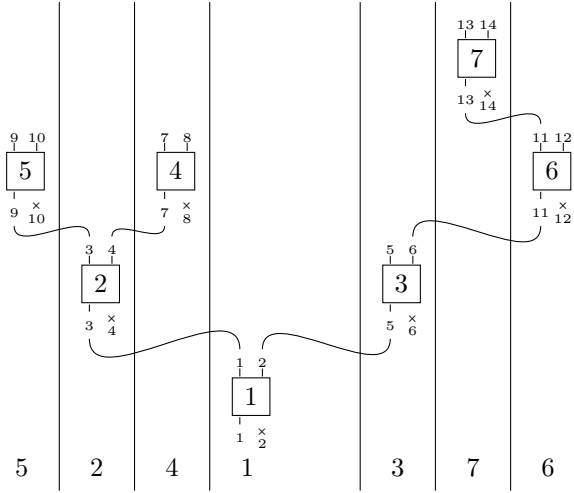 
\end{example}
 Noticing that any connected B-diagram $G$ in $\mathcal G^2_1$  satisfies that $h^\uparrow_f(G)=|G|+1$, we  construct $n+1$ different B-diagrams by adding one vertex to $G$. A quick reasoning by induction on the number of vertices shows that the set of the connected B-diagram in  $\mathcal G^2_1$ with $n$ vertices is exactly the set of the B-diagrams $m_\sigma$ for $\sigma\in\S_n$ and the correspondence $\sigma\longrightarrow m_\sigma$ is one to one.\\
 We extend the construction to the set partitions into lists as follows. To each integer sequence $I=[i_1,\dots,i_n]$ of $n$ distinct integers, we associate the permutation $\std(I)$ obtained by replacing each $j_k$ by $k$ in $I$ for each $k$ where $j_1<\dots<j_n$ and $\{j_1,\dots,j_n\}=\{i_1,\dots,i_n\}$. To each set partition into lists $\Pi=\{L_1,\dots,L_k\}$, we associate the unique B-diagram $m_\Pi$ in $\mathcal G^2_1$ with $k$ connected components obtained by replacing the integers $\ell$ in each $m_{\std(L_t)}$ by $j^t_\ell$ in $L_t$ where $L_t=[i^t_1,\dots,i^t_{n_t}]$, $j^t_1<\dots<j^t_{n_t}$ and $\{i_1^t,\dots,i_{n_t}^t\}=\{j^t_1<\dots<j^t_{n_t}\}$. Hence,  $\mathcal G^2_1=\{m_\Pi:\Pi\ \mbox{ is a set partition into lists}\}$ and the correspondence $\Pi\longrightarrow m_\Pi$ is one to one. Furthermore, by construction, $\Pi$ is indivisible if and only if $m_\Pi$ is indivisible.
 
 We deduce from the above discussion that the algebra $\mathcal B^2_1$ has the same graded dimension than $\BWSym$ and that it is freely generated as a an algebra by the set $$\{m_\Pi:\Pi\ \mbox{ is an indivisible set partition into lists}\}.$$

Also, notice  that (\ref{starHW}) allows us to interpret the product $m_\Pi\star m_{\Pi'}$ in terms of set partitions in lists. Let $\Pi=[L_1,\dots,L_k]\vDash n$ and $\Pi'=[L'_1,\dots,L'_{k'}]\vDash n'$ be two set partitions into lists. One has
\begin{equation}\label{prodmPi}m_\Pi\star m_{\Pi'}=m_{\Pi\uplus\Pi'}+\sum m_{\Pi''}\end{equation}
where the sum is over the set partitions in list $\Pi''=[L''_1,\dots,L''_{k''}]\vDash n+n'$ such that each $L''_{i}$ is 
\begin{enumerate}
\item either a list $L_j$,
\item or a list $[j_1+n,\dots,j_\ell+n]$ with $[j_1,\dots,j_\ell]\in\Pi'$,
\item or a list $[i_1,\dots,i_{p_1},j^1_1+n,\dots,j^1_{\ell_1}+n,i_{p_1}+1,\dots,i_{p_2},
j^2_1+n,\dots,j^2_{\ell_2}+n,\dots,i_{p_{t-1}}+1,\dots,i_{p_t},j^t_1+n,\dots,j^1_{\ell_t}+n,i_{p_t}+1,\dots,i_{p_{t+1}}]$ where $t>1$, $p_1<p_2<\dots<p_{t+1}$, $[i_1,\dots,i_{p_{t+1}}]\in \Pi$, and for each $1\leq s\leq t$, $[j^s_1,\dots,j^s_{\ell_s}]\in \Pi''$.
\end{enumerate}
\begin{example}\label{exempleB2}\rm
For instance, compare
\[ \begin{array}{rcl}m_{\{[3,1],[2]\}} m_{\{[1,2]\}} 
&=& m_{\{[3,1],[2],[4,5]\}} + m_{\{[3,1],[2,4,5]\}} + m_{\{[3,1],[4,5,2]\}} \\&& 
+ m_{\{[3,1,4,5],[2]\}} + m_{\{[3,4,5,1],[2]\}} + m_{\{[4,5,3,1],[2]\}}.
\end{array}
\]
to the product pictured in Figure \ref{bwprod}.
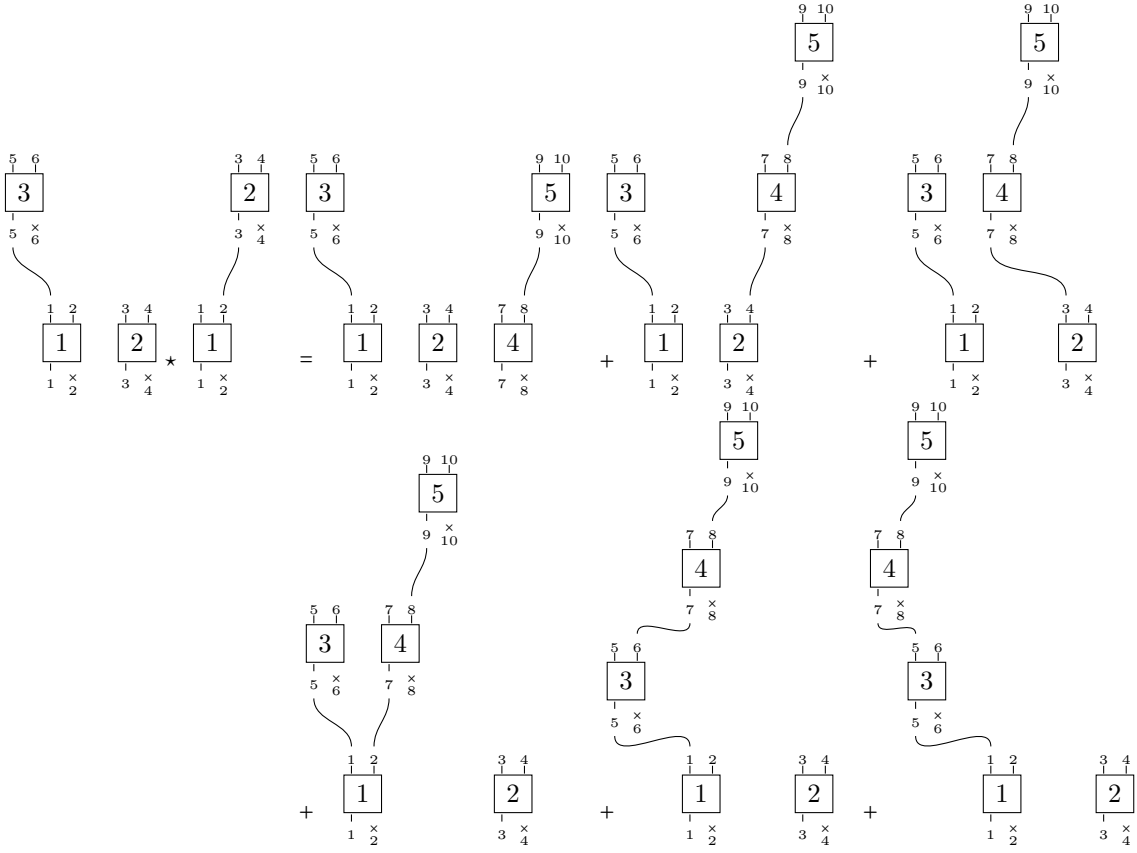
\begin{figure}[h]
\begin{center}
\begin{tikzpicture}
\setcounter{Edge}{1}
\setcounter{Vertex}{1} 
\bugdxuu {1.5}{1}{b1}
\bugdxuu {2.5}{1}{b2}
\bugdxuu {1}{3}{b3}
\draw (d1b3) edge[in=90,out=270] (u1b1);
\node (star) at (3.2,1) {$\star$};
\setcounter{Edge}{1}
\setcounter{Vertex}{1} 
\bugdxuu {3.5}{1}{b1}
\bugdxuu {4}{3}{b2}
\draw (d1b2) edge[in=90,out=270] (u2b1);
\node (star) at (5,1) {$=$};
\setcounter{Edge}{1}
\setcounter{Vertex}{1} 
\bugdxuu {5.5}{1}{b1}
\bugdxuu {6.5}{1}{b2}
\bugdxuu {5}{3}{b3}
\bugdxuu {7.5}{1}{b4}
\bugdxuu {8}{3}{b5}
\draw (d1b3) edge[in=90,out=270] (u1b1);
\draw (d1b5) edge[in=90,out=270] (u2b4);
\node (p1) at (9,1) {$+$};
\setcounter{Edge}{1}
\setcounter{Vertex}{1} 
\bugdxuu {9.5}{1}{b1}
\bugdxuu {10.5}{1}{b2}
\bugdxuu {9}{3}{b3}
\bugdxuu {11}{3}{b4}
\bugdxuu {11.5}{5}{b5}
\draw (d1b3) edge[in=90,out=270] (u1b1);
\draw (d1b4) edge[in=90,out=270] (u2b2);
\draw (d1b5) edge[in=90,out=270] (u2b4);
\node (p1) at (12.5,1) {$+$};
\setcounter{Edge}{1}
\setcounter{Vertex}{1} 
\bugdxuu {13.5}{1}{b1}
\bugdxuu {15}{1}{b2}
\bugdxuu {13}{3}{b3}
\bugdxuu {14}{3}{b4}
\bugdxuu {14.5}{5}{b5}
\draw (d1b3) edge[in=90,out=270] (u1b1);
\draw (d1b4) edge[in=90,out=270] (u1b2);
\draw (d1b5) edge[in=90,out=270] (u2b4);

\node(p1) at (5,-5) {$+$};
\setcounter{Edge}{1}
\setcounter{Vertex}{1} 
\bugdxuu {5.5}{-5}{b1}
\bugdxuu {7.5}{-5}{b2}
\bugdxuu {5}{-3}{b3}
\bugdxuu {6}{-3}{b4}
\bugdxuu {6.5}{-1}{b5}
\draw (d1b3) edge[in=90,out=270] (u1b1);
\draw (d1b4) edge[in=90,out=270] (u2b1);
\draw (d1b5) edge[in=90,out=270] (u2b4);

\node(p1) at (9,-5) {$+$};
\setcounter{Edge}{1}
\setcounter{Vertex}{1} 
\bugdxuu {10}{-5}{b1}
\bugdxuu {11.5}{-5}{b2}
\bugdxuu {9}{-3.5}{b3}
\bugdxuu {10}{-2}{b4}
\bugdxuu {10.5}{-0.3}{b5}
\draw (d1b3) edge[in=90,out=270] (u1b1);
\draw (d1b4) edge[in=90,out=270] (u2b3);
\draw (d1b5) edge[in=90,out=270] (u2b4);
\node (p1) at (12.5,-5) {$+$};
\setcounter{Edge}{1}
\setcounter{Vertex}{1} 
\bugdxuu {14}{-5}{b1}
\bugdxuu {15.5}{-5}{b2}
\bugdxuu {13}{-3.5}{b3}
\bugdxuu {12.5}{-2}{b4}
\bugdxuu {13}{-0.3}{b5}
\draw (d1b3) edge[in=90,out=270] (u1b1);
\draw (d1b4) edge[in=90,out=270] (u1b3);
\draw (d1b5) edge[in=90,out=270] (u2b4);
\end{tikzpicture}
\end{center}
\caption{An example of product in $\mathcal B^2_1$ \label{bwprod}}
\end{figure} 
\end{example}
We deduce the following result.

\begin{theorem}
The Hopf algebras $\BWSym$ and $\mathcal B^2_1$ are isomorphic.
\end{theorem}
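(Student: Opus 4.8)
The approach I would take is \emph{not} to exhibit an explicit isomorphism directly (as was done for $\WSym$), since the two products do not match on the obvious bases: by $(\ref{prodmPi})$ the product $m_\Pi\star m_{\Pi'}$ equals $m_{\Pi\uplus\Pi'}$ plus correction terms with strictly fewer connected components, whereas in $\BWSym$ one simply has $\Phi^\Pi\Phi^{\Pi'}=\Phi^{\Pi\uplus\Pi'}$. Instead I would use the structure theory already invoked for $\mathcal B$: show that $\mathcal B^2_1$ and $\BWSym$ are both graded, connected, cocommutative Hopf algebras, free as associative algebras, with free generating sets in degree-preserving bijection, and then let the Cartier-Quillen-Milnor-Moore theorem do the rest.

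In order, the steps are as follows. First, regrade $\mathcal B^2_1$ by the number of vertices of a B-diagram (that is, by $\frac{1}{2}\omega$); with this grading it is a connected graded Hopf subalgebra of $\mathcal B$ with finite-dimensional homogeneous components, it is cocommutative because $\mathcal B$ is, and, as recalled above, it is free as an associative algebra on $\{m_\Pi:\Pi\text{ an indivisible set partition into lists}\}$. Symmetrically, $\BWSym$ is graded by $n$ (for $\Phi^\Pi$ with $\Pi\vDash n$), connected, with finite-dimensional homogeneous components; it is cocommutative, because transposing the two tensor factors in $(\ref{bwsymcoproduct})$ merely reindexes the sum over the ordered pairs $(E,F)$ of complementary subsets of the set of lists of $\Pi$; and it is free as an associative algebra on $\{\Phi^\Pi:\Pi\text{ indivisible}\}$ by Proposition $\ref{bwsymfree}$. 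Second, apply the Eulerian-idempotent argument used above for $\mathcal B$, verbatim, to each of the two algebras: for a connected graded Hopf algebra that is free as an algebra, $\pi_1=\log_*(Id)$ carries each algebra generator $g$ to $g$ plus a combination of longer products of generators, so, the generators being algebraically independent, $\mathrm{Prim}(\mathcal B^2_1)$ contains --- and, by the dimension count used above for $\mathcal B$ (the graded dimensions of the primitives of a free associative algebra are those of the free Lie algebra on its generators), equals --- the free Lie algebra on $\{\pi_1(m_\Pi):\Pi\text{ indivisible}\}$, and likewise $\mathrm{Prim}(\BWSym)$ is the free Lie algebra on $\{\pi_1(\Phi^\Pi):\Pi\text{ indivisible}\}$. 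Third, since $\Pi\mapsto m_\Pi$ restricts to a bijection from the indivisible set partitions into lists of $\llbracket 1,n\rrbracket$ onto the indivisible B-diagrams of $\mathcal G^2_1$ with $n$ vertices, these two families of free Lie generators are in degree-preserving bijection, hence $\mathrm{Prim}(\mathcal B^2_1)\cong\mathrm{Prim}(\BWSym)$ as graded Lie algebras; applying the Cartier-Quillen-Milnor-Moore theorem to each side gives
\[
\mathcal B^2_1\;\cong\;U\!\left(\mathrm{Prim}(\mathcal B^2_1)\right)\;\cong\;U\!\left(\mathrm{Prim}(\BWSym)\right)\;\cong\;\BWSym
\]
as graded Hopf algebras.

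The point requiring attention is precisely the packaging rather than any single computation: one must confirm that $\BWSym$ is cocommutative, that $\mathcal B^2_1$ is regraded by vertex number so that its Hilbert series literally agrees with that of $\BWSym$ (under the $\omega$-grading inherited from $\mathcal B$ it is off by the substitution $t\mapsto t^2$), and that the Eulerian idempotent of the Hopf subalgebra $\mathcal B^2_1$ is the restriction of that of $\mathcal B$ (which is immediate, since $\Delta$ restricts to $\mathcal B^2_1$). I do not expect any of these to be difficult. For completeness I note that one could instead try the explicit map sending $\Phi^{\Pi_1\uplus\cdots\uplus\Pi_r}$ to $m_{\Pi_1}\star\cdots\star m_{\Pi_r}$ --- the multiplicative basis of $\mathcal B^2_1$ indexed by the indivisible decomposition, which matches $(\ref{bwsymproduct})$ automatically by associativity --- but then proving that it also intertwines the coproducts requires expanding $(\ref{prodmPi})$ inside $(\ref{DeltaDef3})$ and undoing a triangular change of basis, which is appreciably more laborious than the route above; so I would not take it.
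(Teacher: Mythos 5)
Your proposal is correct, but it takes a genuinely different route from the paper. The paper does precisely what you decline to do: it defines the explicit map $\eta$ sending $\Phi^\Pi$ to $m_\Pi$ for each indivisible $\Pi$ and extends it multiplicatively; this is an algebra isomorphism by Proposition \ref{bwsymfree} together with the freeness of $\mathcal B^2_1$ on the $m_\Pi$, and the coproducts are then intertwined by checking the identity $(\eta\otimes\eta)\circ\Delta=\Delta\circ\eta$ \emph{only on the free generators} $\Phi^\Pi$ with $\Pi$ indivisible and propagating it multiplicatively, since $\Delta$ is an algebra morphism on both sides and $\eta\otimes\eta$ is an algebra morphism. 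So the ``appreciably more laborious'' expansion of (\ref{prodmPi}) inside (\ref{DeltaDef3}) that you fear is never required: the whole verification reduces to comparing $\Delta(\Phi^\Pi)$ with $\Delta(m_\Pi)$ for indivisible $\Pi$, where the lists of $\Pi$ correspond to the connected components of $m_\Pi$. (The paper's write-up of that generator check is itself slightly imprecise --- it asserts $\Delta(m_\Pi)=m_\Pi\otimes\epsilon+\epsilon\otimes m_\Pi$ for every indivisible $\Pi$, which holds only when $\Pi$ is a single list; for an indivisible $\Pi$ with several interleaved lists both coproducts acquire matching cross terms indexed by subsets of lists, respectively of connected components.) Your Cartier--Quillen--Milnor--Moore route is sound: both algebras are graded (after regrading $\mathcal B^2_1$ by vertex number), connected, cocommutative, with finite-dimensional components, and free as algebras on generating sets in degree-preserving bijection, so the Eulerian-idempotent argument already used for $\mathcal B$ identifies each Lie algebra of primitives as free on those generators, and the isomorphism follows. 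What your approach buys is the complete absence of any coproduct computation; what it gives up is the explicit combinatorial isomorphism $\Phi^\Pi\mapsto m_\Pi$, which the paper exploits afterwards (for instance in the concluding section, to transport set partitions into lists to set partitions through $\mathcal B^2_1\longrightarrow\mathcal B^1_1$).
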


\begin{proof}
From the above discussion and Proposition  \ref{bwsymfree}, the algebras $\BWSym$ and $\mathcal B^2_1$ are isomorphic. An explicit isomorphism $\eta$ sends  $\Phi^\Pi$ to $m_\Pi$ for each nonsplitable set partition into lists $\Pi$. It remains to prove that it is  a morphism of cogebras. Remarking that the lists of a set partition into lists $\Pi$ correspond to the connected components of $m_\Pi$, we show that equality (\ref{DeltaDef2}) implies  \begin{equation}\label{eqBWSymB21}(\eta\otimes\eta)(\Delta(\Phi^\Pi))=(\eta\otimes\eta)(\Phi^\Pi\otimes \epsilon+\epsilon\otimes \Phi^\Pi)=
m_\Pi\otimes \epsilon+\epsilon\otimes m_\Pi
=\Delta(m_\Pi)\end{equation} for each nonsplitable partition $\Pi$. So since $\eta\otimes\eta:\BWSym\otimes \BWSym\longrightarrow \mathcal B^2_1\otimes \mathcal B^2_1$ is a morphism of algebras, the equality $(\eta\otimes\eta)(\Delta(\Phi^{\Pi_1}\cdots\Phi^{\Pi_k}))= \Delta(m_{\Pi_1}\star\cdots\star m_{\Pi_k})$ holds for any $k$-tuples $(\Pi_1,\dots,\Pi_k)$ of  nonsplitable set partitions into lists. This proves that $\eta$ is a morphism of bigebra and implies our statement.
\end{proof}

\section{Conclusion}
In this paper, we have described a combinatorial Hopf algebra which gives a diagrammatic representation of the calculations involved in the normal boson ordering.
This algebra is rather closed to the one proposed by Blasiak \emph{et al.}, but there are many differences which can be exploited to understand soundly the combinatorial aspects of these computations. First, the underlying objects are slightly different. Our objects are graphs with labeled vertices whilst those of Blasiak \emph{et al.} are unlabeled; furthermore our graph have vertices which have the same number of inner edges and outer edges and each edge is either valid or stump. So our algebra is bigger.   
The algebra of Blasiak \emph{et al.} specializes to the enveloping algebra of the Heisenberg Lie algebra which is  described in terms of quotient as follows: $\mathcal U\left(\mathcal L_{\mathcal H}\right)\sim \mathbb C\langle \aa^\dag,\aa,\ee'\rangle/_{\mathcal J'}$ where $\mathcal J'$ is the ideal generated by the three polynomials $[\aa^\dag,\aa]-\ee'$, $[\aa^\dag,\ee']$, and $[\aa,\ee']$. The role of the letter $\ee'$ consists in collecting the statistic of the number of edges denoted by $|\Gamma_0|$ in \cite{BPDSHP} and by $\tau(G)$ in our paper. Consider the algebra $\mathcal H'$ obtained by adding a central element $\ee$ to $\mathcal U\left(\mathcal L_{\mathcal H}\right)$. This algebra allows us to take into account both the statistics $h_c(G)$ and $\tau(G)$. Indeed, it suffices to consider the morphism of algebra $\mathfrak p'_{\mathcal H}$ sending each $\raa_{\rangle\left(i_1\atop j_1\right)\cdots\left(i_k\atop j_k\right)}$ to $\aa^\dag\ee'^{k-1}$, each element of $\mathcal A_{\times\langle}$ to $\aa$, and each element of $\mathcal A_{\langle\ \rangle}$ to $\ee$.
Remarking that $\sum_{p\in\mathrm{Paths}(G)}\left(\ell(\mathrm{seq}(p))-1\right)=\tau(G)$ where $\ell(s)$ denotes the length of the sequence $s$, we obtain
\begin{equation}
\mathfrak p'_{\mathcal H}(\mathtt w(G))=\left(\aa^\dag\right)^{h_f^\downarrow(G)}\aa^{h_f^\uparrow(G)}\ee^{h_c(G)}\ee'^{\tau(G)}.
\end{equation}
Now, the multiplication formula  reads
\begin{equation}\label{multform2}
\left(\aa^\dag\right)^m\aa^n\ee^q\ee'^{v}.\left(\aa^\dag\right)^r\aa^s\ee^t\ee'^{w}=\sum_{i=0}^{\min\{n,r\}}i!\binom{q}i\binom ri \left(\aa^\dag\right)^{m+r-i}\aa^{n+s-i}\ee^{q+t}\ee'^{v+w+i}.
\end{equation}

Section \ref{subalgebras} gave two examples of subalgebras which are related to combinatorial objects. These construction can be generalized to a family of Hopf combinatorial algebras generated by colored partitions \cite{ABCLM}. 
Some combinatorial properties of these objects can be deduced from simple manipulations. For instance, the number of set partitions into lists of $n$ is equal to the number of set partitions of $2n$ such that each part contains at most one even number and, in that case, this number is the minimum of the part. This comes from the morphism $\mathcal B^2_1\longrightarrow \mathcal B^1_1$ sending the element
 \begin{center}\begin{tikzpicture}
\setcounter{Edge}{1}
\setcounter{Vertex}{1}
\bugdxuu 00{b1}
\end{tikzpicture}
\end{center}
 to
 \begin{center}
 \begin{tikzpicture}
\setcounter{Edge}{1}
\setcounter{Vertex}{1}
\bugdu {1.5}0{b1}
\bugdu {2}0{b2}
\end{tikzpicture}
\end{center}
together with the interpretations in terms of set partitions and set partitions into lists described in section \ref{subalgebras}. For instance, the explicit isomorphism sends $\{[4,5,3,1],[2]\}$ to $\{\{1,5,7\},\{2\},\{3\},\{4\}, \{6\},\{8,9\},\{10\}\}$. This comes from the correspondence
\begin{center}
 \begin{tikzpicture}
\setcounter{Edge}{1}
\setcounter{Vertex}{1} 
\bugdxuu {1}{-5}{b1}
\bugdxuu {2.5}{-5}{b2}
\bugdxuu {0}{-3.5}{b3}
\bugdxuu {-0.5}{-2}{b4}
\bugdxuu {0}{-0.3}{b5}
\draw (d1b3) edge[in=90,out=270] (u1b1);
\draw (d1b4) edge[in=90,out=270] (u1b3);
\draw (d1b5) edge[in=90,out=270] (u2b4);
\node (p1) at (4,-2.5) {\Large$\sim$};
\setcounter{Edge}{1}
\setcounter{Vertex}{1} 

\bugdu {5}{-5}{b1}
\bugdu {6}{-5}{b2}
\bugdu {7}{-5}{b3}
\bugdu {8}{-5}{b4}
\bugdu {5}{-3.5}{b5}
\bugdu {9}{-5}{b6}
\bugdu {5}{-2}{b7}
\bugdu {10}{-5}{b8}
\bugdu {10}{-3.5}{b9}
\bugdu {11}{-5}{b10}
\draw (d1b5) edge[in=90,out=270] (u1b1);
\draw (d1b7) edge[in=90,out=270] (u1b5);
\draw (d1b9) edge[in=90,out=270] (u1b8);

\end{tikzpicture}
\end{center}
Applying this strategy to various subalgebras, we find interpretations of some generalizations of Bell polynomials, like $r$-Bell \cite{mezo} or $(r\sb 1,\dots,r\sb p)$-Bell \cite{mih12}, in terms of $B$-diagrams. 
All these investigations are relegated to a forthcoming paper.

\end{document}